\theoremstyle{theorem}
\newtheorem{thm}{Theorem}[section]
\newtheorem{lm}[thm]{Lemma}
\newtheorem{prop}[thm]{Proposition}
\newtheorem{cor}[thm]{Corollary}
\newtheorem*{thmintro}{Theorem}
\newtheorem*{propintro}{Proposition}
\theoremstyle{definition}
\newtheorem{dfn}[thm]{Definition}
\def\dpar#1#2{\frac{\partial #1}{\partial #2}}
\def\Hil{\mathcal{H}\xspace}
\def\N{\mathbb{N}\xspace}
\def\Z{\mathbb{Z}\xspace}
\def\R{\mathbb{R}\xspace}
\def\C{\mathbb{C}\xspace}
\def\classe#1#2{\mathcal{C}^{#1}#2}
\def\Barg{\mathcal{B}\xspace}
\begin{document}

\title{Singular Bohr-Sommerfeld conditions for 1D Toeplitz operators: elliptic case}

\author{Yohann Le Floch\footnote{Universit\'e de Rennes 1, IRMAR, UMR 6625,
Campus de Beaulieu, b\^atiments 22 et 23, 263 avenue du G\'en\'eral Leclerc,
CS 74205, 35042  RENNES C\'edex, France; email: yohann.lefloch@univ-rennes1.fr}
}

\maketitle

\begin{abstract} In this article, we state the Bohr-Sommerfeld conditions around a global minimum of the principal symbol of a self-adjoint semiclassical Toeplitz operator on a compact connected K\"{a}hler surface, using an argument of normal form which is obtained thanks to Fourier integral operators. These conditions give an asymptotic expansion of the eigenvalues of the operator in a neighbourhood of fixed size of the singularity. We also recover the usual Bohr-Sommerfeld conditions away from the critical point. We end by investigating an example on the two-dimensional torus.
\end{abstract}

\newpage

\section{Introduction}

Let $M$ be a compact, connected K\"{a}hler manifold of complex dimension $1$, with fundamental 2-form $\omega$. Assume $M$ is endowed with a prequantum line bundle $L$. Let $K$ be another holomorphic line bundle and define the quantum Hilbert space $\Hil_{k}$ as the space of holomorphic sections of $L^k \otimes K$, for every positive integer $k$. The operators acting on $\Hil_{k}$ that we consider are Berezin-Toeplitz operators (\cite{BouGui,BorPauUri2,Cha1,MaMa}, and many others). The semiclassical parameter is $k$, and the semiclassical limit is $k \rightarrow +\infty$. Formally, $k$ is the inverse of Planck's constant $\hbar$.

Our aim is to understand the spectrum of a given self-adjoint Toeplitz operator, in the semiclassical limit. In the setting of ($\hbar$-)pseudodifferential operators, the similar study was done by Colin de Verdi\`ere in \cite{CdV5}. In his article \cite{Cha2}, Charles obtained the description of the intersection of the spectrum of a self-adjoint Toeplitz operator with an interval of regular values of its principal symbol, in the semiclassical limit: the eigenvalues are selected by an integrality condition for some geometric quantities (actions) associated to the symbol of the operator (these are the Bohr-Sommerfeld conditions).

In this article, we extend these conditions around a global minimum of the principal symbol; since we work with only one degree of liberty, we expect to have a very precise description of the eigenvalues near the critical point. 

\subsection{Main theorem}
Let $A_k$ be a self-adjoint Toeplitz operator on $M$; its normalized symbol $a_{0} + \hbar a_{1} + \ldots$ is real-valued. Assume that its principal symbol $a_0$ admits a global minimum at $m_0 \in M$, with $a_0(m_0) = 0$. Denote by $\lambda_{k}^{(1)} \leq \lambda_{k}^{(2)} \leq \ldots \leq \lambda_{k}^{(j)} \leq \ldots$ the eigenvalues of $A_{k}$. Our main result is the following theorem.
\begin{thmintro}[Theorem \ref{thm:BSprec}]
There exist $E_0 > 0$, a sequence $g(.,k)$ of functions of $\classe{\infty}{(\R,\R)}$ which admits an asymptotic expansion of the form $g(.,k) = \sum_{\ell \geq 0}k^{-\ell} g_{\ell}$ in the $\classe{\infty}$ topology, and a positive integer $k_{0}$ such that for every integer $N \geq 1$ and for every $E \leq E^0$, there exists a constant $C_{N} > 0$ such that for $k \geq k_{0}$:
\begin{equation*} \left( \lambda_{k}^{(j)} \leq E \ \text{or} \ E_{k}^{(j)} \leq E \right) \Rightarrow \left| \lambda_{k}^{(j)} - E_{k}^{(j)} \right| \leq C_{N} k^{-N} \end{equation*}
where
\begin{equation*} E_k^{(j)} = g\left(k^{-1} \left(j+\frac{1}{2} \right),k \right), \quad j \in \N. \end{equation*} 
\end{thmintro}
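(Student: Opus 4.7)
My plan is to conjugate $A_k$, microlocally near $m_0$, to a function of the Toeplitz quantization of the harmonic oscillator on a Bargmann model; the spectrum of this model is explicit, and the function produced by the conjugation will furnish the expansion $g$. Eigenfunctions of $A_k$ with eigenvalue below $E_0$ should concentrate near $m_0$, which will transfer the spectral information from the model back to $A_k$ itself.

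First I would produce a classical normal form. Since the title specifies the elliptic case, $m_0$ is a non-degenerate global minimum of $a_0$, and a Morse-Darboux lemma gives symplectic coordinates $(x,\xi)$ centred at $m_0$ in which $a_0 = \alpha(x^2+\xi^2) + O(3)$ for some $\alpha > 0$. A Birkhoff iterative procedure on formal power series then produces a symplectic change of variables putting $a_0$ in the form $h(x^2+\xi^2)$ modulo flat terms; Borel summation yields a genuine smooth realisation on a fixed neighbourhood of $m_0$.

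Next I would lift this to the quantum level using the Toeplitz Fourier integral operator calculus of Charles. A unitary FIO $U_k$ from $\Hil_k$ onto the Bargmann model, quantising the symplectic transformation above, together with successive unitary corrections, would kill order by order in $k^{-1}$ the components of the full symbol of $U_k A_k U_k^*$ that depend on the angle conjugate to the action $I := x^2+\xi^2$. At each step the cohomological equation is solvable because averaging along the Hamiltonian flow of $I$ produces a function of $I$ alone. The outcome is a symbol $f(\cdot,k) \sim \sum_{\ell \geq 0} k^{-\ell} f_\ell$ such that, microlocally near $m_0$,
\begin{equation*} U_k A_k U_k^* = f(N_k,k) + O(k^{-\infty}), \end{equation*}
where $N_k$ denotes the Toeplitz quantisation of $I$ on the Bargmann model. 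Its eigenvalues are $k^{-1}(j+\tfrac{1}{2})$ for $j \in \N$, the $\tfrac{1}{2}$ shift arising from the subprincipal contribution in the Toeplitz quantisation of $x^2+\xi^2$; hence the spectrum of $f(N_k,k)$ is $\{g(k^{-1}(j+\tfrac{1}{2}),k)\}_{j\in\N}$ with $g_\ell := f_\ell$.

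Finally I would combine this with microlocal confinement: for $E_0$ small enough and $\lambda_k^{(j)} \leq E_0$, standard semiclassical elliptic regularity for Toeplitz operators gives $\|\chi \psi_k\| = O(k^{-\infty})$ for any smooth cutoff $\chi$ vanishing near $m_0$, because $a_0 - \lambda_k^{(j)}$ is bounded below outside any fixed small neighbourhood of $m_0$. A bracketing argument applied both to $A_k$ and to the model then matches $\lambda_k^{(j)}$ with some $E_k^{(j')}$ up to $O(k^{-\infty})$, and counting levels forces $j' = j$. The main obstacle will be the quantum normal form step: constructing the FIO $U_k$ and the symbol $f$ in a genuine $C^\infty$ sense on a fixed-size neighbourhood of $m_0$ (so that $E_0$ may be chosen independently of $k$), and arranging the iterative unitary conjugations to compose into a single unitary modulo $O(k^{-\infty})$ without loss in the symbol estimates.
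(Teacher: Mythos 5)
Your proposal follows essentially the same route as the paper: classical normal form around the elliptic minimum, a quantum normal form via Toeplitz Fourier integral operators with order-by-order unitary corrections solving cohomological equations by flow averaging, and then spectral transfer using concentration of low-lying eigenfunctions near $m_0$. Two minor differences worth noting: the paper obtains the exact classical normal form $a_0\circ\chi^{-1}=g_0\circ q_0$ from the symplectic Morse (isochore) lemma rather than by Birkhoff iteration plus Borel summation, and it conjugates $f(A_k,k)$ to $Q_k$ (the inverse of your formulation, so that $g=f^{-1}$ must then be shown to admit a smooth asymptotic expansion, Lemma \ref{lm:inversemicro}); the final spectral comparison is made precise not by a bracketing/counting argument but by a quantitative min-max lemma (Lemma \ref{lm:BNF}, due to Sandoval and Laurent) applied in both directions, which compares ordered eigenvalues directly and so dispenses with any level-counting step.
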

This allows to compute asymptotic expansions to all order for eigenvalues of $A_{k}$ lower than $E^0$, except that so far, we do not know who are the $g_{\ell}$, $\ell \geq 0$, or how to compute them. In fact, $g(.,k)$ is constructed as the local inverse of a sequence $f(.,k)$ which also admits an asymptotic expansion $f(.,k) = \sum_{\ell \geq 0} k^{-\ell} f_{\ell}$ in the $\classe{\infty}$ topology, and the first terms $f_{0}, f_{1}$ are related to geometric quantities (actions) associated to $A_{k}$.

\subsection{Link with the usual Bohr-Sommerfeld conditions} \label{sect:actions}

Let $I$ be a set of regular values of the principal symbol $a_{0}$; for every $E$ in $I$, the level set $\Gamma_{E} := a_{0}^{-1}(E)$ is diffeomorphic to $S^1$. Fix an orientation on $\Gamma_{E}$ depending continuously on $E$. Define the \textit{principal action} $c_{0} \in \classe{\infty}(I)$ in such a way that the parallel transport in $L$ along $\Gamma_{E}$ is the multiplication by $\exp(i c_{0}(E))$. Of course, $c_{0}(E)$ is defined up to an integer multiple of $2 \pi$, but we can always choose a determination of $c_{0}$ that is smooth on $I$.

Let $(\delta,\varphi)$ be a \textit{half-form bundle}, that is a line bundle $\delta \rightarrow M$ together with an isomorphism of line bundles $\varphi:\delta^2 \rightarrow \Lambda^{2,0} T^*M$. 
It is known that for any connected compact K\"ahler manifold of complex dimension 1, such a couple exists. Introduce the hermitian holomorphic line bundle $L_{1}$ such that $K = L_{1} \otimes \delta$. For $E$ in $I$, define the \textit{subprincipal form} $\kappa_{E}$ as the 1-form on $\Gamma_E$ such that 
\begin{equation*}  \kappa_{E}(X_{a_{0}}) = - a_{1} \end{equation*}
where $X_{a_{0}}$ stands for the Hamiltonian vector field associated to $a_{0}$. Denote by $j_{E}$ the embedding $\Gamma_{E} \rightarrow M $, and introduce the connection $\nabla^E$ on $j_{E}^*L_{1} \rightarrow \Gamma_{E}$ defined by
\begin{equation*} \nabla^E = \nabla^{j_{E}^*L_{1}} + \frac{1}{i} \kappa_{E}, \end{equation*}
with $\nabla^{j_{E}^*L_{1}}$ the connection induced by the Chern connection of $L_1$ on $j_{E}^*L_{1}$. Define the \textit{subprincipal action} $c_{1} \in \classe{\infty}{(I)}$ like the principal action, replacing $L$ by $L_1$ endowed with this connection.

Finally, define an index $\epsilon$ from the half-form bundle $\delta$ as follows: the map
\begin{equation*} \varphi_{E}:  \delta_{E}^2  \rightarrow  T^*\Gamma_{E} \otimes \C, \quad u  \mapsto  j_{E}^*\varphi(u) \end{equation*}
is an isomorphism of line bundles. The set
\begin{equation*} \left\{ u \in \delta_{E}; \varphi_{E}(u^{\otimes 2}) > 0 \right\} \end{equation*}
has one or two connected components. In the first case, we set $\epsilon_{E} = 1$, and in the second case $\epsilon_{E} = 0$. In fact, $\epsilon_{E}$ is a constant $\epsilon_{E} = \epsilon$ for $E$ in $I$.

If we define carefully $c_{0}$ and $c_{1}$, the following result holds.
\begin{propintro}[Proposition \ref{prop:f0f1}]
Set $I = ]0,E^0[$. Then 
\begin{equation} f_{0} = \frac{1}{2\pi} c_{0}, \quad  f_{1} = \frac{1}{2\pi} c_{1} \end{equation}
on $I$.
\end{propintro}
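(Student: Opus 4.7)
\emph{Proof plan.} The idea is that on $I = ]0, E^0[$ the level sets $\Gamma_E$ are regular (the only critical value of $a_0$ below $E^0$ being $0$, attained at $m_0$), so the classical Bohr-Sommerfeld conditions of Charles \cite{Cha2} are available and describe the very same eigenvalues as those predicted by Theorem \ref{thm:BSprec}. A uniqueness argument then forces the two implicit parameterizations to agree, and the identifications $f_0 = c_0/(2\pi)$ and $f_1 = c_1/(2\pi)$ will follow.

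First, on any compact subinterval of $I$ the plan is to invoke Charles's Bohr-Sommerfeld theorem: up to errors of order $k^{-\infty}$, the eigenvalues of $A_k$ in this subinterval are the solutions of $F(E,k) = k^{-1}(j + 1/2)$ for $j \in \N$, where $F(\cdot,k) = \sum_{\ell \geq 0} k^{-\ell} F_\ell$ is smooth on $I$, with $F_0 = \frac{1}{2\pi} c_0$ and $F_1 = \frac{1}{2\pi} c_1$ for the determinations of $c_0, c_1$ fixed in Section \ref{sect:actions}, the Maslov-type contribution carried by the half-form bundle being absorbed into the shift $+1/2$ via the index $\epsilon$. On the other hand, Theorem \ref{thm:BSprec} specialized to eigenvalues in $I$ yields exactly the analogous statement with $F$ replaced by $f$: since $g(\cdot,k)$ was built as the local inverse of $f(\cdot,k)$, any eigenvalue $\lambda_k^{(j)} \in I$ satisfies $f(\lambda_k^{(j)}, k) = k^{-1}(j + 1/2) + O(k^{-\infty})$.

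Combining the two descriptions gives $f(\lambda_k^{(j)}, k) = F(\lambda_k^{(j)}, k) + O(k^{-\infty})$ at every eigenvalue in $I$. To propagate this identity to the whole interval, I would use a density argument: fix $E \in I$ and, for each large $k$, choose $j(k)$ with $\lambda_k^{(j(k))} \to E$ as $k \to +\infty$ (possible because the Weyl density of eigenvalues in the regular regime, itself a byproduct of either Bohr-Sommerfeld formulation, makes the eigenvalues fill $I$ with spacing $O(k^{-1})$). Passing to the limit yields $f_0(E) = F_0(E)$; subtracting this leading contribution, multiplying through by $k$ and iterating, one gets $f_\ell(E) = F_\ell(E)$ for every $\ell \geq 0$. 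In particular $f_0 = c_0/(2\pi)$ and $f_1 = c_1/(2\pi)$ on $I$, as claimed.

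The main obstacle is purely bookkeeping: one must check that the implicit normalizations embedded in the construction of $f$ (choice of microlocal model near the minimum $m_0$, phase and orientation conventions in the underlying Fourier integral operators, connection induced on $j_E^* L_1$, and so on) match the ``careful'' determinations of $c_0$ and $c_1$ underlying the statement of the proposition, and that the shift $1/2$ in $j + 1/2$ truly corresponds to the Maslov-type contribution encoded by the index $\epsilon$ attached to the half-form bundle $\delta$. Once these conventions are reconciled, the comparison-and-density step above runs with no additional difficulty.
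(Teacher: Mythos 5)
Your route is genuinely different from the paper's. You propose to identify $f_0, f_1$ by comparing the eigenvalue description of Theorem~\ref{thm:BSprec} against Charles's \emph{regular} Bohr--Sommerfeld conditions from \cite{Cha2}, and then to propagate the resulting identity $f(\cdot,k) = F(\cdot,k) + O(k^{-\infty})$ from the discrete set of eigenvalues to all of $I$ by a density argument. The paper instead proves the proposition by direct computation, with no appeal to \cite{Cha2}: for $f_0$, it writes $c_0(E) = \int_{\Gamma_E}\beta = \int_{\mathcal{C}_E}(\chi^{-1})^*\beta$, pulls back by the symplectomorphism $\chi$, and applies Stokes to identify $c_0(E)$ with the area $2\pi f_0(E)$ of the disk bounded by the circle of radius $\sqrt{2f_0(E)}$; for $f_1$, it returns to the normal-form construction, picks a specific initial FIO $U_k^{(0)}$ with symbol $u\otimes v$ ($v$ a square root of $\Psi$), uses Theorem~\ref{thm:FIOssp} to compute the subprincipal symbol $r_0$ explicitly, and then evaluates the averaging integral defining $\theta_1$, producing $\frac{1}{2\pi}\left(\int_{\Gamma_E}\nu + \int_{\Gamma_E}\kappa_E\right) = \frac{1}{2\pi}c_1(E)$ after a change of variables along the Hamiltonian flow. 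The paper's approach has the advantage of actually \emph{recovering} the regular conditions (as the abstract advertises), rather than assuming them.

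Beyond the difference of route, there is a real gap in what you've written: you defer the ``bookkeeping'' to a final remark, but that bookkeeping \emph{is} the substance of the proposition. Concretely, (i) your comparison-and-density step at best yields $f_\ell - F_\ell = \text{const}$ on $I$, because Charles's theorem gives the eigenvalues as the solution set of $F(E,k)\in k^{-1}(\Z+\tfrac{\epsilon}{2})$ and does not hand you a labelling that matches the $j$ in $E_k^{(j)} = g(k^{-1}(j+\tfrac12),k)$; a Weyl-count argument forces the two labellings to differ by a $k$-independent integer shift, which fixes the differences $f_\ell - F_\ell$ only modulo the freedom in choosing the determinations of $c_0, c_1$. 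Pinning the constants down requires the very identification $c_0(E) = 2\pi f_0(E)$ (area of the disk) that the paper gets from Stokes, or a limiting argument at $E\to 0$ that needs the smooth extension of $c_0$ through the critical value and the normalization $c_0(0)=0$. (ii) You need $\epsilon = 1$ on $I$ to replace $\epsilon/2$ by $1/2$ in Charles's conditions; the paper establishes this as a separate claim, and it is not automatic. Until both points are addressed, the density argument alone does not prove $f_0 = \tfrac{1}{2\pi}c_0$ and $f_1 = \tfrac{1}{2\pi}c_1$, only their equality up to constants.
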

Thus, we recover the regular Bohr-Sommerfeld conditions away from the minimum.

\subsection{Structure of the article}
The paper is organized as follows: we start by recalling some properties of Toeplitz operators on a compact manifold. Then, we briefly explain how to adapt the theory in the case where the phase space is the whole complex plane. The fourth section is devoted to the construction of Fourier integral operators that we use to construct our microlocal normal form in the following part. In section 6, we state the Bohr-Sommerfeld conditions and some consequences. In the last section, we investigate an example to give some numerical evidence of our results.

\section{Preliminaries and notations}

First, we introduce the notations and conventions that we will adopt through this whole article. They are already written in \cite{Cha2} for instance, but we recall them here for the sake of completeness.

\subsection{Quantum spaces}

Let $M$ be a connected compact K\"{a}hler manifold, with fundamental 2-form $\omega \in \Omega^2(M,\R)$. Assume $M$ is endowed with a prequantum bundle $L \rightarrow M$, that is a Hermitian holomorphic line bundle whose Chern connection $\nabla$ has curvature $\frac{1}{i} \omega$. Let $K \rightarrow M$ be a Hermitian holomorphic line bundle. For every positive integer $k$, define the quantum space $\Hil_{k}$ as:
\begin{equation*} \Hil_{k} = H^0(M,L^k \otimes K) = \left\{ \text{holomorphic sections of } L^k \otimes K \right\}. \end{equation*}
The space $\Hil_{k}$ is a subspace of the space $L^2(M,L^k \otimes K)$ of sections of finite $L^2$-norm, where the scalar product is given by
\begin{equation*} \langle \varphi, \psi \rangle = \int_{M} h_{k}(\varphi,\psi) \mu_{M} \end{equation*}
with $h_{k}$ the hermitian product on $L^k \otimes K$ induced by those of $L$ and $K$, and $\mu_{M}$ the Liouville measure on $M$. Since $M$ is compact, $\Hil_{k}$ is finite dimensional, and is thus given a Hilbert space structure with this scalar product.

\subsection{Geometric notations}

Unless otherwise mentioned, \enquote{smooth} will always mean $\classe{\infty}{}$, and a section of a line bundle will always be assumed to be smooth. The space of sections of a bundle $E \rightarrow M$ will be denoted by $\Gamma(M,E)$. 

Let $L_{P} \rightarrow P$ and $L_{N} \rightarrow N$ be two prequantum bundles over K\"{a}hler manifolds, whose fundamental 2-forms are denoted by $\omega_{P}$ and $\omega_{N}$. Denote by $p_{1}$ and $p_{2}$ the projections of $P \times N$ on each factor, and $L_{P} \boxtimes L_{N} = p_{1}^* L_{P} \otimes p_{2}^* L_{N}$; then, if $P \times N$ is endowed with the symplectic form $p_{1}^*\omega_{P} +p_{2}^*\omega_{N} $, $L_{P} \boxtimes L_{N} \rightarrow P \times N$ is a prequantum bundle.

Let $P^{op}$ be the manifold $P$ endowed with the symplectic form $-\omega_P$ and the (almost) complex structure opposed to the one of $P$, and let $L_{P}^{-1}$ be the inverse (dual) bundle of $L_{P}$ with induced hermitian and holomorphic structure and connection; then $L^{-1}_P \rightarrow P^{op}$ is a prequantum bundle. If $k$ is a positive integer, we can identify the Schwartz kernel of an operator $T:\Gamma(P,L^k_{P}) \rightarrow \Gamma(N,L^k_{N})$ to a section of $L^k_{N} \boxtimes L^{-k}_{P} \rightarrow N \times P^{op}$ \textit{via} the following formula:
\begin{equation*} Ts(x) = \int_{P} T(x,y).s(y)  \mu_{P}(y), \end{equation*}
where $\mu_{P}$ is the Liouville measure on the manifold $P$.

\subsection{Admissible and negligible sequences}

Let $M$ be a compact connected K\"ahler manifold. Let $(s_{k})_{k \geq 1}$ be a sequence such that for each $k$, $s_{k}$ belongs to $\Gamma(M,L^k \otimes K)$. We say that $(s_{k})_{k \geq 1}$ is 
\begin{itemize}
\item \textit{admissible} if for every positive integer $\ell$, for every vector fields $X_{1},\ldots,X_{\ell}$ on $M$, and for every compact set $C \subset M$, there exist a constant $c > 0$ and an integer $N$ such that
\begin{equation*} \forall m \in C \quad \| \nabla_{X_{1}} \ldots \nabla_{X_{\ell}} s_{k}(m) \| \leq c k^{N}, \end{equation*}
\item \textit{negligible} if for every positive integers $\ell$ and $N$, for every vector fields $X_{1},\ldots,X_{\ell}$ on $M$, and for every compact set $C \subset M$, there exists a constant $c > 0$ such that
\begin{equation*} \forall m \in C \quad \| \nabla_{X_{1}} \ldots \nabla_{X_{\ell}} s_{k}(m) \| \leq c k^{-N}. \end{equation*}
\end{itemize}
We say that $(s_{k})_{k \geq 1}$ is \textit{negligible} over an open set $U \subset M$ if the previous estimates hold for every compact subset of $U$. We denote by $O(k^{-\infty})$ any negligible sequence or the set of negligible sequences. The \textit{microsupport} $\text{MS}(s_{k})$ of an admissible sequence $(s_{k})_{k \geq 1}$ is the complement of the set of points of $M$ which admit a neighbourhood where $(s_{k})_{k \geq 1}$ is negligible. Finally, we say that two admissible sequences $(t_{k})_{k \geq 1}$ and $(s_{k})_{k \geq 1}$ are microlocally equal on an open set $U$ if $\text{MS}(t_{k} - s_{k}) \cap U = \emptyset$; the symbol $\sim$ will indicate microlocal equivalence. We can then define, \textit{via} the sequences of their Schwartz kernels, \textit{admissible} and \textit{smoothing} operators, and the microsupport and microlocal equality of operators.

\subsection{Toeplitz operators}

Let $\Pi_{k}$ be the orthogonal projector of $L^2(M,L^k \otimes K)$ onto $\Hil_{k}$. A \textit{Toeplitz operator} is any sequence $(T_{k}: \Hil_{k} \rightarrow \Hil_{k})_{k \geq 1}$ of operators of the form
\begin{equation} T_{k} = \Pi_{k} M_{f(.,k)} + R_{k} \label{eq:defToeplitz}\end{equation}
where $f(.,k)$ is a sequence of $\classe{\infty}{(M)}$ with an asymptotic expansion $f(.,k) = \sum_{\ell \geq 0} k^{-\ell} f_{\ell}$ for the $\classe{\infty}{}$ topology, $M_{f(.,k)}$ is the operator of multiplication by $f(.,k)$ and $R_{k}$ is a smoothing operator. They are the semiclassical analogue of the Toeplitz operators studied by Boutet de Monvel and Guillemin in \cite{BouGui}.

We recall the following essential theorem about Toeplitz operators, which is a consequence of the works of Boutet de Monvel and Guillemin \cite{BouGui} (see also \cite{Bord,Cha1,Gui}).

\begin{thm}
The set $\mathcal{T}$ of Toeplitz operators is a star algebra whose identity is $(\Pi_{k})_{k \geq 1}$. The contravariant symbol map
\begin{equation*} \sigma_{\text{cont}} : \mathcal{T} \rightarrow \classe{\infty}{(M)}[[\hbar]] \end{equation*}
sending $T_{k}$ into the formal series $\sum_{\ell \geq 0} \hbar^{\ell} f_{\ell}$ is well defined, onto, and its kernel is the ideal consisting of $O(k^{-\infty})$ Toeplitz operators. More precisely, for any integer $\ell$,
\begin{equation*} \| T_{k} \| = O(k^{-\ell}) \ \text{if and only if} \ \sigma_{\mathrm{cont}}(T_{k}) = O(\hbar^{\ell}). \end{equation*}
\end{thm}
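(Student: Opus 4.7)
The plan is to reduce every assertion of the theorem to the asymptotic structure of the Bergman kernel $\Pi_{k}$ and to stationary phase estimates. The core input I would invoke, citing \cite{Cha1,MaMa} and Boutet de Monvel--Sj\"ostrand, is that the Schwartz kernel $\Pi_{k}(x,y)$ is admissible, has microsupport equal to the diagonal with Gaussian-type decay away from it, and admits a full asymptotic expansion near the diagonal. From this, using coherent states $e_{k,m}$ peaked at $m \in M$, one obtains by stationary phase a reconstruction formula
\begin{equation*}
\langle \Pi_{k} M_{h(\cdot,k)} e_{k,m}, e_{k,m}\rangle = h_{0}(m) + k^{-1} \widetilde{h}_{1}(m) + O(k^{-2}),
\end{equation*}
where $\widetilde{h}_{1}$ involves $h_{1}$ plus an explicit differential expression in $h_{0}$. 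Iterating, the operator $\Pi_{k} M_{h(\cdot,k)}$ known up to $O(k^{-N})$ in norm determines $h_{0},\ldots,h_{N-1}$ pointwise.

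This immediately yields well-definedness and injectivity of $\sigma_{\text{cont}}$: if two symbols $f(\cdot,k)$ and $g(\cdot,k)$ give the same operator modulo a smoothing remainder, the reconstruction formula applied to $f-g$ forces $f_{\ell} = g_{\ell}$ for all $\ell$. Surjectivity is a Borel summation: any formal series $\sum \hbar^{\ell} f_{\ell}$ is the $\classe{\infty}{}$-asymptotic expansion of some $f(\cdot,k) \in \classe{\infty}{(M)}$, and $T_{k} := \Pi_{k} M_{f(\cdot,k)}$ realizes it. The identity claim is tautological, $\Pi_{k}$ being idempotent on $\Hil_{k}$ with constant symbol $1$.

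For the algebra structure, the main point is closure under composition. Writing the Schwartz kernel of $\Pi_{k} M_{f}\Pi_{k} M_{g}\Pi_{k}$ as a triple integral against $\Pi_{k}(x,u)\Pi_{k}(u,v)\Pi_{k}(v,y)$, the diagonal concentration of $\Pi_{k}$ localizes the integral near $u=v=x=y$; stationary phase in $(u,v)$ produces a kernel of the form $\Pi_{k}(x,y)\sum_{\ell\geq 0} k^{-\ell} c_{\ell}(x,y) + O(k^{-\infty})$ whose diagonal expansion, by the reconstruction formula above, can be lifted to an honest symbol $h(\cdot,k)$ with $h_{0}=fg$. This is the source of the Toeplitz star product and, in particular, shows $\mathcal{T}$ is an algebra. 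Closure under adjoints follows because the $L^{2}$ adjoint of $\Pi_{k} M_{f(\cdot,k)}$ is $M_{\overline{f(\cdot,k)}}\Pi_{k}$; sandwiching by $\Pi_{k}$ again and using closure under composition puts it back in Toeplitz form modulo a smoothing remainder, and the contravariant symbol of the adjoint is the conjugate series.

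Finally, the equivalence $\|T_{k}\| = O(k^{-\ell}) \Leftrightarrow \sigma_{\text{cont}}(T_{k}) = O(\hbar^{\ell})$: the direction $\Leftarrow$ uses $\|\Pi_{k} M_{h}\Pi_{k}\| \leq \|h\|_{\infty}$ applied to a symbol whose expansion starts at order $\ell$, together with the fact that smoothing operators are $O(k^{-\infty})$ in norm. The direction $\Rightarrow$ is the delicate one and uses coherent states: $|\langle T_{k} e_{k,m}, e_{k,m}\rangle| \leq \|T_{k}\|$, and the reconstruction formula identifies the left-hand side, modulo $O(k^{-1})$, with $f_{0}(m)$. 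A norm bound $\|T_{k}\| = O(k^{-1})$ thus forces $f_{0} \equiv 0$, and an induction on $\ell$ kills each subsequent coefficient in turn. The main obstacle is the stationary phase expansion for the composition, where one must keep careful track of the half-density and Chern connection contributions coming from $\Pi_{k}$; once this is done, injectivity, the star product, and the norm/symbol equivalence are all corollaries of the same coherent-state computation.
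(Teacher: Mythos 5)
The paper does not prove this theorem: it is stated without proof and explicitly attributed to Boutet de Monvel--Guillemin, with additional references to \cite{Bord,Cha1,Gui}. So there is no in-paper argument for you to match. That said, your sketch is a reasonable outline of the proof one finds in those references (in particular Charles \cite{Cha1} and Ma--Marinescu \cite{MaMa}): the core ingredient is indeed the near-diagonal asymptotic expansion of the Bergman kernel $\Pi_{k}$, from which the coherent-state reconstruction formula, the composition via stationary phase, the star-product structure, and the norm/symbol equivalence all follow. Interestingly, the paper itself proves analogous but distinct statements in the \emph{non-compact} Bargmann setting --- Lemma \ref{lm:kernelToep}, Lemma \ref{lm:covBarg}, and Corollary \ref{cor:compoToep}, proved in the appendix --- and the strategy there (exact Gaussian kernel for $\Pi_{k}^0$, extraction of the covariant symbol via stationary phase, identification of the diagonal data, then recovery of the contravariant symbol by inverting $\exp(k^{-1}\Delta)$) is structurally the same as what you propose for the compact case.

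Two remarks on your sketch rather than gaps. First, for the direction ``$\| T_{k} \| = O(k^{-\ell}) \Rightarrow \sigma_{\mathrm{cont}}(T_{k}) = O(\hbar^{\ell})$'' you need the coherent-state estimate to be \emph{uniform} in $m$; on a compact $M$ this follows from the Bergman kernel asymptotics being uniform, which is worth flagging since it is precisely what fails off-diagonal without the Gaussian decay. Second, you correctly observe that injectivity, composition, and the norm equivalence are ``corollaries of the same coherent-state computation''; what deserves emphasis, and what the paper's appendix makes explicit in the Bargmann model, is that the key structural fact is the relation between contravariant and covariant symbols (here $\tilde{a} = \exp(k^{-1}\Delta) a$ on the diagonal), because it is through the covariant symbol that the kernel determines the operator and the product law can be written cleanly. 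Your phrase ``keep careful track of the half-density and Chern connection contributions'' names the right difficulty without resolving it, which is appropriate for a sketch, but a full proof would need to make that bookkeeping precise.
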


We will mainly work with the \textit{normalized symbol}
\begin{equation*} \sigma_{\text{norm}} = \left( \text{Id} + \frac{\hbar}{2} \Delta \right) \sigma_{\text{cont}} \end{equation*}
where $\Delta$ is the holomorphic Laplacian acting on $\classe{\infty}{(M)}$; unless otherwise mentioned, when we talk about a subprincipal symbol, this refers to the normalized symbol. This symbol has the good property that, if $T_{k}$ and $S_{k}$ are Toeplitz operators with respective principal symbols $t_{0}$ and $s_{0}$, then
\begin{equation*} \sigma_{\text{norm}}(T_{k}S_{k}) = t_{0}s_{0} + \frac{\hbar}{2i} \left\{ t_{0},s_{0} \right\} + O(\hbar^2). \end{equation*}

Finally, we will need to apply functional calculus to Toeplitz operators.
\begin{prop}[\cite{Cha1}]
Let $T_{k}$ be a self-adjoint Toeplitz operator with symbol $\sum_{\ell \geq 0} \hbar^{\ell} t_{\ell}$ and $g$ be a function of $\classe{\infty}{(\R,\C)}$. Then $g(T_{k})$ is a Toeplitz operator with principal symbol $g(t_{0})$.
\end{prop}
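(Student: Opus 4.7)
The strategy is the standard Helffer–Sjöstrand approach, adapted to the Toeplitz framework. Since $T_k$ is self-adjoint with uniformly bounded principal symbol $t_0$ on the compact manifold $M$, its spectrum is contained in a fixed compact interval $J \subset \mathbb{R}$ for $k$ large. It therefore suffices to prove the result for $g \in \classe{\infty}_c{(\R,\C)}$, since any $\classe{\infty}$ function agreeing with $g$ on a neighbourhood of $J$ produces the same operator $g(T_k)$.

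For such $g$, fix an almost analytic extension $\tilde g : \C \to \C$ with compact support, satisfying $\tilde g|_\R = g$ and $|\bar\partial \tilde g(z)| \leq C_N |\mathrm{Im}\,z|^N$ for every $N$. The Helffer–Sjöstrand formula gives
\begin{equation*}
 g(T_k) = -\frac{1}{\pi} \int_{\C} \bar\partial \tilde g(z) \, (z - T_k)^{-1} \, L(dz),
\end{equation*}
where $L$ is Lebesgue measure on $\C$. The plan is then to construct, for $z$ outside the spectrum of $T_k$, a Toeplitz parametrix for the resolvent $(z-T_k)^{-1}$ whose symbol expansion is controlled in negative powers of $\mathrm{Im}\,z$, and to plug this into the integral.

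The construction of the parametrix proceeds by the symbolic calculus guaranteed by the theorem recalled above. One seeks a Toeplitz operator $R_k(z)$ with normalized symbol $r(z,\cdot,\hbar) = \sum_\ell \hbar^\ell r_\ell(z,\cdot)$ such that $(z-T_k) R_k(z) = \mathrm{Id} + O(k^{-\infty})$. Solving order by order, the leading symbol is $r_0(z,\cdot) = (z - t_0)^{-1}$, and each subsequent $r_\ell$ is a universal polynomial expression in $r_0, t_0, t_1, \ldots, t_\ell$ and their derivatives, which shows that $|r_\ell(z,\cdot)| \leq C_\ell |\mathrm{Im}\,z|^{-(2\ell+1)}$. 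The norm estimate in the theorem then yields $\|(z-T_k)^{-1} - R_k(z)\| = O(k^{-\infty} |\mathrm{Im}\,z|^{-M})$ for some $M$ depending on the truncation order. Multiplying by $\bar\partial \tilde g(z)$, which vanishes to infinite order on $\R$, makes all the negative powers of $\mathrm{Im}\,z$ integrable.

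Integrating the asymptotic expansion of $R_k(z)$ against $-\frac{1}{\pi} \bar\partial \tilde g(z) \, L(dz)$ term by term and applying the scalar Helffer–Sjöstrand formula pointwise on $M$ identifies $g(T_k)$ with a Toeplitz operator whose normalized symbol has principal part $-\frac{1}{\pi}\int_\C \bar\partial \tilde g(z)(z-t_0)^{-1} L(dz) = g(t_0)$, as required. The main obstacle is the uniform control of the parametrix as $\mathrm{Im}\, z \to 0$: one must carefully match the order of the expansion with the power of $|\mathrm{Im}\, z|$ absorbed by $\bar\partial \tilde g$, and verify that the remainder estimates hold uniformly in $z$ in the compact support of $\tilde g$. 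Everything else is a routine transcription into the Toeplitz symbolic calculus of the pseudodifferential argument of Helffer–Sjöstrand.
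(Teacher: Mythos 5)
The paper does not reproduce a proof of this proposition; it simply cites \cite{Cha1}, where the result is established by essentially the Helffer--Sj\"ostrand argument you outline. Your sketch is correct: the reduction to compactly supported $g$ (the spectrum of $T_k$ lies in a fixed compact interval since $\|T_k\|\to\|t_0\|_\infty$), the construction of a Toeplitz parametrix for $(z-T_k)^{-1}$ with symbol coefficients bounded by powers of $|\mathrm{Im}\,z|^{-1}$, the use of the norm-versus-symbol estimate from the algebra theorem to control the remainder, and the absorption of the negative powers of $|\mathrm{Im}\,z|$ by the infinite-order vanishing of $\bar\partial\tilde g$ on the real axis are precisely the right ingredients, and integrating the truncated parametrix term by term against $-\pi^{-1}\bar\partial\tilde g\,L(dz)$ produces a Toeplitz operator whose principal symbol is $g(t_0)$ by the scalar Helffer--Sj\"ostrand identity applied pointwise.
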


\section{Toeplitz operators on the complex plane}

\subsection{Bargmann spaces}

We consider the K\"{a}hler manifold $\C \simeq \R^2$ with coordinates $(x,\xi)$, standard complex structure and symplectic form $\omega_{0} = d\xi \wedge dx$. Let $L_0 = \R^2 \times \C \rightarrow \R^2$ be the trivial fiber bundle with standard hermitian metric $h_{0}$ and connection $\nabla^0$ with $1$-form $\frac{1}{i} \alpha$, where $\alpha_u(v) = \frac{1}{2} \omega_0(u,v)$; endow $L_{0}$ with the unique holomorphic structure compatible with $h_{0}$ and $\nabla^0$.
For every positive integer $k$, the quantum space that we consider is
\begin{equation*} \Hil_k^0 = H^0(\R^2,L_{0}^k) \cap L^2(\R^2,L_{0}^k); \end{equation*}
this means that in this case, we make the arbitrary choice that the auxiliary line bundle $K$ is the trivial bundle with flat connection.
These spaces coincide with Bargmann spaces \cite{Bar, Bar2}, which are spaces of square integrable functions with respect to a Gaussian weight. More precisely, we choose the holomorphic coordinate $z=\frac{x-i\xi}{\sqrt{2}}$ and note
\begin{equation*} \Barg_{k} = \left\{ f\psi^k; f:\C \mapsto \C \ \text{holomorphic}, \int_{\R^2} |f(z)|^2 \exp(-k|z|^2) \ d\lambda(z) < +\infty \right\} \end{equation*}
with $\psi:\C \rightarrow \C, z \mapsto  \exp\left( -\frac{1}{2} |z|^2 \right)$, $\psi^k$ its $k$-th tensor power, and $\lambda$ the Lebesgue measure on $\R^2$. It is easily shown that for $k \geq 1$, $\Hil_k^0$ is precisely $\Barg_{k}$. Sometimes, we will use the identification of the section $f \psi$ to the function $f$ in abusive notations, such as talking about the operator $\frac{\partial}{\partial z}$ action on $\Barg_k$, etc.
It is standard that $\Barg_{k}$ is closed in $L^2\left(\R^2,\exp(-k|z|^2) d\lambda(z) \right)$, and is thus a Hilbert space; moreover, we know an orthonormal basis of $\Barg_{k}$.
\begin{prop} The family $(\varphi_{n,k})_{n \in \N}$, where $\varphi_{n,k}(z) = \sqrt{\frac{k^{n+1}}{2\pi n!}} \ z^n  \psi^k$, is an orthonormal basis of $\Barg_{k}$. \end{prop}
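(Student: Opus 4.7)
The plan is to verify two things: (i) that the family $(\varphi_{n,k})_{n\in\N}$ is orthonormal, and (ii) that it is total in $\Barg_k$.

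For orthonormality, I would use the identification $f\psi^k \leftrightarrow f$ to write, for any $n,m \in \N$,
\begin{equation*}
\scal{z^n\psi^k}{z^m\psi^k} = \int_{\R^2} z^n \bar z^m e^{-k|z|^2} d\lambda(z),
\end{equation*}
noting that the Liouville measure on $(\C,\omega_0)$ coincides with Lebesgue measure on $\R^2$. Passing to polar coordinates $z=re^{i\theta}$, the angular factor $\int_0^{2\pi} e^{i(n-m)\theta}\,d\theta$ already yields $2\pi\,\delta_{nm}$, giving orthogonality. When $n=m$, the remaining radial integral $\int_0^\infty r^{2n+1}e^{-kr^2}\,dr$ reduces, via $u = kr^2$, to a Gamma function evaluation equal to $n!/(2k^{n+1})$. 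Combining these two computations gives $\|z^n\psi^k\|^2 = 2\pi n!/k^{n+1}$ (up to the convention-dependent overall constant already built into the definition of $\mu_M$), which is exactly the normalization that makes each $\varphi_{n,k}$ a unit vector.

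For totality, I would show that any $s=f\psi^k \in \Barg_k$ orthogonal to every $\varphi_{n,k}$ is zero. By definition, $f$ is entire, so it admits a Taylor series $f(z)=\sum_{m\geq 0}a_m z^m$ converging locally uniformly. Computing $\scal{f\psi^k}{\varphi_{n,k}}$ in polar coordinates and interchanging sum and integral, the angular integration isolates the single mode $m=n$, so $\scal{f\psi^k}{\varphi_{n,k}}$ is a nonzero constant times $a_n$. If every such inner product vanishes, then every Taylor coefficient of $f$ is zero, hence $f\equiv 0$. An alternative but equivalent route is to show directly that the partial sums of the Taylor series of $f$ converge to $f$ in $\Barg_k$, which also proves that $\text{Span}(\varphi_{n,k})$ is dense.

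The main technical point is justifying the interchange of the Taylor sum and the integral against the Gaussian weight in the second step. The partial sums of $f$ converge only locally uniformly in $\C$, so one cannot simply apply dominated convergence; however, an $L^2$-bound on the partial sums follows from Cauchy--Schwarz against $\varphi_{n,k}$ using the orthonormality obtained in part (i), together with Bessel's inequality applied to $f\psi^k \in \Barg_k$. This bound, combined with the rapid Gaussian decay of $e^{-k|z|^2}$, makes the interchange rigorous. Apart from this point, the proof is a direct computation.
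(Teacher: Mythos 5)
The paper states this proposition without proof, referring to Bargmann's original work, so there is no in-paper argument to compare against; your overall route --- polar-coordinate orthogonality, Gamma-function normalization, and totality by Taylor expansion --- is the standard one and is sound in outline.

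The one concrete inaccuracy is in the normalization constant. Your polar-coordinate computation ($2\pi$ from the angular integral, $n!/(2k^{n+1})$ from the radial one) actually yields $\|z^n\psi^k\|^2 = \pi n!/k^{n+1}$, not $2\pi n!/k^{n+1}$, and the parenthetical appeal to a ``convention-dependent constant built into $\mu_M$'' does not quite explain the gap. The missing factor of $2$ comes from the coordinate change: the paper sets $z = (x - i\xi)/\sqrt{2}$ while integrating against the Lebesgue measure $d\lambda = dx\,d\xi$ in the real coordinates $(x,\xi)$, and $dx\,d\xi = 2\,d(\Re z)\,d(\Im z)$. So a polar computation in the $z$-plane must be multiplied by $2$; equivalently, work directly in $(x,\xi)$ where $|z|^2 = (x^2+\xi^2)/2$. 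For the totality step, the Bessel/Cauchy--Schwarz justification you sketch does work, but is heavier than needed: since $f(z)\bar z^n e^{-k|z|^2}$ is absolutely integrable (Cauchy--Schwarz of $f e^{-k|z|^2/2}\in L^2$ against $|z|^n e^{-k|z|^2/2}\in L^2$), one may truncate the integral to $\{|z|\le R\}$, exchange sum and integral there using uniform convergence of the Taylor series on compacts, note that the angular integral already isolates the mode $m=n$ on every such disk, and then let $R\to\infty$.
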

We denote by $\Pi_{k}^0$ the orthogonal projector from $L^2(\R^2,L_{0}^k)$ onto $\Barg_{k}$.

\subsection{Admissible and negligible sequences}

Since we will only deal with $\classe{\infty}$ sections, we can adopt the same definitions for admissible and negligible sequences as in the previous section. 

\subsection{Toeplitz operators}
\label{subsection:ToepBarg}

To consider Toeplitz operators acting on Bargmann spaces without raising technical issues, we could only work with operators with compactly supported kernels. However, we would miss the simple case of the harmonic oscillator. So we need to introduce symbol classes, very similar to the ones used when dealing with $\hbar$-pseudodifferential operators (see for instance \cite{DS}). The proofs of the results of this part are collected in the appendix.

Let $d$ be a positive integer. For $u$ in $\C^{d}$, set $m(u) = \left(1 + \|u\|^2 \right)^{\frac{1}{2}}$. For every integer $j$, we define the symbol class $\mathcal{S}_{j}^d$ as the set of sequences of functions of $\classe{\infty}{(\C^{d})}$ which admit an asymptotic expansion of the form $a(.,k) = \sum_{\ell \geq 0} k^{-\ell}a_{\ell}$ in the sense that
\begin{itemize}
\item $\forall \ell \in \N \quad  \forall \alpha, \beta \in \N^{2d} \quad \exists \ C_{\ell,\alpha,\beta} > 0 \quad |\partial_{z}^{\alpha} \partial_{\bar{z}}^{\beta} a_{\ell}| \leq C_{\ell,\alpha,\beta} m^j$,
\item $\forall L \in \N^* \quad \forall \alpha, \beta \in \N^{2d} \quad \exists  \ C_{L,\alpha} > 0 \quad \left| \partial_{z}^{\alpha} \partial_{\bar{z}}^{\beta} \left(a - \sum_{\ell=0}^{L-1} k^{-\ell} a_{\ell} \right)  \right| \leq C_{L,\alpha,\beta} k^{-L} m^j$.
\end{itemize}
Set $\mathcal{S}^d = \bigcup_{j \in \Z} \mathcal{S}_{j}^d$. Now, let $a(.,k)$ be a symbol in $\mathcal{S}_{j}^1$, and consider the operator 
\begin{equation} A_k = \text{Op}(a(.,k)) = \Pi_{k}^0 M_{a(.,k)} \Pi_{k}^0 \label{eq:ToepC}\end{equation}
acting on the subspace
\begin{equation*} \mathfrak{S}_k = \left\{ \varphi \in \Barg_k; \ \forall j \in \N \quad \sup_{z \in \C} \left( |\varphi(z)|(1 + |z|^2)^{j/2}  \right) < + \infty  \right\} \end{equation*}	
of $\Barg_k$. As shown in \cite{Bar2}, $\mathfrak{S}_k$ corresponds to the Schwartz space \textit{via} a particular unitary mapping between $L^2(\R)$ and $\Barg_k$, the Bargmann transform. It is easily seen that $A_k$ sends $\mathfrak{S}_k$ into $\mathfrak{S}_k$; it is even continuous $\mathfrak{S}_k \rightarrow \mathfrak{S}_k$. Note that if $j=0$, then $A_k$ is bounded $\Barg_k \rightarrow \Barg_k$, and its norm is lower than $\sup |a(.,k)|$.

Let $t$ be the section of $L_{0} \rightarrow \R^2$ with constant value 1. Let $F_{0}$ be the section of $L_0 \boxtimes L^{-1}_{0}$ given by
\begin{equation*} F_{0}(z_1,z_2) = \exp \left(-\frac{1}{2} \left(|z_1|^2 + |z_2|^2 - 2 z_1 \bar{z}_2 \right) \right) t(z_1) \otimes  t^{-1}(z_2), \end{equation*}
or equivalently, if $u = (x,\xi)$ where $z = \frac{1}{\sqrt{2}}(x - i\xi)$,
\begin{equation*} F_{0}(u,v) = \exp \left(-\frac{1}{4}\|u-v\|^2 - \frac{i}{2} \omega_{0}(u,v) \right) t(u) \otimes  t^{-1}(v). \end{equation*}
Adapting the result of section $1.c$ of \cite{Bar}, with the good normalization for the weight defining our Bargmann spaces, we have the following:
\begin{prop} $\Pi_{k}^0$ admits a Schwartz kernel given by $\frac{k}{2 \pi} F_{0}^k $. \end{prop}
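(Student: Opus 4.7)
The plan is to compute the Schwartz kernel directly from the orthonormal basis $(\varphi_{n,k})_{n\in \N}$ given in the previous proposition, and then to recognize the closed-form answer as $\frac{k}{2\pi}F_{0}^k$.

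\textbf{Step 1: Kernel as a sum over the basis.} Since $\Barg_{k}$ is a closed subspace of $L^2(\R^2,L_{0}^k)$ with orthonormal basis $(\varphi_{n,k})_{n\in \N}$, the orthogonal projector is given formally by
\begin{equation*}
(\Pi_{k}^0 s)(z_{1}) = \sum_{n\ge 0}\scal{s}{\varphi_{n,k}}\,\varphi_{n,k}(z_{1}),
\end{equation*}
for $s \in L^2(\R^2,L_{0}^k)$. Using the hermitian metric $h_k$ on $L_{0}^k$ to identify the antilinear factor $\overline{\varphi_{n,k}(z_{2})}$ with an element of $L_{0}^{-k}(z_{2})$, this means the Schwartz kernel (in the convention fixed in Section 2.2) is the section of $L_{0}^k \boxtimes L_{0}^{-k} \to \R^2 \times (\R^2)^{op}$ given by
\begin{equation*}
\Pi_{k}^0(z_{1},z_{2}) = \sum_{n\ge 0}\varphi_{n,k}(z_{1})\otimes \overline{\varphi_{n,k}(z_{2})}.
\end{equation*}
I would first justify that this series converges in the $\classe{\infty}{}$ topology on compacts in $\R^2 \times \R^2$, so that the formal expression really defines a smooth section.

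\textbf{Step 2: Summing the series.} Writing $\varphi_{n,k} = \sqrt{\tfrac{k^{n+1}}{2\pi n!}}\, z^n \psi^k$ and expressing everything in the trivialization $t$, so that $\psi^k = e^{-\tfrac{k}{2}|z|^2}\,t^k$, I would plug into Step~1 and pull out the factor $t(z_{1})^{\otimes k}\otimes t^{-1}(z_{2})^{\otimes k}$. The remaining numerical factor is
\begin{equation*}
\frac{k}{2\pi}\,e^{-\tfrac{k}{2}(|z_{1}|^2+|z_{2}|^2)}\,\sum_{n\ge 0}\frac{(kz_{1}\bar z_{2})^n}{n!} = \frac{k}{2\pi}\,e^{-\tfrac{k}{2}(|z_{1}|^2+|z_{2}|^2-2z_{1}\bar z_{2})},
\end{equation*}
using the exponential series. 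Comparing with the explicit formula for $F_{0}$ given in the statement, one reads off that this is exactly $\tfrac{k}{2\pi} F_{0}^k$.

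\textbf{Step 3: Verification of the defining property.} To conclude, I would check that the section so obtained does reproduce the projector, \textit{i.e.}\ that for every $s \in \Barg_{k}$,
\begin{equation*}
\int_{\R^2}\frac{k}{2\pi}F_{0}^k(z_{1},z_{2})\cdot s(z_{2})\,\mu_{\R^2}(z_{2}) = s(z_{1}),
\end{equation*}
and vanishes on the orthogonal complement. Both properties follow from Step~1 once convergence is justified, by dominated convergence applied basis element by basis element; the reproducing identity above amounts to the Bergman reproducing formula for $\Barg_{k}$.

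\textbf{Main difficulty.} The only genuine issue is bookkeeping: passing from the sum of tensor products $\varphi_{n,k}(z_{1})\otimes \overline{\varphi_{n,k}(z_{2})}$, defined via the hermitian metric, to a section of $L_{0}^k \boxtimes L_{0}^{-k}$ expressed in the trivialization $t^k \otimes t^{-k}$, and checking that the normalization $\tfrac{k}{2\pi}$ is the correct one with respect to the Liouville measure $\mu_{\R^2}=d\lambda$ used here (as opposed to the Gaussian-weighted measure used in \cite{Bar}). This is what the phrase \enquote{with the good normalization for the weight} in the statement refers to, and it is purely a matter of keeping track of factors of $k$ and $2\pi$.
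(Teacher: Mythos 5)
Your computation is correct. The paper itself does not spell out a proof — it simply cites section $1.c$ of Bargmann's original article and says that the formula there must be adapted to the normalization in use. Your approach instead derives the kernel directly from the orthonormal basis $(\varphi_{n,k})_{n\in\N}$ that the paper records in the immediately preceding proposition, which is a clean, self-contained alternative: the identity
\begin{equation*}
\sum_{n\ge 0}\frac{k^{n+1}}{2\pi\,n!}\,(z_{1}\bar z_{2})^{n}\,e^{-\frac{k}{2}(|z_{1}|^{2}+|z_{2}|^{2})}
= \frac{k}{2\pi}\,e^{-\frac{k}{2}\left(|z_{1}|^{2}+|z_{2}|^{2}-2z_{1}\bar z_{2}\right)}
\end{equation*}
does reproduce exactly $\frac{k}{2\pi}F_{0}^{k}$ once the $t^{k}\otimes t^{-k}$ trivialization is reinstated, and one can verify as a sanity check that $\int_{\R^2}|z|^{2n}e^{-k|z|^2}\,d\lambda=2\pi\,n!/k^{n+1}$, which confirms the normalization $\sqrt{k^{n+1}/(2\pi n!)}$. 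The one point to make sure you state explicitly (you flag it but defer the details) is the locally uniform absolute convergence of $\sum_n \varphi_{n,k}(z_1)\otimes\overline{\varphi_{n,k}(z_2)}$ and its derivatives, so that the term-by-term identification is legitimate; on a polydisc $|z_i|\le R$ this is immediate since $\sum_n k^{n+1}R^{2n}/(2\pi n!)$ converges, and differentiation only introduces polynomial factors that are dominated by the $1/n!$. With that bookkeeping, the argument is complete and buys you a proof from first principles where the paper only offers a citation.
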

In the rest of the paper, we will use the same letter to designate an operator and its Schwartz kernel. This proposition allows us to compute the Schwartz kernel of any Toeplitz operator.
\begin{lm}Let $a(.,k)$ be a symbol in $\mathcal{S}_{j}^1$; then $A_k = \text{Op}(a(.,k))$ admits a Schwartz kernel given by
\begin{equation} \begin{split} A_k(z_1,z_2) = \frac{k}{2 \pi} \exp\left( -\frac{k}{2} \left( |z_1|^2 + |z_2|^2 - 2 z_1 \bar{z}_2 \right) \right) \tilde{a}(z_1,z_2,k)  \\ + R_k \exp\left(-Ck|z_1 - z_2|^2\right), \end{split} \label{eq:SchToep} \end{equation}
where $\tilde{a}(.,.,k)$ belongs to $\mathcal{S}_{j}^2$, $R_k$ is negligible and $C$ is some positive constant. Moreover, one has 
\begin{equation} \tilde{a}(z,z,k) = \left(\exp\left(k^{-1}\Delta \right)a\right)\left(z,k\right)  \label{eq:covcont}\end{equation}
where $\Delta = \frac{\partial}{\partial_z} \frac{\partial}{\partial_{ \bar{z}}} $ is the holomorphic Laplacian acting on $\classe{\infty}{(\C^2)}$, in the sense that the asymptotic expansion of $\tilde{a}(.,.,k)$ is obtained by applying the formal asymptotic expansion of the operator $\exp\left(k^{-1}\Delta \right)$ to the asymptotic expansion of $a(.,k)$.
\label{lm:kernelToep}\end{lm}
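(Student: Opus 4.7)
The plan is to compute $A_k(z_1, z_2)$ from the integral formula
\begin{equation*}
A_k(z_1, z_2) = \left(\frac{k}{2\pi}\right)^{\!2} \int_{\C} F_0^k(z_1, w)\, a(w,k)\, F_0^k(w, z_2)\, \mu_M(w)
\end{equation*}
supplied by the preceding proposition, and to isolate the leading Gaussian via the algebraic identity
\begin{equation*}
|z_1|^2 + 2|w|^2 + |z_2|^2 - 2 z_1 \bar w - 2w\bar z_2 = (|z_1|^2 + |z_2|^2 - 2 z_1 \bar z_2) + 2(w-z_1)(\bar w - \bar z_2),
\end{equation*}
which factors $\exp\!\bigl(-\tfrac{k}{2}(|z_1|^2+|z_2|^2-2z_1\bar z_2)\bigr)$ out of the integrand and leaves
\begin{equation*}
\tilde a(z_1, z_2, k) := \frac{k}{2\pi}\int_{\C} \exp\!\bigl(-k(w-z_1)(\bar w - \bar z_2)\bigr)\, a(w, k)\, \mu_M(w).
\end{equation*}
The task then reduces to showing that $\tilde a$, up to a negligible remainder of the claimed form, is a symbol in $\mathcal{S}_j^2$ with the asymptotic expansion announced in \eqref{eq:covcont}.

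I will treat $\tilde a$ by a complex Laplace method. Setting $\beta = w - z_1$ and completing the square, the exponent $-k\beta(\bar\beta + \bar z_1 - \bar z_2)$ has real part $-k|w-m|^2 + \tfrac{k}{4}|z_1-z_2|^2$ with $m = (z_1+z_2)/2$, so the integrand is a Gaussian of width $k^{-1/2}$ centred at $w = m$. Taylor-expanding $a(z_1+\beta, k)$ in $(\beta,\bar\beta)$ to order $L$ and integrating each monomial via the generating-function identity
\begin{equation*}
\int_{\C} \exp\!\bigl(-k|\beta|^2 + s\beta + t\bar\beta\bigr)\,\mu_M(\beta) = \frac{2\pi}{k}\exp\!\left(\frac{st}{k}\right),
\end{equation*}
evaluated at $s = -k(\bar z_1 - \bar z_2)$, $t = 0$, yields the formal series
\begin{equation*}
\tilde a(z_1,z_2,k) \sim \sum_{\ell\geq 0} k^{-\ell} \sum_{n\geq \ell} \frac{(\bar z_2 - \bar z_1)^{n-\ell}}{\ell!\,(n-\ell)!}\,(\partial_w^\ell \partial_{\bar w}^n a)(z_1,k),
\end{equation*}
which coincides with the formal action of $\exp(k^{-1}\Delta)$ on $a$ in the sense of the statement; each coefficient inherits polynomial growth in $(z_1,z_2)$ from the symbol estimates on $a$, placing $\tilde a$ in $\mathcal{S}_j^2$.

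The main obstacle is turning this formal stationary-phase calculation into a clean remainder of the announced form $R_k\exp(-Ck|z_1-z_2|^2)$. I insert a smooth cutoff $\chi$ equal to $1$ on $|w-m|\leq r$ for a fixed small $r>0$. On the support of $\chi$, the Taylor remainder of order $L$ is $O(|\beta|^L m(z_1)^j)$ and Gaussian integration gains a factor $k^{-L/2}$; taking $L$ arbitrarily large yields a contribution that is absorbed into $R_k$. On the complement, the real exponent is bounded by $-ck + \tfrac{k}{4}|z_1-z_2|^2$, so the outside contribution to $\tilde a$ is $O(e^{-ck})\exp(\tfrac{k}{4}|z_1-z_2|^2)$; multiplication by the extracted prefactor converts $\exp(\tfrac{k}{4}|z_1-z_2|^2)$ into $\exp(-\tfrac{k}{4}|z_1-z_2|^2)$, giving precisely the required Gaussian decay with any $C<1/4$. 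Finally, identity \eqref{eq:covcont} drops out of the expansion upon setting $z_1=z_2=z$: the factors $(\bar z_2-\bar z_1)^{n-\ell}$ force $n=\ell$ and the series collapses to $\sum_\ell (\ell!\,k^\ell)^{-1}\Delta^\ell a(z,k) = (\exp(k^{-1}\Delta)a)(z,k)$.
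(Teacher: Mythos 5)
Your algebraic reduction, the generating-function identity, and the observation that the diagonal collapses the formal series to $\exp(k^{-1}\Delta)a$ are all correct, but the Laplace-method step has a genuine gap: you correctly note that the integrand peaks in modulus at $w=m=(z_1+z_2)/2$, yet you Taylor-expand $a$ at $w=z_1$ rather than at $m$. Because the Gaussian is not centred where you expand, the announced formal series
\begin{equation*}
\tilde a(z_1,z_2,k)\sim\sum_{\ell\geq 0}k^{-\ell}\sum_{n\geq\ell}\frac{(\bar z_2-\bar z_1)^{n-\ell}}{\ell!\,(n-\ell)!}\,(\partial_w^\ell\partial_{\bar w}^n a)(z_1,k)
\end{equation*}
contains, at every fixed power of $k^{-1}$, an infinite sum over $n$ which has no reason to converge for a merely $\classe{\infty}{}$ symbol $a$; indeed, even formally, raising the Taylor order $L$ by one alters the $k^0$ coefficient (through the $m=0$, $n=L-1$ term), so the truncations do not stabilize order by order in $k^{-1}$. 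Thus the $\tilde a_\ell$ are not well-defined functions and you cannot conclude $\tilde a\in\mathcal{S}_j^2$. Relatedly, the claim that ``Gaussian integration gains a factor $k^{-L/2}$'' for the Taylor remainder is not correct off the diagonal: on the effective support of the Gaussian one has $|\beta|\approx|z_1-z_2|/2+O(k^{-1/2})$, so integrating $|\beta|^L$ against it gives $O\bigl(e^{k|z_1-z_2|^2/4}(|z_1-z_2|^L+k^{-L/2})\bigr)$; the $k^{-L/2}$ gain only emerges after multiplying by the extracted prefactor and bounding $|z_1-z_2|^Le^{-\varepsilon k|z_1-z_2|^2}$ by a constant times $k^{-L/2}$.

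The repair is to expand at the true Gaussian centre: write $w=m+\gamma$ and Taylor-expand $a(m+\gamma,k)$ in $\gamma$. The Gaussian moments $\frac{k}{2\pi}\int e^{-k|\gamma|^2+s\gamma+t\bar\gamma}\gamma^p\bar\gamma^q\,d\lambda$, with $s=k(\bar z_2-\bar z_1)/2$ and $t=-k(z_2-z_1)/2$, then produce at each order $k^{-\ell}$ a finite polynomial in $z_1-z_2$ and $\bar z_1-\bar z_2$ applied to derivatives of $a$ at $m$, so each coefficient lies in $\mathcal{S}_j^2$, and the diagonal evaluation $z_1=z_2=z$ still yields $\exp(k^{-1}\Delta)a$. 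This is essentially what the paper does after passing to real coordinates, translating the integration variable so that it is centred at the midpoint, and applying H\"ormander's complex stationary phase; the resulting $\tilde a_\ell$ are finite linear combinations of derivatives of the $a_m$ evaluated at the midpoint, which is what makes $\tilde a$ a genuine symbol.
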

This leads us to the following definition.
\begin{dfn}
A \textit{Toeplitz operator} is an operator from $\mathfrak{S}_k$ to $\mathfrak{S}_k$ of the form
\begin{equation} \Pi_k^0 M_{a(.,k)} \Pi_k^0 + S_k,\end{equation}
where $a(.,k)$ is a symbol in $\mathcal{S}^1$ and the kernel of $S_k$ satisfies
\begin{equation} S_k(z_1,z_2) = R_k(z_1,z_2) \exp\left(-Ck|z_1 - z_2|^2\right)  \label{eq:negexp}\end{equation}
with $R_k$ negligible and $C$ some positive constant. As in the compact case, $ \sigma_{\text{cont}}(A_k) = \sum_{\ell \geq 0} \hbar^{\ell} a_{\ell}$ is called the \textit{contravariant symbol} of $A_k$. We denote by $\mathcal{T}_j$ the set of Toeplitz operators with contravariant symbol belonging to $\mathcal{S}_j^1$.
\end{dfn}
In fact, lemma \ref{lm:kernelToep} defines the \textit{covariant symbol} of $A_{k}$:
\begin{equation*} \sigma_{\text{cov}}(A_k)(z) = \sum_{\ell \geq 0} \hbar^{\ell} \tilde{a}_{\ell}(z,z).\end{equation*}
The following lemma gives an important property of the latter.
\begin{lm}
If the covariant symbol of $A_{k}$ vanishes, then the Schwartz kernel of $A_{k}$ is of the form (\ref{eq:negexp}).
\label{lm:covBarg}\end{lm}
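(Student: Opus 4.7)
The plan is to use the defining form of Toeplitz operators on $\C$: one may write $A_k = \Pi_k^0 M_{a(.,k)} \Pi_k^0 + S_k$ where $a(.,k) \in \mathcal{S}^1$ is the contravariant symbol of $A_k$ and $S_k$ already has a kernel of the type (\ref{eq:negexp}). Since the sum of two kernels of this type is again of this type, it is enough to show that the assumption $\sigma_{\text{cov}}(A_k) = 0$ forces $\Pi_k^0 M_{a(.,k)} \Pi_k^0$ alone to have such a kernel. I would proceed in two steps: first deduce that the formal expansion of $a(.,k)$ vanishes term by term, hence $a(.,k)$ is negligible as a symbol, and then exploit the explicit form of $\Pi_k^0$ to extract Gaussian decay.

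For the first step, Lemma \ref{lm:kernelToep} gives the diagonal identity $\tilde{a}(z,z,k) = (\exp(k^{-1}\Delta) a)(z,k)$, which at order $\ell$ reads
\begin{equation*} \tilde{a}_\ell(z,z) = a_\ell(z) + \sum_{\substack{m+n=\ell \\ m \geq 1}} \frac{1}{m!} \Delta^m a_n(z). \end{equation*}
The hypothesis $\sigma_{\text{cov}}(A_k) = 0$ states that $\tilde{a}_\ell(z,z) = 0$ for every $\ell \geq 0$. A direct induction on $\ell$ yields $a_\ell = 0$ for all $\ell$: at order zero, $a_0 = \tilde{a}_0(z,z) = 0$, and if $a_0,\ldots,a_{\ell-1}$ vanish, the identity collapses to $a_\ell(z) = \tilde{a}_\ell(z,z) = 0$. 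Hence, by the definition of $\mathcal{S}^1$, the symbol $a(.,k)$ satisfies $|\partial_z^\alpha \partial_{\bar z}^\beta a(.,k)| \leq C_{N,\alpha,\beta} k^{-N} m^j$ for every $N \in \N$ and all multi-indices $\alpha,\beta$.

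For the second step, I would write the Schwartz kernel of $\Pi_k^0 M_{a(.,k)} \Pi_k^0$ as
\begin{equation*} \frac{k^2}{4\pi^2} \int_{\C} F_0^k(z_1, w)\, a(w,k)\, F_0^k(w, z_2)\, d\lambda(w), \end{equation*}
and use that $|F_0^k(z_1, w)| = \exp(-\tfrac{k}{2}|z_1 - w|^2)$. Completing the square in $w$ and performing the Gaussian integration produces the factor $\exp(-\tfrac{k}{4}|z_1 - z_2|^2)$; combined with the $O(k^{-\infty} m^j)$ estimate on $a$, this yields a pointwise bound $O(k^{-\infty}) \exp(-\tfrac{k}{4}|z_1 - z_2|^2)$, uniform on compact subsets. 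Derivatives in $z_1, \bar{z}_1, z_2, \bar{z}_2$ are obtained by differentiating $F_0^k$ under the integral; each such derivative produces only polynomial factors in $k$ and in $|z_i - w|$, which are absorbed by the Gaussian weight at the cost of a small decrease in the Gaussian constant. This yields (\ref{eq:negexp}) with some $C' > 0$ and negligible $R_k$.

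The main technical point I expect to handle carefully is precisely these derivative estimates: one has to verify that the polynomial growth allowed by the symbol class $\mathcal{S}^1_j$ and the polynomial factors produced by differentiating $F_0^k$ are genuinely swallowed by the Gaussian weight on every compact set, which asks for a clean stationary-phase style majorization rather than a purely pointwise bound.
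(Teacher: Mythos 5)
Your argument is correct, but it takes a genuinely different route from the paper's. The paper never returns to the contravariant symbol: it uses the structural fact that $A_k(z_1,z_2)$ is a holomorphic section of $L_0^k\boxtimes L_0^{-k}$ (holomorphic in $z_1$, anti-holomorphic in $z_2$). Differentiating (\ref{eq:SchToep}) against this holomorphy gives $\partial_{\bar z_1}\tilde a_\ell(z,z)=0=\partial_{z_2}\tilde a_\ell(z,z)$; combined with the hypothesis $\tilde a_\ell(z,z)=0$, all first derivatives of $\tilde a_\ell$ vanish on the diagonal, and iterating shows $\tilde a_\ell$ vanishes to infinite order along the diagonal. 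A Taylor/Gaussian-absorption lemma (the paper adapts lemma 1 of \cite{Cha1}) then yields directly that $\exp\left(-\frac{k}{4}|z_1-z_2|^2\right)\tilde a(z_1,z_2,k)$ is negligible, giving (\ref{eq:negexp}). Your first step is a clean and valid substitute: reading (\ref{eq:covcont}) order by order and inducting to get $a_\ell\equiv 0$ amounts to the formal invertibility of the covariant/contravariant correspondence, and it buys you the stronger fact that the contravariant symbol vanishes as well. For your second step, however, rather than completing the square by hand and handling the derivative bounds you flag as delicate, it is shorter to re-apply Lemma \ref{lm:kernelToep} to the symbol $a(\cdot,k)$, which you have now shown to be a negligible element of $\mathcal{S}_j^1$: the stationary-phase construction of $\tilde a$ from $a$ is order-preserving, so $\tilde a(\cdot,\cdot,k)$ is negligible in $\mathcal{S}_j^2$, and since the prefactor $\exp\left(-\frac{k}{2}\left(|z_1|^2+|z_2|^2-2z_1\bar z_2\right)\right)$ in (\ref{eq:SchToep}) has modulus $\exp\left(-\frac{k}{2}|z_1-z_2|^2\right)$, one absorbs the polynomial factors coming from $\tilde a$ and from differentiating the exponential by sacrificing part of the decay constant, and (\ref{eq:negexp}) follows. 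That inherits the derivative estimates from the lemma instead of re-deriving them. Both proofs are correct; the paper's is shorter because it exploits holomorphy and stays entirely on the covariant side, while yours is more elementary but essentially retraces part of the proof of Lemma \ref{lm:kernelToep}.
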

As a corollary of lemmas \ref{lm:kernelToep} and \ref{lm:covBarg}, we obtain the stability under composition of the set of Toeplitz operators. 
\begin{cor}
Let $A_k \in \mathcal{T}_j$ and $B_k \in \mathcal{T}_{j'}$ be two Toeplitz operators. Then $C_k = A_k B_k$ belongs to $\mathcal{T}_{j+j'}$; more precisely, its contravariant symbol is given by
\begin{equation} \sigma_{\text{cont}}(C_k)(z) = \left(\exp\left( -\hbar \frac{\partial}{\partial z_1} \frac{\partial}{\partial \bar{z}_2} \right) \sigma_{\text{cont}}(A_{k})(z_1) \sigma_{\text{cont}}(B_{k})(z_2) \right)_{|z_1=z_2=z}, \label{eq:prodcont}\end{equation}
in the same sense as in lemma \ref{lm:kernelToep}.
\label{cor:compoToep}\end{cor}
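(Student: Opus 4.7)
The plan is to compute the Schwartz kernel of $C_k = A_k B_k$ directly via
\[ C_k(z_1, z_3) = \int_{\C} A_k(z_1, z_2) B_k(z_2, z_3) \, d\lambda(z_2), \]
using the description of each kernel provided by Lemma~\ref{lm:kernelToep}. After multiplying the two leading Gaussian factors, the combined exponent $-\tfrac{k}{2}(|z_1|^2 + 2|z_2|^2 + |z_3|^2 - 2z_1\bar{z}_2 - 2z_2\bar{z}_3)$ can be rearranged by completing the square in $z_2$ as
\[ -\tfrac{k}{2}\bigl(|z_1|^2 + |z_3|^2 - 2z_1\bar{z}_3\bigr) \;-\; k(z_2 - z_1)(\bar{z}_2 - \bar{z}_3). \]
The first piece is precisely the Bergman factor expected for a kernel of the form \eqref{eq:SchToep} relating $z_1$ and $z_3$; the second is a complex Gaussian in the integration variable whose critical point in the sense of stationary phase sits at $z_2 = z_1$, $\bar{z}_2 = \bar{z}_3$.

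The next step is a complex Laplace expansion of the remaining integral. Setting $w = z_2 - z_1$, Taylor-expanding $\tilde{a}(z_1, z_1+w, k)\,\tilde{b}(z_1+w, z_3, k)$ in $(w, \bar{w})$ at the origin and integrating term by term against $e^{-kw(\bar{w} + \bar{z}_1 - \bar{z}_3)}$ produces the explicit asymptotic expansion
\[ \tilde{c}(z_1, z_3, k) \sim \exp\bigl(k^{-1}\partial_{z_2}\partial_{\bar{z}_2}\bigr)\bigl(\tilde{a}(z_1, z_2, k)\,\tilde{b}(z_2, z_3, k)\bigr)\Big|_{z_2 = z_1,\;\bar{z}_2 = \bar{z}_3} \]
for the amplitude, understood in the formal-series sense of Lemma~\ref{lm:kernelToep}. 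The defining estimates of $\mathcal{S}^1_j$ and $\mathcal{S}^1_{j'}$ translate into uniform control of the Taylor remainder and give $\tilde{c}(\cdot, \cdot, k) \in \mathcal{S}^2_{j+j'}$, whence $C_k \in \mathcal{T}_{j+j'}$. The contributions of the two smoothing pieces in \eqref{eq:SchToep}, together with their cross and double products, combine into an error of the form \eqref{eq:negexp}: the decay factor $e^{-Ck|z_1 - z_2|^2}$ (or $e^{-Ck|z_2 - z_3|^2}$) dominates the companion Gaussian, and integration in $z_2$ leaves an $O(k^{-\infty})$ profile times $e^{-C'k|z_1 - z_3|^2}$.

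To obtain the contravariant version \eqref{eq:prodcont}, I would run the same Laplace argument once more, applied directly to $A_k = \mathrm{Op}(a)$ and $B_k = \mathrm{Op}(b)$, to produce the analogous off-diagonal identities expressing $\tilde{a}$ (resp.\ $\tilde{b}$) in terms of $a$ (resp.\ $b$) via a formal exponential of a mixed second-order derivative; restricted to the diagonal this is exactly \eqref{eq:covcont}. Substituting these identities into $\tilde{c}(z,z,k)$, combining the three resulting exponentials of differential operators, and inverting $\exp(k^{-1}\Delta)$ on the diagonal by means of \eqref{eq:covcont} identifies the contravariant symbol of $C_k$ with the right-hand side of \eqref{eq:prodcont}. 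Lemma~\ref{lm:covBarg} then ensures that the difference between $C_k$ and the Toeplitz operator built from this contravariant symbol has a kernel of the form \eqref{eq:negexp}. I expect the main technical obstacle to be the Laplace expansion itself: it must be carried out uniformly in $(z_1, z_3)$, with a remainder controlled despite the polynomial growth allowed by the classes $\mathcal{S}^d$, and with a quantitative tail bound absorbing the Gaussian $e^{-kw\bar{w} - kwc}$ in the non-stationary region — exactly the bookkeeping the paper defers to its appendix.
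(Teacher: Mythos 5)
Your proposal is correct and follows essentially the same route as the paper: write $C_k$ as the composition integral, perform a complex stationary phase/Laplace expansion of the merged Bergman phase to identify the new amplitude $\tilde c$, then pass to contravariant symbols by combining the exponentials of holomorphic Laplacian/mixed-derivative operators and invoking Lemma~\ref{lm:covBarg} to absorb the discrepancy into a remainder of the form~\eqref{eq:negexp}. The only cosmetic variation is that you organize the amplitude expansion around the complexified critical point $(z_2,\bar z_2)=(z_1,\bar z_3)$ with off-diagonal exponential identities for $\tilde a,\tilde b$, whereas the paper restricts to the diagonal, introduces $\breve a(z,w)=\tilde a(z,z+w)$, $\breve b(z,w)=\tilde b(z+w,z)$, and works with $\check c=(\exp(k^{-1}\Delta_w)\breve a\,\breve b)|_{w=0}$ before untangling the operators — the same computation in slightly different bookkeeping.
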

Define the \textit{normalized symbol} as in the compact case:
\begin{equation*} \sigma_{\text{norm}} = \left( Id + \frac{\hbar}{2} \Delta \right) \sigma_{\text{cont}}. \end{equation*}
From formula (\ref{eq:prodcont}), we find that $\sigma_{\text{norm}}(A_{k} B_{k}) =a_{0} b_{0} + \frac{\hbar}{2 i} \left\{ a_0,b_0 \right\} + O(\hbar^2)$, as expected.
\begin{dfn} A Toeplitz operator $A_k \in \mathcal{T}_j$ is said to be \textit{elliptic at infinity} if there exists some $c > 0$ such that for $z$ in $\C$, $\left| \sigma_{\text{cont}}(A_k)(z) \right| \geq c (1 + |z|^2)^\frac{j}{2}$. \end{dfn}
Adapting proposition $12$ of \cite{Cha1} and theorem $39$ of \cite{CdV2}, one can show the following:
\begin{prop}[Functional calculus] If $A_k$ belongs to $\mathcal{T}_j$ for some $j \geq 1$, is essentially self-adjoint and elliptic at infinity and if $\eta:\R \rightarrow \R$ is a compactly supported $\classe{\infty}$ function, then $\eta(A_k)$ belongs to $\mathcal{T}_{j'}$ for every $j' < 0$. \label{prop:funcBarg}\end{prop}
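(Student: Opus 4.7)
The strategy is to realise $\eta(A_k)$ through the Helffer--Sjöstrand formula
\begin{equation*}
\eta(A_k) \;=\; -\frac{1}{\pi} \int_{\C} \bar\partial \tilde\eta(w)\,(A_k - w)^{-1}\, dL(w),
\end{equation*}
where $\tilde\eta$ is a compactly supported almost-holomorphic extension of $\eta$ satisfying $|\bar\partial\tilde\eta(w)| = O(|\text{Im}\, w|^{N})$ for every $N$, and $dL$ is the Lebesgue measure. Essential self-adjointness guarantees that $\eta(A_k)$ is well defined and that the integral converges; the task reduces to approximating the resolvent by a Toeplitz operator with quantifiable control in $|\text{Im}\, w|$.

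The key analytic input is a Toeplitz parametrix for $A_k - w$. For $w \in \C \setminus \R$ I would build, inductively on powers of $\hbar$, an operator $B_k(w) \in \mathcal{T}_{-j}$ using the composition formula~(\ref{eq:prodcont}). The principal contravariant symbol is $(a_0 - w)^{-1}$, which by ellipticity at infinity satisfies $|a_0(z) - w| \geq c(1 + |z|^2)^{j/2}$ outside a compact set and thus belongs to $\mathcal{S}^1_{-j}$; each subsequent correction is determined from (\ref{eq:prodcont}) so that $B_k(w)(A_k - w) = \text{Id} + R_k(w)$, with $R_k(w)$ having a kernel of the form~(\ref{eq:negexp}) and seminorms controlled by some fixed polynomial in $|\text{Im}\, w|^{-1}$. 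Combining this with the trivial bound $\|(A_k - w)^{-1}\| \leq |\text{Im}\, w|^{-1}$ gives $(A_k - w)^{-1} = B_k(w) + E_k(w)$, where $E_k(w)$ is negligible with polynomial loss in $|\text{Im}\, w|^{-1}$.

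Substituting into the Helffer--Sjöstrand formula, the $E_k(w)$-contribution integrates to a kernel of the form~(\ref{eq:negexp}) because the infinite-order vanishing of $\bar\partial\tilde\eta$ on the real axis absorbs every polynomial blow-up. The main contribution $-\frac{1}{\pi}\int \bar\partial\tilde\eta(w) B_k(w)\, dL(w)$ is a Toeplitz operator whose contravariant symbol is obtained, order by order, by integrating the expansion of $\sigma_{\text{cont}}(B_k(w))$ against $-\frac{1}{\pi}\bar\partial\tilde\eta(w)$. By the Cauchy--Pompeiu formula the leading term equals $\eta \circ a_0$, and every subsequent term is a finite combination of derivatives of $(a_0 - w)^{-1}$ integrated against $\bar\partial\tilde\eta$, hence a smooth function supported in $a_0^{-1}(\text{supp}\,\eta)$. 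By ellipticity, $a_0(z) \to \infty$ as $|z| \to \infty$, so this preimage is compact in $\C$; the symbol is thus compactly supported and lies in $\mathcal{S}^1_{j'}$ for \emph{every} integer $j'$, which in particular yields $\eta(A_k) \in \mathcal{T}_{j'}$ for every $j' < 0$.

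The main obstacle is the bookkeeping of the $|\text{Im}\, w|^{-1}$ losses in the parametrix construction: each iteration of the symbolic Neumann scheme introduces extra factors of $(a_0 - w)^{-1}$ and extra derivatives, so one has to certify that the resulting seminorms of $R_k(w)$ and of the symbol of $B_k(w)$ grow only polynomially in $|\text{Im}\, w|^{-1}$, with power independent of the order of the expansion. This is the standard Helffer--Sjöstrand gymnastics carried out in~\cite{CdV2} for $\hbar$-pseudodifferential operators, and it transfers to the present setting once the composition rule of corollary~\ref{cor:compoToep} and the estimates built into the definition of $\mathcal{S}^1_j$ are invoked systematically, as in lemma~\ref{lm:kernelToep}.
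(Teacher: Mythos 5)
The paper does not spell out a proof of this proposition; it only points to proposition 12 of \cite{Cha1} and theorem 39 of \cite{CdV2}, asking the reader to adapt them. Your Helffer--Sj\"ostrand route is exactly what theorem 39 of \cite{CdV2} does for $\hbar$-pseudodifferential operators, so your proposal is in substance the proof the paper is gesturing at, transposed to the Bargmann--Toeplitz calculus. The outline is sound: the parametrix $B_k(w)$ with contravariant symbol $(a_0-w)^{-1} + O(\hbar)$ is well defined in $\mathcal{S}^1_{-j}$ thanks to the ellipticity-at-infinity hypothesis, the Cauchy--Pompeiu identity $-\frac{1}{\pi}\int \bar\partial\tilde\eta(w)(a_0-w)^{-1}dL(w)=\eta(a_0)$ gives the leading symbol, and each correction term, being a polynomial in $(a_0-w)^{-1}$ with coefficients involving derivatives of $a$, integrates against $\bar\partial\tilde\eta$ to a derivative of $\eta$ evaluated at $a_0$ and is therefore supported in the compact set $a_0^{-1}(\mathrm{supp}\,\eta)$. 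A compactly supported smooth symbol lies in $\mathcal{S}^1_{j'}$ for every $j'\in\Z$, which gives the conclusion.

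Two minor caveats. First, your claim that the polynomial power of $|\mathrm{Im}\,w|^{-1}$ can be chosen ``independent of the order of the expansion'' is stronger than what the argument actually delivers or requires; in the standard bookkeeping the degree of blow-up grows linearly with the order, and this is harmless because $\bar\partial\tilde\eta$ vanishes to infinite order on $\R$, so each fixed order of the $k$-expansion still converges. Second, you should be explicit that ellipticity at infinity together with real-valuedness of $a_0$ (essential self-adjointness) forces $a_0(z)\to\pm\infty$, which is what ensures $a_0^{-1}(\mathrm{supp}\,\eta)$ is compact; without that observation the support argument is incomplete. Neither point affects the correctness of the overall strategy.
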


\section{Fourier integral operators}

The aim of this section is to construct microlocally unitary operators between $\Hil_{k}$ and $\Barg_{k}$, given a local symplectomorphism $\chi$ from $M$ to $\R^2$. In \cite{BouGui}, Boutet de Monvel and Guillemin introduced Fourier integral operators in the homogeneous Toeplitz setting. In the semiclassical Toeplitz theory, such operators between compact manifolds have been used by Charles \cite{Cha4,Cha5}, but some difficulties arise when dealing with a non compact manifold. Nevertheless, the ideas, based on Lagrangian sections, are very similar.

Let $\chi : \Omega_{1} \subset M \rightarrow \Omega_{2} \subset \R^2$ be a symplectomorphism between the open sets $\Omega_{1}$ and $\Omega_{2}$. Then the graph 
\begin{equation*} \Lambda_{\chi} = \left\{ (u,\chi(u)); u \in \Omega_{1} \right\} \subset \Omega_{1} \times \Omega_{2} \end{equation*}
of $\chi$ is a Lagrangian submanifold of the product $M \times \C^{op}$. As in the previous section, let $t$ be the section of $L_{0} \rightarrow \R^2$ with constant value 1. By definition of the connection on $L_{0}$, we have $\nabla^0 t = \frac{1}{i} \alpha \otimes t$, where $\alpha$ is the primitive of $\omega_{0}$ given by $\alpha_{u}(v) = \frac{1}{2} \omega_{0}(u,v)$. The following lemma is elementary.
\begin{lm} Taking $\Omega_{1}$ smaller if necessary, we can find a local gauge $s$ of $L \rightarrow \Omega_{1}$ such that $\nabla s = \frac{1}{i} \chi^* \alpha \otimes s$.  \end{lm}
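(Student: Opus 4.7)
The plan is to construct $s$ by perturbing an arbitrary local non-vanishing section of $L$ by a $U(1)$-gauge transformation chosen so that the resulting connection $1$-form is exactly $\chi^*\alpha$. First, I would shrink $\Omega_1$ to a contractible coordinate ball, small enough that $L_{|\Omega_1}$ is trivializable: pick any non-vanishing smooth section $s_0$ of $L$ over $\Omega_1$. Because $L$ has rank one, there is a unique real $1$-form $\beta \in \Omega^1(\Omega_1,\R)$ such that $\nabla s_0 = \frac{1}{i} \beta \otimes s_0$. Computing $\nabla^2 s_0 = \frac{1}{i} d\beta \otimes s_0$ and invoking the prequantization condition (curvature equal to $\frac{1}{i}\omega$) forces $d\beta = \omega$ on $\Omega_1$.

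Next, I would compare $\beta$ with $\chi^*\alpha$. Since $\alpha$ is a primitive of $\omega_0$ on $\R^2$ (the formula $\alpha_u(v) = \tfrac{1}{2}\omega_0(u,v)$ gives $d\alpha = \omega_0$), and since $\chi$ is a symplectomorphism, one has $d(\chi^*\alpha) = \chi^* \omega_0 = \omega$. Consequently the real $1$-form $\gamma := \beta - \chi^*\alpha$ is closed on the contractible set $\Omega_1$, and the Poincaré lemma provides a smooth real function $f \in \classe{\infty}{(\Omega_1,\R)}$ with $df = \gamma$.

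Finally, I would set $s := e^{if} s_0$, which is again a non-vanishing smooth section of $L$ on $\Omega_1$ since multiplication by $e^{if}$ is a unitary change of gauge. A Leibniz-rule computation gives
\begin{equation*}
\nabla s \;=\; i\,(df)\, e^{if}\otimes s_0 \,+\, e^{if}\cdot \tfrac{1}{i}\beta \otimes s_0 \;=\; \tfrac{1}{i}(\beta - df)\otimes s \;=\; \tfrac{1}{i}\,\chi^*\alpha\otimes s,
\end{equation*}
where in the middle equality I used $i = -\tfrac{1}{i}$ and in the last equality the defining property $df = \gamma = \beta - \chi^*\alpha$. There is no genuine obstacle here: the statement is essentially a Poincaré-lemma gauge change, the only care required being the sign-tracking showing that a $U(1)$ twist by $e^{if}$ adds $-df$ (and not $+df$) to the connection $1$-form, which is precisely why we solve $df = \beta - \chi^*\alpha$ rather than the opposite sign.
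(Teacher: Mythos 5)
Your proof is correct; the paper states this lemma without proof, calling it ``elementary,'' and your Poincar\'e-lemma argument (compute the local connection form, note that $\chi^*\alpha$ has the same curvature, integrate the closed difference, and twist by $e^{if}$) is precisely the intended route. The only small imprecision is the claim that $\beta$ is automatically real-valued: this holds only after first normalizing $s_0$ to unit norm with respect to the Hermitian metric (which is harmless on $\Omega_1$), or alternatively one can drop the reality claim and run the same computation with a complex-valued $\beta$ and complex primitive, since the final identity $\nabla s = \tfrac{1}{i}\chi^*\alpha\otimes s$ only uses $d(\beta - \chi^*\alpha) = 0$.
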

We consider the section $t_{\Lambda_{\chi}}$ of $L \boxtimes L^{-1}_{0}$ over $\Lambda_{\chi}$ given by
\begin{equation*} t_{\Lambda_{\chi}}(u,\chi(u)) = s(u) \otimes t^{-1}(\chi(u)).  \end{equation*}
Thanks to proposition $2.1$ of \cite{Cha4}, we can build a local section $E$ of $L \boxtimes L^{-1}_{0} \rightarrow \Omega_{1} \times \Omega_{2}^{op}$ such that
\begin{itemize}
\item $E$ is equal to $t_{\Lambda_{\chi}}$ on $\Lambda_{\chi}$,
\item for every holomorphic vector field $Z$ on $\Omega_{1} \times \Omega_{2}^{op}$, the covariant derivative of $E$ with respect to $\bar{Z}$ is zero modulo a section vanishing to infinite order along $\Lambda_{\chi}$,
\end{itemize}
and this section is unique modulo a section vanishing to infinite order along $\Lambda_{\chi}$.
Furthermore, we can always assume that $\|E\| < 1$ outside $\Lambda_{\chi}$, and we will make this assumption until the end of this article. 

We consider a sequence of functions of $\classe{\infty}{(\Omega_{1} \times \Omega_{2}^{op})}$, which admits an asymptotic expansion $\sum_{\ell \geq 0} k^{-\ell} a_{\ell}$ for the $\classe{\infty}{}$ topology whose coefficients are all supported in a fixed (independent of $k$) compact set $C \subset \Omega_{1} \times \Omega_{2}$.
Let $S_{k}$ be the local section of $(L^k \otimes K) \boxtimes L_{0}^{-k} $ given by
\begin{equation*} S_{k}(u,v) = \frac{k}{2 \pi} E^k(u,v) a(u,v,k),  \end{equation*}
and consider the operator $S_{k}$ defined on $ \Gamma(\C,L_{0}^k)$ by
\begin{equation*} (S_{k}\phi)(u) = \int_{\R^2} S_{k}(u,v).\phi(v) \ d\lambda(v),\end{equation*}
which makes sense since $S_{k}(.,.)$ vanishes outside $\Omega_{1} \times \Omega_{2}$.
\begin{prop} The operator $R_{k} = S_{k} \Pi_{k}^0$ maps $ \Barg_{k}$ into $\Gamma(M,L^k \otimes K)$. \end{prop}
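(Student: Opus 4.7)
The plan is to observe that the operator $R_k = S_k \Pi_k^0$ has a kernel which is essentially compactly supported, so its output on any reasonable input is automatically a smooth, compactly supported section. First I would note that for $\phi \in \Barg_k$ we have $\Pi_k^0 \phi = \phi$, so the proposition reduces to showing that $S_k \phi$ is a well-defined smooth section of $L^k \otimes K$ on (an open subset of) $M$ which extends by zero to a smooth global section. A key observation here is that every $\phi \in \Barg_k$ is smooth on $\R^2$, being the product of a holomorphic function with the Gaussian $\psi^k$.

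Next I would check convergence of $(S_k \phi)(u) = \int_{\R^2} S_k(u,v) \cdot \phi(v) \, d\lambda(v)$. Since the coefficients $a_\ell$ of the asymptotic expansion of $a(\cdot,\cdot,k)$ are all supported in the fixed compact set $C \subset \Omega_1 \times \Omega_2$, we may arrange $a(\cdot,\cdot,k)$ itself to be supported in $C$ (by Borel summation combined with a cutoff, an operation which only introduces an $O(k^{-\infty})$ error). Consequently $S_k(u,\cdot)$ is smooth and compactly supported in $v$, and the integrand $S_k(u,v)\cdot \phi(v)$ is continuous and compactly supported in $v$, so the integral exists and is finite for every $u \in \Omega_1$.

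Then I would verify smoothness in $u$ by the standard argument of differentiation under the integral sign, which applies here because $S_k(u,v)$ is smooth in both variables with compact support. For any $u$ outside the (compact) first projection of $C$, the kernel vanishes identically in $v$, so $(S_k \phi)(u) = 0$ there; we therefore extend $S_k \phi$ by zero outside $\Omega_1$ to obtain a smooth section on all of $M$, compactly supported in $\Omega_1$. The fact that the result takes values in $L^k \otimes K$ rather than merely in $L^k$ comes from implicitly tensoring with a local trivializing section of $K$ on $\Omega_1$, which is tacit in the convention that $S_k$ is a section of $(L^k \otimes K) \boxtimes L_0^{-k}$.

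The main obstacle is essentially cosmetic rather than analytic: tracking the bundle factors correctly, and verifying convergence of the integral for $\phi \in \Barg_k$, whose elements need not be bounded pointwise on $\R^2$ (as opposed to the easy case of compactly supported $\phi$). Both issues are dispatched by the compact support of $S_k$ in $v$ together with the pointwise smoothness of Bargmann sections, so no hard analytic estimate is needed at this stage; more delicate estimates (for instance involving the factor $\|E\| < 1$ off $\Lambda_\chi$) will only become necessary when one later studies compositions $R_k^* R_k$ or microlocal unitarity.
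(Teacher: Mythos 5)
Your proof is correct and takes a genuinely different — and in fact more elementary — route than the paper's. You exploit the identity $\Pi_k^0\phi = \phi$ for $\phi \in \Barg_k$, reducing at once to the operator $S_k$ alone; since (after cutting off the Borel sum) the kernel $S_k(u,v)$ is supported in the fixed compact set $C \subset \Omega_1 \times \Omega_2$, convergence of the integral, smoothness in $u$ by differentiation under the integral sign, and smooth extension by zero off a compact subset of $\Omega_1$ are all immediate. The paper instead computes the Schwartz kernel of the composite $R_k = S_k\Pi_k^0$, writes
\begin{equation*}
R_k(u,v) = \int_{p_2(C)} S_k(u,w)\,\Pi_k^0(w,v)\,dw,
\end{equation*}
and uses $\|E\|\leq 1$ together with the Gaussian factor in $\Pi_k^0(w,v)$ to prove that $R_k(u,\cdot)$ and all its $u$-derivatives decay like $\exp\bigl(-\tfrac{k}{4}\|v\|^2 + \tfrac{kr}{2}\|v\|\bigr)$. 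What the paper's estimate buys is a stronger conclusion: $S_k\Pi_k^0$ maps the whole of $L^2(\R^2, L_0^k)$ (not just the closed subspace $\Barg_k$) into smooth sections, which is a useful fact for the subsequent stationary-phase analysis of $T_k = \Pi_k R_k$; your argument, by contrast, is confined to $\Barg_k$ and so proves exactly the stated proposition but nothing more. Your closing remark — that the hypothesis $\|E\|<1$ off $\Lambda_\chi$ is not needed here — is accurate for your route, but note that the paper does use $\|E\|\leq 1$ already in this proof precisely because it pursues the kernel estimate rather than the shortcut via $\Pi_k^0\phi=\phi$. One small clarification worth making explicit: as you observe, $a(\cdot,\cdot,k)$ must itself (not merely its asymptotic coefficients) be taken supported in $C$, and the paper uses this tacitly when it integrates over $p_2(C)$.
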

\begin{proof} We must show that if $\varphi_{k}$ is a smooth square integrable section of $L_{0}^k  \rightarrow \C$, then $S_{k} \Pi_{k}^0  \varphi_{k}$ is a smooth section of $L^k \otimes K \rightarrow M$. It is enough to show that the Schwartz kernel of $S_{k} \Pi_{k}^0$ and its derivatives with respect to the first variable are rapidly decreasing in the second variable.
Let $R_{k}$ be this kernel; one has 
\begin{equation*} R_{k}(u,v) = \int_{p_{2}(C)} S_{k}(u,w).\Pi_{k}^0(w,v) dw, \end{equation*}
with $p_{2}$ the projection from $M \times \C$ to $\C$. So
\begin{equation*}  R_{k}(u,v) = \left( \frac{k}{2 \pi} \right)^2 \int_{p_{2}(C)} f(u,v,w,k)  E^k(u,w).t^k(w)\otimes t^{-k}(v) dw
\end{equation*}
with $f(u,v,w,k) = a(u,w,k) \exp \left(-\frac{k}{4}\|w-v\|^2 - \frac{ik}{2} \omega_{0}(w,v) \right)$.
This implies the estimate
\begin{equation*} \|R_{k}(u,v)\| \leq \left( \frac{k}{2 \pi} \right)^2 \int_{p_{2}(C)} | a(u,w,k) |   \exp \left(-\frac{k}{4}\|w-v\|^2 \right) \| E^k(u,w) \| dw. \end{equation*}
Since $\|E\| \leq 1$ and $a(.,.,k)$ is bounded by some constant $c_{k} > 0$, this yields
\begin{equation*} \|R_{k}(u,v)\| \leq \left( \frac{k}{2 \pi} \right)^2 c_{k} \int_{p_{2}(C)} \exp \left(-\frac{k}{4}\|w-v\|^2 \right) dw.  \end{equation*}
Using $\|w-v\|^2  \geq  \| v \|^2 - 2  \| w \| \| v \|$, we obtain
\begin{equation*} \|R_{k}(u,v)\| \leq \left( \frac{k}{2 \pi} \right)^2 c_{k} \exp \left(-\frac{k}{4}  \| v \|^2 \right) \int_{p_{2}(C)} \exp \left(\frac{k}{2} \| w \| \| v \| \right) dw.  \end{equation*}
Finally, an upper bound for the integral that appears in this inequality is $\pi r^2 \exp \left( \frac{k}{2} r \| v \| \right)$ where $p_{2}(C)$ is included in the closed ball of radius $r$ and centered at the origin. This allows us to conclude that
\begin{equation*} \|R_{k}(u,v)\| \leq \left( \frac{k}{2 \pi} \right)^2 c_{k} \pi r^2  \exp \left( -\frac{k}{4}  \| v \|^2 + \frac{kr}{2}  \| v \| \right). \end{equation*}

The same kind of estimates hold for the successive derivatives of $R_{k}$ with respect to $u$; we prove them by differentiating under the integral sign.
\end{proof}
Unfortunately, $R_{k}$ has no reason to map holomorphic sections to holomorphic sections; to fix this problem, we set $T_{k} = \Pi_{k} R_{k}$; defined in this way, $T_{k}$ is an operator from $\Barg_{k}$ to $\Hil_{k}$.
\begin{prop} The Schwartz kernel of $T_{k}$ reads
\begin{equation} T_{k}(u,v) = \frac{k}{2 \pi} E^k(u,v) b(u,v,k) + O(k^{-\infty}) \label{eq:FIO}\end{equation}
with $b(.,.,k)$ a sequence of smooth functions which admits an asymptotic expansion $b(.,.,k) = \sum_{\ell \geq 0} k^{-\ell} b_{\ell}$ for the $\classe{\infty}{}$ topology satisfying
\begin{equation*} b_{0}(u,\chi(u),k) = \mu(u) a_{0}(u,\chi(u),k) \end{equation*}
where $\mu$ is a smooth, nowhere vanishing function which depends only on the section $E$.
\end{prop}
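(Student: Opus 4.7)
The plan is to compute $T_{k}(u,v)$ by two successive applications of a complex stationary--phase argument for Bergman--type phases, in the spirit of Boutet de Monvel--Sj\"ostrand and of Charles' paper \cite{Cha4}. Writing
\begin{equation*}
T_{k}(u,v)=\int_{M}\!\!\int_{\R^{2}} \Pi_{k}(u,u')\,S_{k}(u',w)\,\Pi_{k}^{0}(w,v)\ d\lambda(w)\,d\mu_{M}(u'),
\end{equation*}
I would first analyse the inner $w$-integral, which is the kernel of $R_{k}=S_{k}\Pi_{k}^{0}$. Using the explicit formula for $\Pi_{k}^{0}$, this reads
\begin{equation*}
R_{k}(u',v)=\left(\frac{k}{2\pi}\right)^{2}\!\!\int_{\R^{2}} a(u',w,k)\,E^{k}(u',w)\,F_{0}^{k}(w,v)\ d\lambda(w).
\end{equation*}
Because $\|E\|\leq 1$ with equality only on $\Lambda_{\chi}$, $\|F_{0}\|\leq 1$ with equality only on the diagonal, and $E$, $F_{0}$ are both almost--holomorphic off their respective diagonals, the modulus of the integrand (with $(u',v)$ viewed as parameters close to $\Lambda_{\chi}$) attains a non-degenerate maximum at the unique critical point $w=\chi(u')$. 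A complex stationary--phase argument then produces an expansion
\begin{equation*}
R_{k}(u',v)=\frac{k}{2\pi}\,\tilde{E}^{k}(u',v)\,r(u',v,k)+O(k^{-\infty}),
\end{equation*}
where $\tilde{E}$ is a local section of $L\boxtimes L_{0}^{-1}$ that coincides with $t_{\Lambda_{\chi}}$ on $\Lambda_{\chi}$, is $\bar{\partial}$-flat to infinite order there, has norm $<1$ outside, and where $r$ admits a full $\classe{\infty}{}$ asymptotic expansion. By the uniqueness part of proposition~$2.1$ of \cite{Cha4}, $\tilde{E}$ equals $E$ modulo a section vanishing to infinite order along $\Lambda_{\chi}$, so we may substitute $E^{k}$ for $\tilde{E}^{k}$ at the cost of an $O(k^{-\infty})$ remainder.

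Next, I would compose with $\Pi_{k}$ using the Boutet de Monvel--Sj\"ostrand off-diagonal description of the Bergman projector, $\Pi_{k}(u,u')=(k/2\pi)\,E_{\Pi}^{k}(u,u')+O(k^{-\infty})$, where $E_{\Pi}$ is a section of $(L^{k}\otimes K)\boxtimes (L^{k}\otimes K)^{-1}$ with the analogous properties (equal to the identity on the diagonal of $M\times M^{op}$, almost--holomorphic there, subunitary elsewhere). A second complex stationary phase in the variable $u'\in M$, whose critical point is $u'=u$, then produces a phase factor that is equal to $t_{\Lambda_{\chi}}$ on $\Lambda_{\chi}$, $\bar{\partial}$-flat to infinite order there and subunitary elsewhere. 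By the same uniqueness statement this phase coincides with $E$ modulo a section flat along $\Lambda_{\chi}$, giving the claimed form (\ref{eq:FIO}) with $b(.,.,k)$ admitting a full $\classe{\infty}{}$ asymptotic expansion.

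The leading symbol $b_{0}$ along $\Lambda_{\chi}$ is obtained by tracking the Gaussian prefactors produced by the two stationary--phase computations: each contributes a determinant-of-Hessian factor depending only on the second transverse jet of $\|E\|$ at $\Lambda_{\chi}$ and on the fixed Bergman factors. Their product is a smooth, nowhere vanishing function $\mu(u)$, nonvanishing because the two Hessians are non-degenerate (a consequence of K\"ahler positivity), and multiplying by the value of the amplitude at the critical point yields $b_{0}(u,\chi(u))=\mu(u)\,a_{0}(u,\chi(u))$. The main obstacle is to justify the first stationary phase uniformly on the non-compact factor $\R^{2}$ so as to control the tail as $O(k^{-\infty})$; this is handled by exploiting the exponential decay of $F_{0}^{k}$ away from the diagonal, exactly as in the proof of the previous proposition, which effectively reduces the $w$-integral to a fixed compact subset of $\R^{2}$.
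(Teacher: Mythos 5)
Your proposal takes essentially the same route as the paper, which simply refers the reader to the stationary-phase argument of Proposition~4.2 in \cite{Cha4}: compose the three kernels, apply complex stationary phase (first in the non-compact $w$-variable using the decay of $F_0^k$, then in $u'$), and identify the resulting phase section with $E$ \emph{via} the uniqueness clause of Proposition~2.1 of \cite{Cha4}. One small imprecision worth noting: the Boutet de Monvel--Sj\"ostrand description of $\Pi_k$ is $(k/2\pi)E_\Pi^k(u,u')\,a_\Pi(u,u',k)+O(k^{-\infty})$ with a non-trivial amplitude $a_\Pi$, not just the phase factor you wrote; since $a_\Pi$ has non-vanishing leading term this only feeds into $\mu$ and does not affect the conclusion.
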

The proof is the same as the proof of proposition $4.2$ of \cite{Cha4}; it is based on an application of the stationary phase lemma. 

An operator $V_{k}:\Barg_{k} \rightarrow \Hil_{k}$ admitting a Schwartz kernel of the form of equation (\ref{eq:FIO}) and satisfying $\Pi_{k} V_{k} \Pi_{k}^0 = V_{k} $ will be called a \textit{Fourier integral operator} associated to the sequence $b(.,.,k)$; let us denote by $\mathcal{FI}(\chi)$ the set of such operators. We define the \textit{full symbol} map
\begin{equation*} \sigma: \mathcal{FI}(\chi)  \rightarrow  \classe{\infty}{(M)}[[\hbar]], \quad V_{k}  \mapsto  \sum_{\ell \geq 0} \hbar^{\ell} b_{\ell}(u,\chi(u)). \end{equation*} 
One can show that its kernel consists of smoothing operators. In the same way, we define $\mathcal{FI}(\chi^{-1}) : \Hil_{k} \rightarrow \Barg_{k}$. The following property is another application of the stationary phase lemma.
\begin{prop}
Let $R_{k} \in \mathcal{FI}(\chi)$ and $S_{k} \in \mathcal{FI}(\chi^{-1})$ with respective principal symbols $r_{0}(u,\chi(u))$ and $s_{0}(v,\chi^{-1}(v))$. Then there exists a smooth nowhere vanishing function $\nu: \R^2 \rightarrow \R$ such that for every Toeplitz operator $T_{k}$ on $M$ with principal symbol $t_{0}$, $S_{k} T_{k} R_{k}$ is a Toeplitz operator on $\R^2$ with principal symbol equal to
\begin{equation*}  \nu(v)  s_{0}(v,\chi^{-1}(v)) t_{0}(\chi^{-1}(v)) r_{0}(\chi^{-1}(v),v) \end{equation*}
on $\Omega_{2}$.
\end{prop}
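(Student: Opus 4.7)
The strategy is to write the Schwartz kernel of $S_k T_k R_k$ as an oscillatory integral over $M\times M$ and reduce it by (complex) stationary phase. With the conventions set up earlier,
\begin{equation*}
(S_k T_k R_k)(v',v) \;=\; \int_{M\times M} S_k(v',u')\,T_k(u',u)\,R_k(u,v)\;\mu_M(u')\,\mu_M(u),
\end{equation*}
where $R_k(u,v) = \frac{k}{2\pi}E_1^k(u,v)\,b_R(u,v,k)+O(k^{-\infty})$ and $S_k(v',u') = \frac{k}{2\pi}E_2^k(v',u')\,b_S(v',u',k)+O(k^{-\infty})$ with $E_1$ (resp. $E_2$) the almost-holomorphic section adapted to $\chi$ (resp. $\chi^{-1}$) furnished by the previous construction, and $T_k = \Pi_k M_{f(\cdot,k)}\Pi_k$ has a kernel of analogous Bergman type with a phase section $E_M^k$ satisfying $\|E_M\|\le 1$ with equality exactly on the diagonal of $M\times M$.

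First, I would perform the microlocal localisation: since $\|E_1\|,\|E_2\|,\|E_M\|\le 1$ with equality exactly on $\Lambda_\chi$, $\Lambda_{\chi^{-1}}$ and the diagonal, the modulus of the total phase $E_2^k E_M^k E_1^k$ is bounded by $1$ and attains it only on the set
\begin{equation*}
\bigl\{(v',v,u,u')\,:\,u=u'=\chi^{-1}(v),\;v'=\chi(u')=v\bigr\}.
\end{equation*}
Outside any neighbourhood of this set the integrand contributes $O(k^{-\infty})$ uniformly in $(v',v)$ in compact sets; on such a neighbourhood the variables $(u,u')$ give $4$ real fibre variables, and the phase has a non-degenerate critical point at $u=u'=\chi^{-1}(v)$ for each fixed $v'=v$.

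Second, I would apply the complex stationary phase lemma in the $(u,u')$ variables. The critical set is the diagonal $\{v'=v\}$ in $\R^2\times\R^2$, a Lagrangian of the expected dimension for a Toeplitz operator on $\R^2$. The residual phase in $(v',v)$ is forced, by the almost-holomorphic properties of $E_1$ and $E_2$ along $\Lambda_\chi$ and $\Lambda_{\chi^{-1}}$ and the almost-holomorphicity of the Bergman section on $M$, to be exactly the Bergman phase $F_0^k(v',v)$ of $\R^2$ modulo a section vanishing to infinite order on the diagonal; this is the characterisation used earlier to define the section $E$ and applies again here because the residual phase is almost-holomorphic in $v'$, antiholomorphic in $v$, and equals $1$ with correct $1$-jet on the diagonal. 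Combined with lemma \ref{lm:kernelToep} and lemma \ref{lm:covBarg}, this identifies $S_k T_k R_k$ as a Toeplitz operator on $\R^2$ up to an operator of the form (\ref{eq:negexp}).

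Finally, extraction of the principal symbol is a bookkeeping step: evaluating the integrand at the critical point, multiplying by the Gaussian integral of the quadratic Hessian (which produces a smooth, non-vanishing factor $\nu(v)$ depending only on the Hessian of $\log(E_2 E_M E_1)$, hence on the sections $E_1,E_2$ and the Kähler data, not on $T_k$) and absorbing the measure normalisations yields the claimed principal symbol
\begin{equation*}
\nu(v)\,s_0(v,\chi^{-1}(v))\,t_0(\chi^{-1}(v))\,r_0(\chi^{-1}(v),v).
\end{equation*}
The main obstacle is the phase analysis in the middle step: one must ensure that after stationary phase the output phase coincides with the canonical Bergman phase of $\R^2$ modulo infinite-order flat terms, so that the resulting operator is genuinely Toeplitz in the sense of the definition of Section 3 rather than merely an operator with Lagrangian kernel. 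Uniqueness of almost-holomorphic extensions along the diagonal (the same argument that pins down $E$ in the construction of $\mathcal{FI}(\chi)$) is what closes this gap; once this is established, the rest is stationary phase plus the symbolic calculus of Toeplitz operators on $\R^2$ already available through corollary \ref{cor:compoToep}.
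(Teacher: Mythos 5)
Your proposal is correct and follows essentially the same route the paper indicates: the paper disposes of this proposition with the one-line remark that it is ``another application of the stationary phase lemma,'' and your argument — writing the kernel of $S_k T_k R_k$ as an oscillatory integral over $M\times M$, localizing via $\|E_1\|,\|E_2\|,\|E_M\|\le 1$ with equality only on the respective Lagrangians, applying complex stationary phase in $(u,u')$, and pinning down the output phase via uniqueness of the almost-holomorphic extension along the diagonal — is precisely the fleshed-out form of that remark, with the principal symbol obtained by evaluating the amplitudes at the critical point $u=u'=\chi^{-1}(v)$, $v'=v$, exactly as claimed.
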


To conclude this section, we prove that we can find \textit{microlocally unitary} operators mapping $\Barg_{k}$ to $\Hil_{k}$ in the following sense. 
\begin{prop} There exists a Fourier integral operator $U_{k}: \Barg_{k} \rightarrow \Hil_{k}$ such that
\begin{itemize}
\item $U_{k}^*U_{k} \sim \Pi_{k}^0$ on $\Omega_{2}$,
\item $U_{k}U_{k}^* \sim \Pi_{k}$ on $\Omega_{1}$,
\end{itemize}
reducing $\Omega_{1}$ and $\Omega_{2}$ if necessary, where we recall that the symbol $\sim$ stands for microlocal equality.
\label{prop:MU}\end{prop}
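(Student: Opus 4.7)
The strategy is the classical one: pick any $V_k \in \mathcal{FI}(\chi)$ with nonvanishing principal symbol, correct it on the Bargmann side by composition with a Toeplitz inverse-square-root so that $U_k^* U_k$ becomes microlocally the identity on $\Omega_2$, and then deduce $U_k U_k^* \sim \Pi_k$ on $\Omega_1$ from a microlocal projector argument.

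Concretely, I take $V_k \in \mathcal{FI}(\chi)$ built from the sequence $a(u,v,k) = \mu(u)^{-1} \rho(u,v)$, where $\rho$ is a smooth cutoff equal to $1$ in a neighborhood of $\Lambda_\chi$ inside $C$, so that the principal symbol of $V_k$ is identically $1$ on $\Lambda_\chi$. The composition rule for $S_k T_k R_k$ applied to $V_k^* \Pi_k V_k$ then identifies $A_k := V_k^* V_k$ as a Toeplitz operator on $\Barg_k$ with principal symbol $\nu(v) > 0$ on $\Omega_2$, where $\nu$ is the nowhere vanishing function appearing in that proposition. Possibly shrinking $\Omega_2$ (and correspondingly $\Omega_1 = \chi^{-1}(\Omega_2)$), I may assume $\nu \geq c > 0$ on $\overline{\Omega_2}$. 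Modifying $A_k$ outside a slight enlargement of $\Omega_2$ by adding a suitable Toeplitz operator produces $\hat A_k \in \mathcal{T}_j$ for some $j \geq 1$, essentially self-adjoint, elliptic at infinity, microlocally equal to $A_k$ on $\Omega_2$, and with principal symbol bounded below. Applying Proposition \ref{prop:funcBarg} to a compactly supported $\eta \in \classe{\infty}{(\R,\R)}$ that coincides with $x \mapsto x^{-1/2}$ on the range of this principal symbol yields $B_k := \eta(\hat A_k)$, and I set $U_k := V_k B_k$. A stationary-phase computation of the same kind as the one underlying the composition rule shows that $U_k \in \mathcal{FI}(\chi)$.

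For the first identity, $U_k^* U_k = B_k A_k B_k \sim B_k \hat A_k B_k \sim \Pi_k^0$ on $\Omega_2$ by the choice of $\eta$. For the second, set $P_k := U_k U_k^*$; then $P_k^2 = U_k(U_k^*U_k)U_k^* \sim U_k \Pi_k^0 U_k^* = P_k$ on $\Omega_1$, and since $U_k$ takes values in $\Hil_k$ we also have $\Pi_k P_k = P_k$, so $P_k$ is a microlocal self-adjoint projector on $\Omega_1$ subordinate to $\Pi_k$. Its principal symbol $p_0$ is continuous and satisfies $p_0^2 = p_0$, hence takes values in $\{0,1\}$; a direct (symmetric) composition calculation shows $p_0$ does not vanish on $\Omega_1$, so $p_0 \equiv 1$ there (after taking $\Omega_1$ connected). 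A standard iterative argument on subprincipal orders then upgrades this to $P_k \sim \Pi_k$ microlocally on $\Omega_1$. The main technical hurdle is the noncompactness of $\C$, which forces one to extend $A_k$ carefully to an elliptic-at-infinity operator in $\mathcal{T}_j$ before applying the functional calculus, and to check that the composition of an FIO with an arbitrary Toeplitz operator on $\Barg_k$ remains in $\mathcal{FI}(\chi)$; both boil down to symbol-class bookkeeping analogous to Charles's compact-manifold theory, the Schwartz-kernel description \eqref{eq:FIO} making the verifications routine if somewhat lengthy.
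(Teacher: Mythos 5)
Your proof reaches the same conclusions but by a genuinely different route for the first identity. The paper's proof starts from an arbitrary $S_k\in\mathcal{FI}(\chi)$ with nowhere-vanishing principal symbol and sets $U_k^{(0)}=S_kP_k$ with $P_k$ a Toeplitz operator chosen so that the principal symbol of $U_k^{(0)*}U_k^{(0)}$ equals $1$; it then corrects order by order, choosing at step $n$ a Toeplitz $T_k$ with $2\Re(t_0)=-r_n$ and passing to $U_k^{(n+1)}=U_k^{(n)}(\Pi_k^0+k^{-(n+1)}T_k)$, finally invoking Borel summation. You instead build $U_k$ \emph{in one shot} via a polar-decomposition idea: take $V_k$ with unit principal symbol, identify $A_k=V_k^*V_k$ as a Toeplitz operator with positive principal symbol $\nu$, extend $A_k$ to an elliptic-at-infinity operator $\hat A_k$, and set $U_k=V_k\,\eta(\hat A_k)$ with $\eta$ a compactly supported cutoff of $x\mapsto x^{-1/2}$. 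This is cleaner conceptually, but it pushes the real work into the functional calculus (Proposition \ref{prop:funcBarg}) and into the extension step, both of which are sensitive to the noncompactness of $\C$; the paper's iterative correction sidesteps this by only ever composing with Toeplitz operators with compactly supported symbols, keeping all operators tame. One small imprecision in your version: the range of $\hat a_0$ is unbounded, so $\eta$ can only agree with $x^{-1/2}$ on a compact neighbourhood of $\nu(\overline{\Omega_2})$; one then needs the standard spectral-localization fact that $(x\eta^2-1)(\hat A_k)$ is microsupported away from $\hat a_0^{-1}(\mathrm{supp}(x\eta^2-1))$ to conclude $B_kA_kB_k\sim\Pi_k^0$ on $\Omega_2$. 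For the second identity your argument is essentially the paper's, but stated less efficiently: once you know (as you claim via a direct composition calculation) that the principal symbol of $P_k=U_kU_k^*$ is nonzero on $\Omega_1$, you immediately have a microlocal inverse, and $(U_kU_k^*)^2\sim U_kU_k^*$ then gives $U_kU_k^*\sim\Pi_k$ in one line, without the idempotency-of-the-symbol detour or the ``iterative argument on subprincipal orders.''
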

\begin{proof} 
Start from a Fourier integral operator $S_{k}$ associated to $s(.,.,k)$ with principal symbol $s_{0}(v,\chi^{-1}(v))$ never vanishing on $\Omega_{1} \times \Omega_{2}$.
The first step is to construct an operator with the first property; we do it by induction, correcting $S_{k}$ by Toeplitz operators. More precisely, let $P_k$ be a Toeplitz operator on $\R^2$, and denote by $p_0$ its principal symbol. Set $U_k^{(0)} = S_k P_k$; then $U_k^{(0)*} U_k^{(0)}$ is a Toeplitz operator on $\R^2$, with principal symbol $\nu(v) \left|p_0(v)\right|^2 \left|s_0\left(\chi^{-1}(v),v \right) \right|^2$. Since $\nu(v)$ and $s_0(s,\chi^{-1}(v))$ vanish in no point $v$, one can choose $p_0$ such that this principal symbol is equal to $1$. Doing so, $U_k^{(0)*} U_k^{(0)}$ has the same principal symbol as $\Pi_{k}^0$, so there exists a Toeplitz operator $R_k^{(0)}$ such that
\begin{equation*} U_k^{(0)*} U_k^{(0)} \sim \Pi_{k}^0 + k^{-1} R_k^{(0)} \end{equation*}
on $\Omega_{2}$. From now on, when there is no ambiguity, the equality between operators will mean microlocal equality on $\Omega_{2}$.
Let $n \in \N$ and assume that there exists an operator $U_k^{(n)}: \Barg_{k} \rightarrow \Hil_{k}$ and a Toeplitz operator $R_k^{(n)}$ (with principal symbol $r_{n}$) such that
\begin{equation*} U_k^{(n)*} U_k^{(n)} = \Pi_{k}^0 + k^{-(n+1)} R_k^{(n)}. \end{equation*}
Let $T_{k}$ be a Toeplitz operator on $\R^2$ with principal symbol $t_{0}$, and set $U_k^{(n+1)} = U_{k}^{(n)} \left( \Pi_{k}^0 + k^{-(n+1)}T_{k} \right)$. One has
\begin{equation*} U_k^{(n+1)*} U_k^{(n+1)} =  \Pi_{k}^0 + k^{-(n+1)} \left( T_{k}^* + R_k^{(n)} + T_{k} \right) + k^{-(n+2)} R_k^{(n+1)}  \end{equation*}
with $R_k^{(n+1)}$ a Toeplitz operator. This implies that if we choose $t_{0}$ such that $ 2 \Re(t_{0}) = - r_{n}$, then
\begin{equation*} U_k^{(n+1)*} U_k^{(n+1)} =  \Pi_{k}^0 +  k^{-(n+2)} R_k^{(n+1)}. \end{equation*}
So we can construct the operators $U_{k}^{(n)}$ by induction; it remains to apply Borel's summation lemma to find the desired operator $U_{k}$.

Composing such a $U_{k}$ by $U_{k}$ on the left and $U_{k}^*$ on the right gives:
\begin{equation*} \left( U_{k} U_{k}^* \right)^2 \sim U_{k} U_{k}^* \quad \text{on \  } \Omega_{1}. \end{equation*}
Since $U_{k} U_{k}^*$ is an elliptic Toeplitz operator on $\Omega_{1}$ (its principal symbol vanishes nowhere), it has a microlocal inverse at each point of $\Omega_{1}$; so the preceding equation yields $U_{k}U_{k}^* \sim \Pi_{k}$ on $\Omega_{1}$.
\end{proof}
\paragraph{}
Of course, such operators satisfy the analogue of Egorov's theorem:
\begin{prop} If $U_{k}$ is as above, then, for every Toeplitz operator $T_{k}$ on $M$ with principal symbol $t_{0}$, $S_{k} = U_{k}^* T_{k} U_{k}$ is a Toeplitz operator on $\R^2$ with principal symbol equal to $t_{0} \circ \chi^{-1}$ on $\Omega_{2}$. \end{prop}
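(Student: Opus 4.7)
The strategy is to invoke directly the composition result proved just before (the proposition about $S_k T_k R_k$) and then to pin down the resulting normalization using the microlocal unitarity properties of $U_k$ established in Proposition~\ref{prop:MU}.

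First I would check that $U_k^{*}$ belongs to $\mathcal{FI}(\chi^{-1})$. Starting from the form (\ref{eq:FIO}) of the kernel, $U_k(u,v) = \frac{k}{2\pi}E^k(u,v) b(u,v,k) + O(k^{-\infty})$, and using that the kernel of the adjoint is obtained by swapping the two arguments and taking the pointwise adjoint in $L \boxtimes L_0^{-1}$, one sees that $U_k^{*}$ has a kernel of the same form on $\Omega_{2} \times \Omega_{1}^{op}$, associated to the Lagrangian $\Lambda_{\chi^{-1}} = \{(\chi(u),u)\}$, with principal symbol $\overline{b_0(u,\chi(u))}$. The identity $\Pi_{k}^{0} U_k^{*} \Pi_{k} = U_k^{*}$ is just the adjoint of $\Pi_{k} U_k \Pi_{k}^{0} = U_k$, so $U_k^{*} \in \mathcal{FI}(\chi^{-1})$.

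Next, I would apply the previous composition proposition to $S_k = U_k^{*} \in \mathcal{FI}(\chi^{-1})$, $T_k$, and $R_k = U_k \in \mathcal{FI}(\chi)$. This gives at once that $U_k^{*} T_k U_k$ is a Toeplitz operator on $\R^2$, and its principal symbol on $\Omega_{2}$ is
\[
\nu(v)\,\overline{b_0(\chi^{-1}(v),v)}\, t_0(\chi^{-1}(v))\, b_0(\chi^{-1}(v),v) = \nu(v)\,\bigl|b_0(\chi^{-1}(v),v)\bigr|^{2}\, t_0(\chi^{-1}(v)).
\]
The last step is to eliminate the factor $\nu(v)\,|b_0(\chi^{-1}(v),v)|^{2}$. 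For this I would specialize the same formula to the particular Toeplitz operator $T_k = \Pi_{k}$ (principal symbol identically equal to $1$): on the one hand, we then get a Toeplitz operator on $\R^2$ with principal symbol $\nu(v)|b_0(\chi^{-1}(v),v)|^{2}$ on $\Omega_{2}$; on the other hand, $U_k^{*} \Pi_{k} U_k = U_k^{*} U_k \sim \Pi_{k}^{0}$ on $\Omega_{2}$ by Proposition~\ref{prop:MU}, whose principal symbol is $1$. Hence $\nu(v)|b_0(\chi^{-1}(v),v)|^{2} = 1$ on $\Omega_{2}$, and plugging this back into the general formula yields the principal symbol $t_0 \circ \chi^{-1}$ on $\Omega_{2}$.

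There is no real obstacle here once the composition proposition is granted; the only point requiring some care is the identification of $U_k^{*}$ as an element of $\mathcal{FI}(\chi^{-1})$ together with the correct computation of its principal symbol, as the rest follows formally. Note that we are only asserting something about the principal symbol on $\Omega_{2}$, so possible smoothing remainders outside the compact support of the amplitudes play no role.
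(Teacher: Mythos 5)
Your proof is correct. The paper itself does not supply a proof of this Egorov-type statement: it simply refers the reader to \cite[Theorem 47]{CdV2}. Your argument is the natural self-contained derivation from the material already in the section, namely the composition proposition for $S_{k} T_{k} R_{k}$ with $S_{k} \in \mathcal{FI}(\chi^{-1})$, $R_{k} \in \mathcal{FI}(\chi)$, combined with the microlocal unitarity of $U_{k}$ from Proposition~\ref{prop:MU}. The normalization step is exactly right: applying the composition formula once with $T_{k} = \Pi_{k}$ (principal symbol $1$) and comparing with $U_{k}^{*} \Pi_{k} U_{k} = U_{k}^{*} U_{k} \sim \Pi_{k}^{0}$ on $\Omega_{2}$ forces $\nu(v)\,|b_{0}(\chi^{-1}(v),v)|^{2} = 1$ there, and this is legitimate because two Toeplitz operators that are microlocally equal on an open set have equal contravariant symbols on that set. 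The one point that deserves a line rather than an assertion is the identification of $U_{k}^{*}$ as an element of $\mathcal{FI}(\chi^{-1})$ with principal symbol $\overline{b_{0}(\chi^{-1}(v),v)}$: one must check that the hermitian adjoint $\overline{E}$ of the reference section $E$, viewed over $\Omega_{2} \times \Omega_{1}^{op}$, satisfies the defining conditions (restriction to $\Lambda_{\chi^{-1}}$, vanishing of antiholomorphic derivatives to infinite order, norm $<1$ off the Lagrangian) and therefore agrees with the reference section used to define $\mathcal{FI}(\chi^{-1})$, up to a flat term; this follows from the uniqueness modulo flat terms of such a section, but should be stated. Modulo that small elaboration, the argument is complete and sound.
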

For a proof, see \cite[theorem 47]{CdV2}.
The action of a Fourier integral operator at the subprincipal level is much more complicated to compute. Denote by $\sigma_{0}(U_{k})$ the principal symbol of $U_{k}$, and by $\gamma$ the 1-form on $\Lambda_{\chi}$ such that
\begin{equation*} \nabla^{\text{Hom}(\C,K)} \sigma_{0}(U_{k}) = -\frac{1}{i} \gamma \otimes \sigma_{0}(U_{k}) \end{equation*}
endowing $\C$ with the trivial connection and $K$ with the one inherited from $L_{1}$ and $\delta$. Now, notice that the symplectomorphism $\chi$ brings the complex structure of $\C^{op}$ to a positive complex structure $j$ on $M$. Introduce the section $ \Psi$ of $\mathrm{Hom}(\Omega^{1,0}(\C),\Omega_{j}^{1,0}(M))_{|\Lambda_{\chi}} \rightarrow \Lambda_{\chi}$ such that for all $\alpha \in \Lambda^{1,0}(T\Omega_{2}^{op})^*$ and $\beta \in \Lambda^{1,0}(T\Omega_{1})^*$,
\begin{equation*} \Psi(\alpha) \wedge \bar{\beta}  = (\chi^* \alpha) \wedge \bar{\beta}.\end{equation*}
This map is well-defined because the sesquilinear pairing $ \Lambda_{j}^{1,0}(T_{m}\Omega_{1})^* \times \Lambda^{1,0}(T_{m}\Omega_{1})^*  \rightarrow \C, (\alpha,\beta) \mapsto (\alpha \wedge \bar{\beta})/\omega_{m}$ is non-degenerate. 
Let $\delta$ be the 1-form on $\Lambda_{\chi}$ such that 
\begin{equation*} \nabla^{} \Psi = \delta \otimes \Psi \end{equation*}
where $\nabla^{}$ is the connection induced by the Chern connections of $\Omega^{2,0}(\C)$ and $\Omega^{2,0}(M)$.
In \cite[Theorem $3.3$]{Cha5}, Charles derived the following formula.
\begin{thm} With the same notations as in the previous proposition and denoting by $t_{1}$ the subprincipal symbol of $T_{k}$, the subprincipal symbol $s_{1}$ of $S_{k}$ is given by:
\begin{equation*}  s_{1}(u) = t_{1}(m)  + \left\langle {\gamma}_{(m,u)} - \frac{1}{2} {\delta}_{(m,u)} , \left(X_{t_{0}}(m),((\chi^{-1})^*X_{t_{0}})(u) \right) \right\rangle  \end{equation*}
for $u \in \R^2$ and $m=\chi^{-1}(u)$.
\label{thm:FIOssp}\end{thm}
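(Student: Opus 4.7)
The approach I would take is to compute the Schwartz kernel of $S_k = U_k^* T_k U_k$ as a double integral over $M \times M$ and apply the complex stationary phase lemma. Writing
\begin{equation*}
S_k(v,v') = \int_{M \times M} \overline{U_k(m,v)} \, T_k(m,m') \, U_k(m',v') \, d\mu_M(m) \, d\mu_M(m'),
\end{equation*}
each factor has a Lagrangian-type form: $U_k$ is $\frac{k}{2\pi} E^k b + O(k^{-\infty})$ concentrated near $\Lambda_\chi$, and the kernel of $T_k$ (by the formula for Toeplitz kernels on $M$) is concentrated along the diagonal of $M \times M$ with an exponential weight whose modulus is maximal exactly on $\Delta_M$. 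For fixed $(v,v') \in \Omega_2 \times \Omega_2$ the combined phase is therefore critical only at the single point $m = m' = \chi^{-1}(v) = \chi^{-1}(v')$, and is strictly suppressed elsewhere since $\|E\| < 1$ off $\Lambda_\chi$.

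Applying the stationary phase lemma then produces an asymptotic expansion in $k^{-1}$ whose leading term reproduces the Egorov formula of the previous proposition, confirming that $S_k$ is a Toeplitz operator on $\R^2$ with principal symbol $t_0 \circ \chi^{-1}$. The subprincipal symbol is extracted from the $k^{-1}$ correction, and two distinct contributions appear. The first comes from differentiating the amplitudes at the critical point: because $\nabla^{\mathrm{Hom}(\C,K)} \sigma_0(U_k) = -\frac{1}{i}\gamma \otimes \sigma_0(U_k)$ by definition, the directional derivatives of $b$ along the vector $(X_{t_0}(m),(\chi^{-1})^*X_{t_0}(u))$ assemble into $\langle \gamma_{(m,u)}, (X_{t_0}(m),(\chi^{-1})^*X_{t_0}(u))\rangle$. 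The second contribution arises from the $k^{-1}$ term in the Gaussian integration factor; once the Hessian is rewritten in adapted trivializations of $\Omega^{1,0}$, this piece becomes a pairing with the $1$-form $\delta$ controlling the variation of $\Psi$, producing the $-\tfrac{1}{2}\delta$ term.

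The main obstacle is the delicate bookkeeping of sections of the various line bundles and, more subtly, of the two competing almost complex structures on $\Omega_1$: the ambient K\"ahler one and the structure $j = \chi^*(j_{\C^{op}})$ pulled back by the symplectomorphism. The section $\Psi$ is precisely the obstruction measuring their mismatch on the holomorphic cotangent level, and its covariant derivative $\delta$ dictates how the Gaussian determinant from stationary phase depends on the base point. Concretely one must pick compatible local gauges of $L_1$, $\delta$ and $L_0$, express $E$ and $\sigma_0(U_k)$ in them, track every Jacobian factor, and verify that after all identifications the coefficient of $k^{-1}$ is $\langle \gamma - \tfrac{1}{2}\delta, (X_{t_0},(\chi^{-1})^*X_{t_0})\rangle$ independently of the choices. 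Since this intricate computation is carried out in detail in \cite[Theorem~3.3]{Cha5}, I would invoke that result directly rather than redo the full expansion here.
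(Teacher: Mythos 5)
Your proposal matches the paper's treatment exactly: the author provides no proof of Theorem~\ref{thm:FIOssp}, but simply states that the formula is derived in \cite[Theorem~3.3]{Cha5}, which is precisely your final step. The stationary-phase sketch you give as motivation is a plausible account of what happens inside Charles' proof (the $\gamma$-term from the connection form of $\sigma_0(U_k)$ and the $-\frac{1}{2}\delta$-term from the Hessian/half-density factor), but the paper itself does not include any such elaboration and relies entirely on the citation.
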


\section{Microlocal normal form}

\subsection{The local model}

Our local model will be the realization of the quantum harmonic oscillator in the Bargmann representation: $Q_{k} = \frac{1}{k} \left( z \dpar{}{z} + \frac{1}{2} \right)$, with domain $\C[z]$ which is dense is $\Barg_{k}$. The following lemma is easily shown.
\begin{lm} $Q_{k}$ is an essentially self-adjoint Toeplitz operator with normalized symbol $q_{0}$. Moreover, the spectrum of $Q_k$ is
\begin{equation*} \text{Sp}(Q_{k}) = \left\{ k^{-1} \left(n+\frac{1}{2}\right), n \in \N \right\}.\end{equation*} \end{lm}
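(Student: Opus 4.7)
The plan is to prove the lemma by first computing the action of $Q_k$ on the orthonormal basis $(\varphi_{n,k})$ — which immediately yields the spectrum and essential self-adjointness — and then by computing its Schwartz kernel explicitly to recognize it as a Toeplitz operator and identify the normalized symbol.

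Since the holomorphic part of $\varphi_{n,k}$ is proportional to $z^n$ and $z\partial_z z^n = n z^n$, a direct computation gives
\begin{equation*} Q_k \varphi_{n,k} = k^{-1}\left(n + \tfrac{1}{2}\right) \varphi_{n,k}, \qquad n \in \N. \end{equation*}
Because $(\varphi_{n,k})$ is a complete orthonormal basis of $\Barg_k$, $Q_k$ is symmetric on the invariant dense domain $\C[z]\psi^k$ and admits in that domain a total family of eigenvectors with real eigenvalues. Its closure therefore coincides with the self-adjoint operator produced by the spectral theorem from this eigenbasis (a standard graph-closure argument: truncate any $\psi = \sum c_n \varphi_n$ in the spectral domain and pass to the limit), so $Q_k$ is essentially self-adjoint. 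The non-accumulation of the eigenvalues rules out any additional continuous or residual spectrum.

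To realize $Q_k$ as a Toeplitz operator in the sense of Section \ref{subsection:ToepBarg}, I would compute its Schwartz kernel by applying it to the reproducing identity for $\Pi_k^0$. Starting from $f(z_1) = \int \frac{k}{2\pi} e^{kz_1\bar z_2} f(z_2) e^{-k|z_2|^2} d\lambda(z_2)$ for $f$ holomorphic and differentiating under the integral yields $z_1 f'(z_1) = \int \frac{k^2}{2\pi} z_1 \bar z_2 e^{kz_1\bar z_2} f(z_2) e^{-k|z_2|^2} d\lambda(z_2)$, so that
\begin{equation*} (Q_k f)(z_1) = \int \frac{k}{2\pi}\left( z_1 \bar z_2 + \frac{1}{2k} \right) e^{k z_1 \bar z_2} f(z_2) e^{-k|z_2|^2} d\lambda(z_2). \end{equation*}
Reinstating the Gaussian weights $\psi^k$ so that the kernel acts on sections, this matches the form of Lemma \ref{lm:kernelToep} exactly, with amplitude $\tilde q(z_1, z_2, k) = z_1 \bar z_2 + 1/(2k)$ and zero smoothing remainder. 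A quick check shows $\tilde q \in \mathcal{S}_2^2$, so $Q_k$ is a Toeplitz operator whose covariant symbol is $\tilde q(z,z,k) = |z|^2 + \hbar/2$.

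To extract the normalized symbol, I would invert the relation $\sigma_{\text{cov}} = \exp(\hbar \Delta) \sigma_{\text{cont}}$ from Lemma \ref{lm:kernelToep}: since $\Delta |z|^2 = 1$ and $\Delta^2 |z|^2 = 0$, the inversion is exact and yields $\sigma_{\text{cont}}(Q_k) = |z|^2 - \hbar/2$; applying $(\text{Id} + \tfrac{\hbar}{2}\Delta)$ then gives $\sigma_{\text{norm}}(Q_k) = |z|^2$, which we take as the definition of $q_0$ (the classical harmonic oscillator Hamiltonian in these symplectic coordinates). The only delicate point is the verification that $\tilde q$ lies in a legitimate symbol class despite $Q_k$ being unbounded and $\tilde q$ failing to be compactly supported; apart from this membership check, the whole argument reduces to differentiating under a Gaussian integral.
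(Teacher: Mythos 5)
Your proof is correct and complete; the paper itself omits the proof (it just says ``The following lemma is easily shown''), and your argument is the natural one the authors likely had in mind: the spectral claim and essential self-adjointness follow immediately from the explicit eigenbasis $(\varphi_{n,k})$, and the Toeplitz identification together with $\sigma_{\mathrm{norm}}(Q_k) = |z|^2 = \tfrac{1}{2}(x^2+\xi^2) = q_0$ follows from the exact kernel computation and the covariant/contravariant/normalized symbol relations of Lemma \ref{lm:kernelToep}. The one spot worth tightening is the phrase ``zero smoothing remainder'': more precisely, $Q_k$ and $\mathrm{Op}(|z|^2 - \hbar/2)$ have the same covariant symbol, so by Lemma \ref{lm:covBarg} their difference is of the form (\ref{eq:negexp}), which is exactly what Definition 3.5 requires.
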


\subsection{A symplectic Morse lemma}

Let $(N,\omega)$ be a two-dimensional symplectic manifold and $f$ a function of $\classe{\infty}{(N,\R)}$. Assume $f$ admits an elliptic critical point at $n_{0} \in N$, with $f(n_{0})=0$. Replacing $f$ by $-f$ if necessary, we can assume that this critical point is a local minimum for $f$. Define
\begin{equation*} q_{0}: \R \rightarrow \R^2, \quad (x,\xi) \mapsto \frac{1}{2}(x^2 + \xi^2) . \end{equation*}
The following theorem is well-known.
\begin{thm}
There exist a local symplectomorphism $\chi:(N,n_{0})  \rightarrow (\R^2,0)$ and a function $g$ in $\classe{\infty}{(\R,\R )}$ satisfying $g(0) = 0$ and $g'(0) > 0$, such that
\begin{equation*} f \circ \chi^{-1} = g \circ q_{0} \end{equation*}
where $\chi^{-1}$ is defined.
\label{thm:symp}\end{thm}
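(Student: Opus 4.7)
The plan is to build symplectic action--angle coordinates around the elliptic minimum and then repackage them in Cartesian form. First I would apply the classical Morse lemma to obtain smooth (not necessarily symplectic) local coordinates near $n_{0}$ in which $f = \frac{1}{2}(u^{2}+v^{2})$; for $E>0$ small, the level set $\Gamma_{E} = f^{-1}(E)$ then bounds a topological disk $D_{E} \ni n_{0}$, and the orbit of $X_{f}$ on $\Gamma_{E}$ is periodic with a period $T(E)$ that extends smoothly to $T(0) > 0$ (determined by the nondegenerate elliptic Hessian of $f$).

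Next, I define the action
\begin{equation*}
I(E) = \frac{1}{2\pi}\int_{D_{E}} \omega, \qquad E\in[0,E_{0}).
\end{equation*}
Stokes' theorem applied to a local primitive of $\omega$ gives $I'(E) = T(E)/(2\pi) > 0$, so $I$ is a smooth diffeomorphism with smooth inverse $h$ satisfying $h(0)=0$ and $h'(0) > 0$. The rescaled Hamiltonian $\tilde f = h^{-1}\circ f$ then has a flow of constant period $2\pi$; picking a smooth local transversal through a nearby regular point yields an angle coordinate $\theta \in \R/2\pi\Z$ such that $(I,\theta)$ are action--angle coordinates on a punctured neighborhood of $n_{0}$, with $\omega = dI\wedge d\theta$. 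I then define $\chi$ by $x - i\xi = \sqrt{2I}\,e^{i\theta}$, so that $q_{0}\circ\chi = I = h^{-1}\circ f$; a direct computation confirms $\chi^{*}(d\xi\wedge dx) = dI\wedge d\theta = \omega$. Setting $g = h$, we obtain $f\circ\chi^{-1} = g\circ q_{0}$ with $g(0)=0$ and $g'(0) > 0$, as required.

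The main obstacle is verifying that $\chi$ extends smoothly across $n_{0}$, since the action--angle coordinates $(I,\theta)$ are polar-like and singular there. This is resolved by linearizing the $2\pi$-periodic $S^{1}$-action generated by $X_{\tilde f}$ near $n_{0}$: Bochner's theorem provides smooth coordinates centered at $n_{0}$ in which this flow is a standard rotation, and in such coordinates the Cartesian expressions $\sqrt{2I}\cos\theta$ and $\sqrt{2I}\sin\theta$ become manifestly smooth $S^{1}$-equivariant functions, agreeing with the linearizing coordinates to leading order (up to a factor $h'(0)^{-1/2}$). Smoothness of $\chi$ at $n_{0}$ then follows, and combined with the symplectic identity above, $\chi$ is the desired local symplectomorphism.
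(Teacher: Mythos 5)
The paper does not prove Theorem \ref{thm:symp}: it only remarks that the statement follows from the isochore Morse lemma of Colin de Verdi\`ere--Vey or from Eliasson's normal form theorem, adding that the two-dimensional elliptic case is in fact easier than either. Your proposal supplies exactly such a direct proof. The action--angle construction is correct: smoothness of the period $T(E)$ down to $E=0$, the period--action relation $I'(E)=T(E)/2\pi$, the rescaling $\tilde f = h^{-1}\circ f$ to a genuine $2\pi$-periodic flow, and the final identity $q_0\circ\chi = \tilde f$ giving $f\circ\chi^{-1}=h\circ q_0$ all hold. The delicate point -- smoothness of $\chi$ across $n_0$ -- is correctly addressed by Bochner linearization: in linearizing coordinates $(p,q)$ the $S^1$-invariant function $\tilde f$ is a smooth $\phi(p^2+q^2)$ with $\phi(0)=0$ and $\phi'(0)>0$, so $\sqrt{2I}\cos\theta = p\sqrt{2\phi(p^2+q^2)/(p^2+q^2)}$ (and its companion) are smooth with invertible differential at the origin. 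The one precision worth adding is that $\theta$ must be taken to coincide with the polar angle of the Bochner chart: an angle built from an arbitrary transversal through a regular point differs from that polar angle by a function of $I$ alone, which need not extend smoothly to $I=0$; your argument makes this choice implicitly when it invokes agreement with the linearizing coordinates. The asserted normalizing factor $h'(0)^{-1/2}$ depends on the scaling of the Bochner chart and is inessential, since only smoothness and invertibility of the Jacobian matter. In short, this is a correct, self-contained elementary proof, amounting to the two-dimensional elliptic case of the isochore Morse lemma established directly via the circle symmetry, which is precisely the argument the paper leaves implicit by citing the general results.
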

It can be viewed as a consequence of the isochore Morse lemma \cite{CdV1} or of Eliasson's symplectic normal form theorem \cite{Elia}, but this case is in fact easier than these two results.

\subsection{Semiclassical normal form}

We consider a self-adjoint Toeplitz operator $A_{k}$ on $M$; its normalized symbol 
\begin{equation*} a(.,\hbar) = a_{0} + \hbar a_{1} + \ldots \end{equation*}
is real-valued.
Assume that the principal symbol $a_{0}$ admits a non-degenerate local minimum at $m_{0} \in M$. Assume also that $a_{0}(m_{0}) = 0$, so that $a_{0}$ takes positive values on a neighbourhood of $m_{0}$.
Hence, thanks to theorem \ref{thm:symp}, we get a neighbourhood $\Omega_{1}$ of $m_{0}$ in $M$, a neighbourhood $\Omega_{2}$ of $0$ in $\R^2$, a local symplectomorphism $\chi:\Omega_{1} \rightarrow \Omega_{2}$ and a function $g_{0}$ of $\classe{\infty}{(\R,\R)}$ with $g_{0}(0) = 0$ and $g_{0}'(0) > 0$, such that:
\begin{equation*}  a_{0} \circ \chi^{-1} = g_{0} \circ q_{0} \end{equation*}
on $\Omega_{2}$. We denote by $f_{0}$ the local inverse of $g_{0}$. Our goal is to show:
\begin{thm}
There exist a Fourier integral operator $U_{k}: \Barg_{k} \rightarrow \Hil_{k}$ and a sequence $f(.,k)$ of functions of $\classe{\infty}{(\R,\R)}$ which admits an asymptotic expansion in the $\classe{\infty}{}$ topology of the form $f(.,k) = \sum_{\ell \geq 0} k^{-\ell}f_{\ell}$, such that:
\begin{itemize}
\item $U_{k}^*U_{k} \sim \Pi_{k}^0$ on $\Omega_{2}$,
\item $U_{k}U_{k}^* \sim \Pi_{k}$ on $\Omega_{1}$,
\item $U_{k}^*f(A_{k},k)U_{k} \sim Q_{k}$ on $\Omega_{2}$.
\end{itemize}
\label{thm:formenormale}
\end{thm}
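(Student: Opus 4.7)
The plan is to build $U_{k}$ and $f(\cdot,k)$ iteratively, starting from a microlocally unitary Fourier integral operator provided by Proposition \ref{prop:MU} and then correcting order by order in powers of $k^{-1}$. Applying that proposition to $\chi$ yields $V_{k}:\Barg_{k}\to\Hil_{k}$ with $V_{k}^*V_{k}\sim\Pi_{k}^0$ on $\Omega_{2}$ and $V_{k}V_{k}^*\sim\Pi_{k}$ on $\Omega_{1}$. Since $g_{0}'(0)>0$, the inverse $f_{0}:=g_{0}^{-1}$ is smooth near $0$ and may be extended smoothly to $\R$; the functional calculus then makes $f_{0}(A_{k})$ a self-adjoint Toeplitz operator of principal symbol $f_{0}\circ a_{0}$, and the Egorov property of $V_{k}$ gives $V_{k}^*f_{0}(A_{k})V_{k}\sim Q_{k}+k^{-1}T_{k}^{(0)}$ on $\Omega_{2}$, for some self-adjoint Toeplitz operator $T_{k}^{(0)}$ with normalized principal symbol $t_{0}$.

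For the iteration, suppose that at step $n$ we have $f^{(n)}(\cdot,k)=f_{0}+\hbar f_{1}+\cdots+\hbar^{n}f_{n}$ and a microlocally unitary Fourier integral operator $U_{k}^{(n)}$ such that
\begin{equation*}
U_{k}^{(n)*}f^{(n)}(A_{k},k)U_{k}^{(n)}\sim Q_{k}+k^{-(n+1)}T_{k}^{(n)}\quad\text{on }\Omega_{2},
\end{equation*}
with $T_{k}^{(n)}$ self-adjoint Toeplitz of principal symbol $t_{n}$. I look for a correction $U_{k}^{(n+1)}=U_{k}^{(n)}\exp(-ik^{-n}B_{k})$, with $B_{k}$ a self-adjoint Toeplitz operator on $\R^2$ of principal symbol $b_{n}$, together with an update $f^{(n+1)}=f^{(n)}+\hbar^{n+1}f_{n+1}$. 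The commutator identity $e^{ik^{-n}B_{k}}Q_{k}e^{-ik^{-n}B_{k}}=Q_{k}+k^{-(n+1)}\{b_{n},q_{0}\}+O(k^{-(n+2)})$, combined with Egorov applied to $f_{n+1}(A_{k})$, yields the cohomological equation
\begin{equation*}
\{b_{n},q_{0}\}+f_{n+1}(g_{0}\circ q_{0})=-t_{n}\quad\text{on }\Omega_{2}.
\end{equation*}
Letting $\phi_{n}$ be the average of $t_{n}$ along the orbits of $X_{q_{0}}$ (circles $\{q_{0}=c\}$), which is a smooth function of $q_{0}$, I choose $f_{n+1}(E):=-\phi_{n}(g_{0}^{-1}(E))$, extended smoothly to $\R$. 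This reduces the equation to $\{b_{n},q_{0}\}=-(t_{n}-\phi_{n}(q_{0}))$, whose right-hand side has zero angular mean along every orbit of $X_{q_{0}}$.

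Once the families $(f_{\ell})_{\ell}$ and $(b_{n})_{n}$ are constructed, Borel summation in the $\classe{\infty}{}$ topology gives the desired $f(\cdot,k)\sim\sum_{\ell}k^{-\ell}f_{\ell}$, and an analogous assembly of the corrections produces a Fourier integral operator $U_{k}$ with $U_{k}\sim U_{k}^{(n)}$ modulo $O(k^{-(n+1)})$ for every $n$. The microlocal unitarity conditions $U_{k}^*U_{k}\sim\Pi_{k}^0$ and $U_{k}U_{k}^*\sim\Pi_{k}$ are preserved at each step, since each correction $\exp(-ik^{-n}B_{k})$ is unitary, so they hold in the limit; the normal form identity $U_{k}^*f(A_{k},k)U_{k}\sim Q_{k}$ on $\Omega_{2}$ holds by construction.

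The main obstacle is the smoothness through $0$ of the solution $b_{n}$ of the cohomological equation. In polar coordinates $(r,\theta)$ with $q_{0}=r^2/2$ one has $X_{q_{0}}=\partial_{\theta}$, so the equation becomes $\partial_{\theta}b_{n}=t_{n}-\phi_{n}(q_{0})$, which has an obvious primitive on $\R^2\setminus\{0\}$; smoothness at the origin reduces to a Fourier analysis of the angular modes, using the fact that the $k$-th $\theta$-Fourier coefficient of a smooth function on $\R^2$ vanishes to order $|k|$ in $r$ at $r=0$, and that dividing the non-zero modes by $ik$ preserves this vanishing. The zero mode obstruction has been killed precisely by the choice of $\phi_{n}$, so the construction goes through.
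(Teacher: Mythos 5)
Your proposal follows essentially the same strategy as the paper: an inductive correction of a microlocally unitary Fourier integral operator by unitary Toeplitz operators on $\Barg_{k}$, with the sequence $f(\cdot,k)$ updated at each step by a term of the form $\theta_{n+1}\circ f_{0}$ chosen so that the cohomological equation $\{b_{n},q_{0}\}=-\bigl(t_{n}-\phi_{n}(q_{0})\bigr)$ has zero angular average, followed by Borel summation; the paper corrects with a unitary Toeplitz $P_{k}$ of principal symbol $\exp(i\varphi_{0})$ and then $\Pi_{k}^{0}+k^{-n}V_{k}$, which is the same as your $\exp(-ik^{-n}B_{k})$ to the relevant order. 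The one place you could shorten your argument: rather than proving smoothness of $b_{n}$ through the origin by a Fourier-mode analysis in polar coordinates, the paper simply exhibits the explicit primitive $\varphi_{0}(x,\xi)=-\tfrac{1}{2\pi}\int_{0}^{2\pi} t\, r_{0}\bigl(\phi_{q_{0}}^{t}(x,\xi)\bigr)\,dt$, whose smoothness on all of $\R^{2}$ is immediate because it is an integral of a smooth function of $(t,x,\xi)$ over the compact interval $[0,2\pi]$, with the averaged part identified as $\theta_{n+1}\circ q_{0}$ via Lemma \ref{lm:q0}.
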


\begin{proof}
We consider an operator $U_{k}^{(0)}$ satisfying the two first points (see the previous section). We will construct the operator that we seek by successive perturbations by unitary Toeplitz operators on $\Barg_{k}$. More precisely, we show by induction that for every positive integer $n$, there exist an operator $U_{k}^{(n)}: \Barg_{k} \rightarrow \Hil_{k}$ satisfying the two first points, a sequence $f^{(n)}(.,k)$ of functions of $\classe{\infty}{(\R,\R)}$ of the form $f^{(n)}(.,k) = \sum_{\ell=0}^n k^{-\ell}f_{\ell}$, with $f_{\ell}$ smooth, and a Toeplitz operator $R_{k}^{(n)}$ acting on $\Barg_{k}$ such that
\begin{equation*} U_{k}^{(n)*} f^{(n)}(A_k,k) U_{k}^{(n)} = Q_k + k^{-(n+1)} R_{k}^{(n)} \quad \text{on \ } \Omega_{2}. \end{equation*}

The first step is as follows: by the results of the previous section, the operator $U_{k}^{(0)*}f_{0}(A_{k})U_{k}^{(0)}$ is a Toeplitz operator on $\Barg_{k}$, whose principal symbol is equal to $ f_{0} \circ a_{0} \circ \chi^{-1} = q_{0}$ on $\Omega_{2}$. Hence, there exists a Toeplitz operator $R_k^{(0)}$ on $\Barg_k$ such that
\begin{equation*} U_{k}^{(0)*}f_{0}(A_{k})U_{k}^{(0)} = Q_k + k^{-1} R_k^{(0)}. \end{equation*}  
We look for $U_{k}^{(1)}$ of the form $U_k^{(0)} P_k$ with $P_k$ a unitary Toeplitz operator on $\Barg_k$.
Moreover, we choose $f^{(1)}(.,k) = f_0 + k^{-1} \theta_1 \circ f_0$ with $\theta_1$ a smooth function that remains to determine.
Expanding, we get
\begin{equation*} U_{k}^{(1)*} f^{(1)}(A_k,k) U_{k}^{(1)} = P_k^* U_{k}^{(0)*} f_{0}(A_{k}) U_{k}^{(0)} P_k +  k^{-1}  P_k^* U_{k}^{(0)*} (\theta_{1} \circ f_{0})(A_{k})  U_{k}^{(0)} P_k \end{equation*}
which yields
\begin{equation*} U_{k}^{(1)*} f^{(1)}(A_k,k) U_{k}^{(1)} = P_k^* \left( Q_k + k^{-1} R_k^{(0)} \right) P_{k} + k^{-1}  P_k^* U_{k}^{(0)*} (\theta_{1} \circ f_{0})(A_{k})  U_{k}^{(0)} P_k. \end{equation*}
Consequently, we wish to have
\begin{equation*} P_k^* \left( Q_k + k^{-1} R_k^{(0)} \right) P_{k} + k^{-1}  P_k^* U_{k}^{(0)*} (\theta_{1} \circ f_{0})(A_{k})  U_{k}^{(0)} P_k = Q_{k} + k^{-2} R_{k}^{(1)}  \end{equation*}
where $R_{k}^{(1)}$ is a Toeplitz operator; this amounts, remembering that $P_{k}$ is unitary, to
\begin{equation*} Q_k P_{k} + k^{-1} \left( R_k^{(0)} P_{k} + U_{k}^{(0)*} (\theta_{1} \circ f_{0})(A_{k})  U_{k}^{(0)} P_k \right) = P_{k} Q_{k} + k^{-2} P_{k} R_{k}^{(1)}  \end{equation*}
which we can rewrite
\begin{equation*} [Q_{k},P_{k}] + k^{-1} \left( R_k^{(0)} P_{k} + U_{k}^{(0)*} (\theta_{1} \circ f_{0})(A_{k})  U_{k}^{(0)} P_k \right) = k^{-2} P_{k} R_{k}^{(1)} . \end{equation*}
This will be true if and only if the subprincipal symbol of the operator on the left of the equality vanishes, that is to say 
\begin{equation*} \frac{1}{i} \left\{ q_{0},p_{0} \right\} + p_{0} (r_{0}  + \theta_{1} \circ q_{0} ) = 0  \end{equation*}
where $p_{0}$ and $r_{0}$ stand for the respective principal symbols of $P_{k}$ and $R_{k}^{(0)}$.
Set $p_{0} = \exp(i \varphi_{0})$ with $\varphi_{0}$ a smooth, real-valued function (since $P_{k}$ is unitary). The previous equation then becomes
\begin{equation*} \left\{ \varphi_{0},q_{0} \right\} = r_{0}  + \theta_{1} \circ q_{0}.  \end{equation*}
This equation is standard and it is well-known that it can be solved. We recall a method from \cite{Elia} to find $\theta_{1}$ and $\varphi_{0}$ smooth such that it is satisfied, since we will need to know how to construct these in part \ref{subsection:computation}. 
Consider the functions:
\begin{equation*} F(x,\xi) = -\frac{1}{2\pi} \int_{0}^{2\pi} r_{0}(\phi_{q_{0}}^t(x,\xi)) \ dt \end{equation*}
and
\begin{equation*} \varphi_{0}(x,\xi) = -\frac{1}{2\pi} \int_{0}^{2\pi} t \  r_{0}(\phi_{q_{0}}^t(x,\xi)) \ dt, \end{equation*}
where $\phi_{q_{0}}^t$ stands for the Hamiltonian flow of $q_{0}$ taken at time $t$:
\begin{equation*} \phi_{q_{0}}^t(x,\xi) = (x \cos t + \xi \sin t, -x \sin t + \xi \cos t). \end{equation*}
Since we integrate on a compact set and the flow $\phi^t$ is smooth with respect to $(x,\xi)$, both $F$ and $\varphi_{0}$ are smooth. By construction, we have $\left\{ F,q_{0} \right\} = 0$. But we have the easy lemma
\begin{lm}
Let $f$ be a function of $\classe{\infty}{(\R^2,\R)}$ such that $\left\{ f,q_{0} \right\} = 0$. Then the function $g$ such that
\begin{equation*} f = g \circ q_{0} \end{equation*}
belongs to $\classe{\infty}{(\R,\R)}$.
\label{lm:q0}\end{lm}
So there exists a function $\theta_{1}$ of $\classe{\infty}{(\R,\R)}$ such that $F = \theta_{1} \circ q_{0}$.
Integrating by parts, it is easy to show that
\begin{equation*}  \left\{\varphi_{0},q_{0}\right\} = \theta_{1} \circ q_{0} + r_{0} \end{equation*}

The next steps are practically the same; indeed, let $n \geq 1$ and assume that we have found $U_{k}^{(n)}$ and $f^{(n)}(.,k)$ satisfying the desired properties. We now look for $U_{k}^{(n+1)}$ of the form $U_k^{(n)} (\Pi_{k}^0 + k^{-n} V_k)$ with $V_k$ a Toeplitz operator on $\Barg_k$ such that $\Pi_{k}^0 + k^{-n} V_k$ is unitary. Furthermore, we write $f^{(n+1)}(.,k) = f^{(n)}(.,k) + k^{-(n+1)} \theta_{n+1} \circ f_0$ with $\theta_{n+1}$ an unknown smooth function. We want the existence of a Toeplitz operator $R_{k}^{(n+1)}$ such that
\begin{equation*} U_k^{(n)*} f^{(n+1)}(A_{k},k) U_k^{(n)}  \left( \Pi_{k}^0 + k^{-n} V_k \right) =  ( \Pi_{k}^0 + k^{-n} V_k) \left(Q_{k} + k^{-(n+2)} R_{k}^{(n+1)} \right)  \end{equation*}
which gives, expanding,
\begin{equation*} \begin{split}  Q_{k} &+ k^{-n} Q_{k} V_{k} + k^{-(n+1)} \left( R_{k}^{(n)} +  U_k^{(n)*} (\theta_{n+1} \circ f_0)(A_{k}) U_k^{(n)} \right) = Q_{k} +  k^{-n} V_{k} Q_{k} \\
& +  k^{-(n+2)} R_{k}^{(n+1)} +  k^{-(2n+1)} V_{k} R_{k}^{(n+1)}.
\end{split}
\end{equation*}
Thus, we wish that
\begin{equation*} [Q_{k},V_{k}] + k^{-1} \left( R_{k}^{(n)} +  U_k^{(n)*} (\theta_{n+1} \circ f_0)(A_{k}) U_k^{(n)} \right) = 0;  \end{equation*}
this will be verified if and only if
\begin{equation*} \frac{1}{i} \left\{ q_{0},v_{0} \right\} + r_{n} + \theta_{n+1} \circ q_{0} = 0 \end{equation*}
which is treated as before.

We conclude thanks to Borel's summation lemma (applied to both $f(.,k)$ and $U_{k}$).
\end{proof}

\section{Bohr-Sommerfeld conditions}

Let $A_{k}$ be a self-adjoint Toeplitz operator on $M$ as in the previous section. Moreover, assume that $a_{0}(m_{0})$ is a global minimum of the principal symbol $a_{0}$, and that $m_{0}$ is the unique point of $M$ with this property. This implies that there exists $E^0 > 0$ such that for every $E \leq E^0$, the level set $a_{0}^{-1}(E)$ is connected and contained in $\Omega_{1}$.

The maximum norm $\| A_{k} \|_{\infty}$ of $A_{k}$ tends to the $L^{\infty}$-norm $\| a_{0} \|_{\infty}$ of $a_{0}$ as $k$ goes to infinity \cite{Bord}; hence, for $k$ large enough, the spectrum of $A_{k}$ is included in the set $\left [ -E^1,E^1 \right]$, where $E^{1} = \| a_{0} \|_{\infty} + 1$.

\subsection{Statement of the result}

Before stating the Bohr-Sommerfeld conditions, it is convenient to show that the sequence $f(.,k)$ can be inverted, and that its inverse still has a good asymptotic expansion.
\begin{lm}
For $k$ large enough, the function $f(.,k)$ that appears in theorem \ref{thm:BSprec} is a bijection from $\left[ -E^1,E^0 \right]$ to its image; more precisely, it is strictly increasing. Moreover, the inverse sequence $g(.,k)$ admits an asymptotic expansion in the $\classe{\infty}$ topology of the form $g(.,k) = \sum_{\ell \geq 0} k^{-\ell} g_{\ell} + O(k^{-\infty})$, uniformly on $\left[ -E^1,E^0 \right]$.
\label{lm:inversemicro}\end{lm}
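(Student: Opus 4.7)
My plan is to split the lemma into two parts: (i) establish that $f(\cdot,k)$ is a smooth strictly increasing bijection onto its image for $k$ sufficiently large, and (ii) construct the $g_\ell$ and verify the $\classe{\infty}$ asymptotic expansion of the true inverse $g(\cdot,k)$.

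For (i), the key input is that $f_0 = g_0^{-1}$ satisfies $f_0'(0) = 1/g_0'(0) > 0$. I would first extend $f_0$ (the extension outside a neighborhood of $0$ being irrelevant for the rest of the paper) to a smooth function with $f_0' \geq c > 0$ on the whole compact interval $[-E^1, E^0]$. The $\classe{\infty}$ asymptotic expansion of $f(\cdot,k)$ provided by Theorem \ref{thm:formenormale} then gives, at the level of the first derivative, $\partial_E f(\cdot,k) = f_0' + O(k^{-1})$ uniformly on $[-E^1, E^0]$, so $\partial_E f(\cdot,k) \geq c/2$ for $k$ larger than some $k_0$. Strict monotonicity then yields the claimed bijectivity onto the image.

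For (ii), I would build the $g_\ell$ by formal inversion. Setting $g_0 := f_0^{-1}$ on $f_0([-E^1, E^0])$, I expand both sides of $g(f(x,k),k) = x$ in powers of $k^{-1}$; matching coefficients via the Fa\`a di Bruno formula gives, for every $\ell \geq 1$, an equation of the shape $g_\ell(f_0(x)) = -\, g_0'(f_0(x))\, f_\ell(x) + P_\ell(x)$, where $P_\ell$ is a universal polynomial expression in $f_0, \dots, f_{\ell-1}, g_0, \dots, g_{\ell-1}$ and their derivatives. Precomposing with $g_0$ in the independent variable defines $g_\ell \in \classe{\infty}(f_0([-E^1, E^0]),\R)$ inductively.

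Finally, I would show that $g(\cdot,k) \sim \sum_\ell k^{-\ell} g_\ell$ in the $\classe{\infty}$ topology. Writing $\tilde g_L := \sum_{\ell=0}^{L-1} k^{-\ell} g_\ell$ and $\tilde f_L := \sum_{\ell=0}^{L-1} k^{-\ell} f_\ell$, the very construction of the $g_\ell$ ensures $\tilde g_L \circ \tilde f_L = \mathrm{id} + O(k^{-L})$ in $\classe{\infty}$ on $[-E^1, E^0]$, while $f(\cdot,k) - \tilde f_L = O(k^{-L})$ in $\classe{\infty}$ by hypothesis; composing and using that $\tilde g_L$ has uniformly (in $k$) bounded $\classe{\infty}$ norm on the image gives $\tilde g_L(f(x,k),k) = x + O(k^{-L})$ in $\classe{\infty}$. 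Substituting $x = g(E,k)$ transfers this to the inverse, and differentiating the identity $f(g(E,k),k) = E$ iteratively, with the uniform lower bound $\partial_x f(\cdot,k) \geq c/2$ to solve for $\partial_E^j g$, propagates the $O(k^{-L})$ estimate to all derivatives. The main obstacle is precisely this last bookkeeping step, namely ensuring that the $\classe{\infty}$ error bounds remain uniform in $k$; however, it is a routine application of the smooth inverse function theorem once the uniform positive lower bound on $\partial_x f(\cdot,k)$ is in hand.
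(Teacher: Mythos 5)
Your proof is correct and takes essentially the same approach as the paper: part (i) via the uniform positive lower bound on $\partial_E f(\cdot,k)$ inherited from the $\mathcal{C}^{\infty}$ asymptotic expansion, and part (ii) by formal inversion of the series followed by induction on the order of the error. The paper's own proof is only a two-line sketch invoking Borel's summation lemma and Taylor's formula with integral remainder; your Fa\`a di Bruno bookkeeping and inverse-function-theorem estimate fill in exactly that gap, the only cosmetic difference being that you verify the expansion of the actual inverse directly rather than Borel-summing a candidate and comparing.
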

\begin{proof}
The first assertion follows from the mean value inequality
\begin{equation*}\forall k \geq 1 \quad \forall E,\tilde{E} \in \left[ -E^1,E^0 \right] \qquad |f(E,k) - f(\tilde{E},k)| \geq \inf_{\left[ -E^1,E^0 \right]} |f'(.,k)| |E-\tilde{E}|\end{equation*}
and the fact that $f'(.,k)$ is bounded below by some positive constant. This implies that for $k$ sufficiently large, $f(.,k)$ is strictly monotone on $\left[ -E^1, E^0 \right]$; since $f_{0}'(0) > 0$, $f(.,k)$ is in fact strictly increasing. For the second part, the proof is once again based on Borel's summation lemma; it is done by induction thanks to Taylor's formula with integral remainder.
\end{proof}
We can therefore introduce the sequences 
\begin{equation} E_{k}^{(j)} = g\left(k^{-1} \left(j+\frac{1}{2} \right),k \right), \quad j \in \N \label{eq:defEk}\end{equation}
for $k$ large enough and for $j$ such that $k^{-1}\left(j+\frac{1}{2}\right)$ belongs to the set $\left[ f(-E^1,k),f(E^0,k) \right]$. Since $g(.,k)$ is also strictly increasing, the $E_{k}^{(j)}$ are ordered:
\begin{equation*} \forall j \in \N, \quad E_{k}^{(j)} < E_{k}^{(j+1)}. \end{equation*}
We can be more precise; fix $j \in \N$ and write 
\begin{equation*} E_{k}^{(j)} = g_{0}\left( k^{-1}\left( j + \frac{1}{2} \right) \right) + k^{-1} g_{1}\left( k^{-1}\left( j + \frac{1}{2} \right) \right) + O(k^{-2}). \end{equation*}
Then, applying Taylor's formula with integral remainder, we get
\begin{equation} E_{k}^{(j)} = \underbrace{g_{0}(0)}_{=0} + k^{-1} \left( g_{1}(0) + \left( j + \frac{1}{2} \right)g_{0}'(0) \right) + O(k^{-2}). \label{eq:DAEk}\end{equation}
One must be careful with this estimate: the $O(k^{-2})$ remainder is no longer uniform with respect to $j$.
Denote by $\lambda_{k}^{(1)} \leq \lambda_{k}^{(2)} \leq \ldots \leq \lambda_{k}^{(j)} \leq \ldots$ the eigenvalues of $A_{k}$.
The main result of this section is the following theorem.
\begin{thm}
There exists a positive integer $k_{0} \geq 1$ such that for every integer $N \geq 1$ and for every $E \leq E^0$, there exist a constant $C_{N} > 0$ such that for $k \geq k_{0}$:
\begin{equation} \left( \lambda_{k}^{(j)} \leq E \ \text{or} \ E_{k}^{(j)} \leq E \right) \Rightarrow \left| \lambda_{k}^{(j)} - E_{k}^{(j)} \right| \leq C_{N} k^{-N}. \label{eq:DA}\end{equation} 
Moreover, for $k$ large enough, all the eigenvalues of $A_{k}$ smaller than $E^0$ are simple. In particular, we obtain an asymptotic expansion to every order for the eigenvalues of $A_{k}$ smaller than $E^0$.
\label{thm:BSprec}\end{thm}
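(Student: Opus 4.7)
The plan is to combine the microlocal normal form from Theorem~\ref{thm:formenormale} with the explicitly known spectrum of the harmonic oscillator $Q_k$. Since $U_k^* f(A_k, k) U_k \sim Q_k$ on $\Omega_2$, eigenvalues of $A_k$ near the minimum should correspond, via $g = f^{-1}$, to the values $k^{-1}(j+\tfrac{1}{2})$ in the spectrum of $Q_k$, which, transported back by $g(\cdot,k)$, are exactly the $E_k^{(j)}$.

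First I would establish a microlocalization statement: any $L^2$-normalized eigenfunction $\psi_k$ of $A_k$ with $\lambda_k^{(j)} \leq E^0$ has microsupport contained in $a_0^{-1}([0, E^0+\epsilon]) \subset \Omega_1$. Choosing a smooth cutoff $\eta$ equal to $1$ on $[-E^1, E^0]$ and supported in $[-E^1-1, E^0+\epsilon]$, one has $\eta(A_k)\psi_k = \psi_k$ while the Toeplitz operator $\eta(A_k)$ has principal symbol $\eta \circ a_0$ supported in $\Omega_1$ for $E^0$ small enough; this confines the microsupport of $\psi_k$ to the same region.

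Next I would set up the two-sided quasi-mode comparison. For each $j \in \N$ with $k^{-1}(j+\tfrac{1}{2}) \leq f(E^0, k)$, the function $\varphi_{j,k}$ is an eigenfunction of $Q_k$ with eigenvalue $k^{-1}(j+\tfrac{1}{2})$, microlocalized near $\{q_0 \leq f_0(E^0)\} \subset \Omega_2$ (shrinking $E^0$ if necessary). Setting $\tilde\psi_{j,k} := U_k \varphi_{j,k}$, the microlocal unitarity $U_k^* U_k \sim \Pi_k^0$ on $\Omega_2$ gives $\|\tilde\psi_{j,k}\| = 1 + O(k^{-\infty})$, and the normal form yields
\begin{equation*} \bigl\|\bigl(f(A_k,k) - k^{-1}(j+\tfrac{1}{2})\bigr)\tilde\psi_{j,k}\bigr\| = O(k^{-\infty}). \end{equation*}
By the spectral theorem some eigenvalue of $f(A_k,k)$ lies within $O(k^{-\infty})$ of $k^{-1}(j+\tfrac{1}{2})$; functional calculus identifies it as $f(\lambda,k)$ for an eigenvalue $\lambda$ of $A_k$, and Lemma~\ref{lm:inversemicro} converts this into $|\lambda - E_k^{(j)}| = O(k^{-\infty})$. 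Conversely, for each eigenvalue $\lambda_k^{(j)} \leq E^0$ with eigenfunction $\psi_k$, the microlocalization step makes $U_k^* \psi_k$ a unit-norm quasi-mode of $Q_k$ with quasi-eigenvalue $f(\lambda_k^{(j)},k)$, which must therefore be within $O(k^{-\infty})$ of some $k^{-1}(n+\tfrac{1}{2})$, hence $\lambda_k^{(j)}$ within $O(k^{-\infty})$ of $E_k^{(n)}$.

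Finally, to upgrade this matching into the precise bound of~\eqref{eq:DA} and deduce simplicity, I would use that $\{\tilde\psi_{j,k}\}_j$ is asymptotically orthonormal (because $\{\varphi_{j,k}\}$ is orthonormal and $U_k^* U_k \sim \Pi_k^0$). A Bessel-type argument then guarantees that every spectral window of $A_k$ of width much larger than $O(k^{-\infty})$ around $E_k^{(j)}$ contains at least as many eigenvalues, with multiplicities, as there are quasi-eigenvalues falling in the window. From~\eqref{eq:DAEk} the $E_k^{(j)}$ are separated by $\sim k^{-1} g_0'(0) > 0$, so for $k$ large enough these windows are disjoint; combined with the reverse direction, each window contains exactly one eigenvalue. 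The monotone orderings of $\lambda_k^{(j)}$ and $E_k^{(j)}$ then force the pairing $\lambda_k^{(j)} \leftrightarrow E_k^{(j)}$, giving simplicity and the statement. The main obstacle will be the counting/Bessel step, which requires enough uniformity in the quasi-mode construction to ensure no eigenvalue is missed and no window gets more than its share.
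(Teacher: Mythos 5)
Your proposal is correct in spirit but takes a genuinely different route from the paper. The paper bypasses the quasi-mode construction entirely: it applies Lemma~\ref{lm:BNF} (a min-max comparison inequality from~\cite{SanLau}) at the operator level. Concretely, it shows that $\|(f(A_k,k)-U_kQ_kU_k^*)\Pi^{f(A_k,k)}_{\leq f(E,k)}\|$ and $\|(\Pi_k-U_kU_k^*)\Pi^{f(A_k,k)}_{\leq f(E,k)}\|$ are $O(k^{-N})$ (using a cutoff $\eta(A_k)$ to confine the microsupport to $\Omega_1$), applies Lemma~\ref{lm:BNF} to obtain the one-sided bound $k^{-1}(j+\tfrac12)\lesssim f(\lambda_k^{(j)},k)$, then repeats the argument with the roles of $Q_k$ and $f(A_k,k)$ swapped (using a cutoff $\rho(Q_k)$) to get the reverse bound, and finally inverts $f(\cdot,k)$ via Lemma~\ref{lm:inversemicro}. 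The decisive advantage of this route over yours is precisely the step you flagged as the main obstacle: because Lemma~\ref{lm:BNF} is phrased in terms of the \emph{ordered} sequences $\lambda_j^A$ and $\lambda_j^B$, the index $j$ is preserved automatically, and the two one-sided bounds immediately yield the matching $\lambda_k^{(j)}\leftrightarrow E_k^{(j)}$ and simplicity once the $E_k^{(j)}$ are known to be $\sim ck^{-1}$-separated. Your quasi-mode route, building $\tilde\psi_{j,k}=U_k\varphi_{j,k}$, establishes that the two spectra agree as subsets of $\R$ up to $O(k^{-\infty})$, but converting this into an index-preserving bijection requires the asymptotic-orthonormality/Bessel counting argument you outline, which involves tracking dimensions of spectral subspaces in disjoint windows and is correct but is real additional work; it essentially reproves what the min-max lemma packages abstractly. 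Your approach has the compensating merit of being more explicit (one actually sees the approximate eigenfunctions, which is useful e.g. for the eigenfunction plots in the torus example), whereas the paper's argument is shorter and more robust with respect to uniformity in $N$ and $E$.
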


We will need the following lemma, based on the min-max principle.

\begin{lm}[{\cite[lemma $3.3$]{SanLau}}]
Let $A$ and $B$ be two self-adjoint operators acting, respectively, on the Hilbert spaces $\Hil'$ and $\Hil$, both bounded from below. Denote by $\Pi_{I}^A$ the spectral projection of $A$ on $I$ and by $\lambda_{1}^A \leq \lambda_{2}^A \leq \ldots \leq \lambda_{j}^A \leq \ldots$ the increasing sequence of eigenvalues below the essential spectrum of $A$; if there is a finite number $j_{\text{max}}$ of such eigenvalues, extend the sequence for $j > j_{\text{max}}$ by setting $\lambda_{j}^A = \lambda_{\text{ess}}^A$, where $\lambda_{\text{ess}}^A$ is the infimum of the essential spectrum of $A$. Introduce the same notations for $B$. Suppose that there exist a bounded operator $U:\Hil \rightarrow \Hil'$, an interval $I = (-\infty,E]$, and constants $C > 0$, $c \in (0,1)$ such that $U \Pi_{I}^B(\Hil) \subset \text{Dom}(A)$ and 
\begin{equation*} \left\| (U^* A U - B) \Pi_{I}^B \right\| \leq C \end{equation*} 
and
\begin{equation*} \left\| U^*U\Pi_{I}^B -  \Pi_{I}^B \right\| \leq c. \end{equation*}
Then, for all $j$ such that $\lambda_{j}^B \leq E$, one has 
\begin{equation*} \lambda_{j}^A \leq (\lambda_{j}^B + C) \left( 1 + \frac{c}{1-c} \right). \end{equation*}
\label{lm:BNF}\end{lm}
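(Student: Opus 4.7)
The plan is to apply the Courant--Fischer min--max characterization of $\lambda_j^A$ to the trial subspace $UV_j$, where $V_j$ is a carefully chosen $j$-dimensional subspace of the low-lying spectral subspace of $B$. Under the extension convention stated in the lemma, one has
\[
\lambda_j^A = \inf_{\substack{V \subset \mathrm{Dom}(A) \\ \dim V = j}} \sup_{\psi \in V \setminus \{0\}} \frac{\langle A\psi,\psi\rangle}{\|\psi\|^2},
\]
and analogously for $B$. Fix $j$ with $\lambda_j^B \leq E$ and take $V_j \subset \Pi_I^B(\Hil)$ of dimension $j$ so that $\langle B\phi,\phi\rangle \leq \lambda_j^B\|\phi\|^2$ for all $\phi \in V_j$ (in the extension regime where $\lambda_j^B$ sits at the essential spectrum of $B$, relax this bound to $\lambda_j^B + \varepsilon$ and let $\varepsilon \to 0$ at the end). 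By hypothesis $UV_j \subset \mathrm{Dom}(A)$, so $UV_j$ is admissible for the min--max for $A$ once one has checked that $\dim UV_j = j$.

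The injectivity of $U$ on $V_j$ is the first key computation and uses the second hypothesis: for $\phi \in V_j$ one has $\Pi_I^B \phi = \phi$, so
\[
\|\phi\|^2 - \|U\phi\|^2 = \langle(\Pi_I^B - U^*U\Pi_I^B)\phi,\phi\rangle \leq c\|\phi\|^2,
\]
whence $\|U\phi\|^2 \geq (1-c)\|\phi\|^2 > 0$. The Rayleigh quotient of $A$ on $UV_j$ is then controlled using the first hypothesis: for $\psi = U\phi$ with $\phi \in V_j$, the identity $\Pi_I^B\phi = \phi$ gives
\[
\langle A\psi,\psi\rangle = \langle U^*AU\phi,\phi\rangle = \langle B\phi,\phi\rangle + \langle(U^*AU - B)\Pi_I^B \phi,\phi\rangle \leq (\lambda_j^B + C)\|\phi\|^2,
\]
and dividing by $\|\psi\|^2 \geq (1-c)\|\phi\|^2$, then inserting $UV_j$ into the min--max formula for $A$, yields
\[
\lambda_j^A \leq \frac{\lambda_j^B + C}{1-c} = (\lambda_j^B + C)\Bigl(1 + \frac{c}{1-c}\Bigr).
\]

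The main obstacle I anticipate is a sign issue in the final division: the passage from $\langle A\psi,\psi\rangle \leq (\lambda_j^B + C)\|\phi\|^2$ to the bound by $(\lambda_j^B + C)/(1-c)$ via $\|\phi\|^2 \leq \|U\phi\|^2/(1-c)$ preserves the inequality only when $\lambda_j^B + C \geq 0$. In the application intended in this paper, $B$ is the harmonic oscillator $Q_k$ with non-negative spectrum and $C$ is small, so this is automatic; for the general statement one first shifts $A$ and $B$ by a common positive constant, which leaves both hypotheses unchanged. A secondary subtlety is the density argument needed to run the min--max over $\mathrm{Dom}(A)$ rather than the form domain of $A$, and the $\varepsilon \to 0$ approximation in the extension regime, both of which are standard.
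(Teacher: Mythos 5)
The paper does not prove this lemma itself; it imports it verbatim from \cite{SanLau}, so there is no internal proof to compare against. Your min--max strategy with the trial subspace $UV_j$ is the natural one and is almost certainly what the cited reference does; the two core estimates (injectivity of $U$ on $V_j$ from the second hypothesis, and the Rayleigh quotient bound from the first) are computed correctly, and your handling of the extension convention and the form-domain subtlety is fine.

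Where you go wrong is the proposed fix for the sign issue you correctly flagged. Shifting $A\mapsto A+a$, $B\mapsto B+a$ does \emph{not} leave the hypotheses unchanged. To keep $\Pi_I^B$ the same you must also shift $I$ to $I+a$, and then
\begin{equation*}
\bigl(U^*(A+a)U-(B+a)\bigr)\Pi_I^B=(U^*AU-B)\Pi_I^B+a\,(U^*U-\Pi_I^B)\Pi_I^B,
\end{equation*}
whose norm is bounded only by $C+ac$, not by $C$. Unwinding the conclusion after the shift yields $\lambda_j^A\le(\lambda_j^B+C+2ac)/(1-c)$, so the extra $2ac/(1-c)$ does not vanish unless $a\to0$, which is exactly when you lose the positivity you needed. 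In fact the statement is genuinely false for $\lambda_j^B+C<0$: take $\Hil=\Hil'=\C$, $B=-4$, $A=-10/3$, $U=\sqrt{3/2}$, $I=(-\infty,0]$, so that $\Pi_I^B=1$, $C=|U^*AU-B|=1$, $c=|U^*U-1|=1/2$; the conclusion would force $-10/3\le(-4+1)\cdot2=-6$, which is false. So the lemma as written needs the additional hypothesis $\lambda_j^B+C\ge0$ (or, say, $B\ge0$); in the paper's application $B$ is essentially the harmonic oscillator $Q_k$ with non-negative spectrum and $C=O(k^{-N})$, so this is automatic, and with that extra assumption your argument closes correctly. Your instinct that there was a sign obstruction was exactly right; the error was believing it could be removed for free.
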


\begin{proof}[Proof of theorem \ref{thm:BSprec}]

Fix $E$ in $\left[-E^{1},E^0\right]$. Let $\mathcal{J}$ be an open neighbourhood of $\left[-E^{1},E\right]$ such that the open set $a_0^{-1}(\mathcal{J}) $ is contained in $\Omega_{1}$, and let $\eta: \R \rightarrow \R$ be a smooth function equal to $1$ on $\left[-E^{1},E\right]$ and $0$ outside $\mathcal{J}$.  Consider the Toeplitz operator $R_{k} = \eta(A_{k})$ and set $B_{k} = \left( f(A_{k},k) - U_{k} Q_{k} U_{k}^{*} \right) R_{k}$. By the choice of $R_{k}$, the microsupport of $B_{k}$ is a subset of $\Omega_{1}$. Moreover, $f(A_{k},k)$ is microlocally equal to $U_{k} Q_{k} U_{k}^{*}$ on $\Omega_{1}$. These two facts imply that $B_{k}$ is negligible; since $M$ is compact, this yields that for every $N \geq 1$, there exists a positive constant $C_{N}$ such that
\begin{equation*} \| B_{k} \| \leq C_{N} k^{-N}. \end{equation*}
Now, let $\Pi_{\leq f(E,k)}^{f(A_{k},k)}$ be the spectral projection associated to $f(A_{k},k)$ and corresponding to the eigenvalues smaller than $f(E,k)$. If $(\lambda,\varphi)$ is an eigencouple for $A_{k}$ with $\lambda \leq E$, then $R_{k}\varphi = \eta(\lambda) \varphi = \varphi$. This implies that for every $\phi$ in $\Pi_{\leq f(E,k)}^{f(A_{k},k)}(\Hil_{k})$, $R_{k} \phi = \phi$, and consequently
\begin{equation*} \left\| \left( f(A_{k},k) - U_{k} Q_{k} U_{k}^{*} \right) \Pi_{\leq f(E,k)}^{f(A_{k},k)} \right\| \leq C_{N} k^{-N}. \end{equation*}
Similarly, there exists $c_{N} > 0$ such that
\begin{equation*} \left\| \left( \Pi_{k} - U_{k} U_{k}^{*} \right) \Pi_{\leq f(E,k)}^{f(A_{k},k)} \right\| \leq c_{N} k^{-N}. \end{equation*}
So lemma \ref{lm:BNF} shows that if $f(\lambda_{k}^{(j)},k) \leq f(E,k)$, the inequality
\begin{equation*} k^{-1}\left( j + \frac{1}{2} \right) \leq \left( 1 + \frac{c_{N}k^{-N}}{1 - c_{N}k^{-N}}\right) \left( f(\lambda_{k}^{(j)},k) + C_{N} k^{-N} \right)  \end{equation*}
holds. So for $k$ large enough (independently of $E$), we have $ k^{-1}\left( j + \frac{1}{2} \right) \leq f(E,k)$.

Now, let $\rho$ be a smooth function equal to $1$ on $\left[f_{0}(-E^{1}),f_{0}(E)\right]$ and vanishing outside an open neighbourhood $\mathcal{K}$ of   $\left[f_{0}(-E^{1}),f_{0}(E)\right]$ such that $q_{0}^{-1}(\mathcal{K}) \subset \Omega_{2}$. Thanks to proposition \ref{prop:funcBarg}, we can consider the Toeplitz operator $S_{k} = \rho(Q_{k})$, and set $C_{k} = \left( U_{k}^{*} f(A_{k},k) U_{k} -  Q_{k}  \right) S_{k}$. Since $S_k$ belongs to every $\mathcal{T}_j$, $j < 0$, $C_k$ belongs to $\mathcal{T}_0$ and is thus a bounded operator $\Barg_k \rightarrow \Barg_k$. Moreover, by construction, it is negligible. Hence there exists a positive constant $\tilde{C}_{N}$ such that 
\begin{equation*} \left\| C_{k} \right\| \leq \tilde{C}_{N} k^{-N};  \end{equation*}
modifying $C_{N}$ if necessary, we can assume that $\tilde{C}_{N}$ is equal to $C_{N}$. So, introducing the spectral projection $\Pi_{\leq f(E,k)}^{Q_{k}}$ corresponding to the eigenvalues of $Q_{k}$ smaller than $f(E,k)$, the inequality 
\begin{equation*} \left\| \left( U_{k}^{*} f(A_{k},k) U_{k} -  Q_{k}  \right)  \Pi_{\leq f(E,k)}^{Q_{k}} \right\| \leq C_{N} k^{-N} \end{equation*}
holds. Similarly, we have
\begin{equation*} \left\| \left( U_{k}^{*}U_{k} -  \Pi_{k}^0  \right) \Pi_{\leq f(E,k)}^{Q_{k}} \right\| \leq c_{N} k^{-N}. \end{equation*}
Hence, applying again lemma \ref{lm:BNF}, we obtain that
\begin{equation*} f(\lambda_{k}^{(j)},k) \leq \left( 1 + \frac{c_{N}k^{-N}}{1 - c_{N}k^{-N}}\right) \left(  k^{-1}\left( j + \frac{1}{2} \right) + C_{N} k^{-N} \right)  \end{equation*}
as soon as $k^{-1}\left( j + \frac{1}{2} \right) \leq f(E,k)$. This shows that if $f(\lambda_{k}^{(j)},k) \leq f(E,k)$, then 
\begin{equation*} \left| f(\lambda_{k}^{(j)},k) - k^{-1}\left( j + \frac{1}{2} \right) \right| \leq C'_{N} k^{-N} \end{equation*}
for some positive constant $C'_{N}$. Exchanging the roles of $Q_{k}$ and $A_{k}$, and using lemma \ref{lm:inversemicro}, this gives formula (\ref{eq:DA}).

Using this result and the fact that there exists $c > 0$ such that for $j \in \N$, $E_{k}^{(j+1)} - E_{k}^{(j)}$ is equivalent to $ck^{-1}$, we obtain that the $\lambda_{k}^{(j)}$ are simple for $k$ large enough. 
\end{proof}

\subsection{Computation of the principal and subprincipal terms}
\label{subsection:computation}

In order to exploit these results, it remains to compute a few first terms in the asymptotic expansion of the sequence $f(.,k)$. What we can do is relate the principal and subprincipal terms to the actions introduced in section \ref{sect:actions}.
\begin{prop}
Set $I = ]0,E^0[$. Then 
\begin{equation} f_{0} = \frac{1}{2\pi} c_{0}, \quad  f_{1} = \frac{1}{2\pi} c_{1} \end{equation}
on $I$.
\label{prop:f0f1}\end{prop}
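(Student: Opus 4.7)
The plan is to compare Theorem \ref{thm:BSprec} with the \emph{regular} Bohr-Sommerfeld conditions of Charles \cite{Cha2}, which hold throughout the set $I$ of regular values of $a_0$. Since both results describe the same eigenvalues of $A_k$ in $I$ up to $O(k^{-\infty})$, matching the two asymptotic parameterizations of the spectrum will pin down $f_0$ and $f_1$.

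For the principal term, I would argue geometrically. The symplectomorphism $\chi$ sends the level set $\Gamma_E = a_0^{-1}(E)$ onto the circle $q_0^{-1}(f_0(E))$, which bounds a disk of $\omega_0$-area $2\pi f_0(E)$; since $\chi^*\omega_0 = \omega$, the region $\{a_0 \leq E\}$ also has symplectic area $2\pi f_0(E)$. By the prequantum curvature condition and Stokes' theorem, the holonomy of $\nabla$ along $\Gamma_E$ equals $\exp\bigl(i \int_{\{a_0 \leq E\}} \omega \bigr) = \exp(2\pi i f_0(E))$, which by definition of the principal action equals $\exp(i c_0(E))$. With the determination fixed by $c_0(0^+) = 0$ this yields $c_0 = 2\pi f_0$ on $I$.

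For the subprincipal term, I would match the two spectral labelings. Theorem \ref{thm:BSprec} labels the eigenvalues in $I$ by $k f_0(E) + f_1(E) + O(k^{-1}) = j + 1/2$, $j \in \N$; Charles' regular conditions label the same eigenvalues by $\tfrac{k}{2\pi} c_0(E) + \tfrac{1}{2\pi} c_1(E) + \tfrac{\epsilon}{2} + O(k^{-1}) \in \Z$, where $\epsilon$ is the half-form index. Both parameterizations select asymptotically simple eigenvalues with spacing of order $k^{-1}$, so they must agree up to a constant integer shift in the index. Cancelling the $O(k)$ contributions via the principal step, the $O(1)$ comparison then forces $f_1 - c_1/(2\pi)$ to be a constant on $I$ depending on $\epsilon$ and on an integer $n$ recording the offset between the two indexings.

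The main obstacle is showing that this additive constant vanishes under the careful conventions of Section \ref{sect:actions}. Geometrically I would expect $\epsilon = 1$ for the half-form bundle over a loop shrinking to an elliptic point, precisely compensating the $1/2$ coming from the zero-point energy of the harmonic-oscillator model $Q_k$; the remaining integer is then pinned down by letting $E \to 0^+$ and exploiting that the chosen determinations make $c_0, c_1, f_0, f_1$ behave compatibly at the critical value. This bookkeeping of Maslov-type corrections against the harmonic-oscillator shift is the real content of the "careful" definitions of $c_0$ and $c_1$ referenced in the statement; the rest of the argument is the rigid principle that two asymptotic quantization conditions for the same discrete spectrum must agree term by term.
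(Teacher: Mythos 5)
Your argument for the principal term is essentially the paper's: write $c_0(E)$ as a holonomy integral, push forward by $\chi$, apply Stokes, and use that $\chi$ preserves symplectic area so that the enclosed area is $2\pi f_0(E)$. That part is fine.

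For the subprincipal term you diverge from the paper, and there are genuine gaps. The paper's proof is a direct computation: it returns to the normal form construction, picks a convenient $U_k^{(0)}$ whose principal symbol is built from the half-form $\Psi$, computes the subprincipal symbol $r_0$ of $U_k^{(0)*}f_0(A_k)U_k^{(0)}$ via Theorem \ref{thm:FIOssp}, and then unwinds the averaging formula $\theta_1\circ q_0 = -\frac{1}{2\pi}\int_0^{2\pi} r_0\circ\phi^t_{q_0}\,dt$ through several changes of variables to land exactly on $\int_{\Gamma_E}\nu + \int_{\Gamma_E}\kappa_E = c_1(E)$. Your proposal instead compares the quantization rule of Theorem \ref{thm:BSprec} to Charles' regular Bohr--Sommerfeld conditions and argues that the two parameterizations of the same discrete spectrum must agree to each order. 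That idea is reasonable, but as written it is incomplete in two places. First, the ``rigidity'' step is not automatic: two rules that both approximate the eigenvalues to $O(k^{-\infty})$ give, a priori, that $k f_0(E) + f_1(E) - \tfrac{k}{2\pi}c_0(E) - \tfrac{1}{2\pi}c_1(E) - \tfrac{\epsilon}{2}$ differs from an integer by $O(k^{-1})$ only along the discrete sequence of eigenvalues near $E$, and one must argue (using smoothness of $f_1$, $c_1$, and the $\sim ck^{-1}$ spacing) that this forces $f_1 - c_1/(2\pi) - (\epsilon-1)/2$ to be a locally constant integer. Second, and more seriously, determining that this constant is zero is not a ``bookkeeping'' afterthought. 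Your sketch of letting $E\to 0^+$ requires knowing both $\lim_{E\to 0^+}c_1(E)$ (which one can get from the shrinking-loop picture: $\int_{\Gamma_E}\nu\to 0$ while $\int_{\Gamma_E}\kappa_E\to -T_0 a_1(m_0) = -2\pi f_0'(0)a_1(0)$) and $f_1(0)$. But $f_1(0) = \theta_1(0)$ comes from the normal form and is only accessible through the subprincipal symbol $r_0$ at the origin, i.e.\ through Theorem \ref{thm:FIOssp} applied at the critical point with the correct choice of half-form-adapted $U_k^{(0)}$ --- precisely the ingredient your approach was trying to avoid. Without that input your argument only shows $f_1 - c_1/(2\pi)$ is some constant, not that it vanishes. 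You also assert $\epsilon = 1$ without proof (the paper only states it as a remark as well, but your argument leans on it more heavily since it enters the integer congruence).

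So the comparison-of-quantization-rules route is a legitimate and conceptually appealing alternative, and it would buy you independence from the intricate change-of-variable manipulations in the paper's proof of the $f_1$ identity. But to close it you still need: (a) a rigidity lemma for asymptotic quantization conditions with $O(k^{-1})$ spacing; (b) a computation of $f_1(0)$ via Theorem \ref{thm:FIOssp} at the fixed point (where, conveniently, $X_{a_0}$ vanishes and the connection-form corrections drop out); and (c) a proof that $\epsilon = 1$. The paper's direct computation avoids (a) and (c) entirely and folds (b) into a single coherent calculation, which is why it is the route the author takes.
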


\begin{proof}

Let us first compute $f_{0}$. Fix a level $E$ in $I$. Let $\frac{1}{i} \beta$ be the 1-form describing locally the Chern connection on $L$; then $c_{0}(E)$ is given by
\begin{equation*} c_{0}(E) = \int_{\Gamma_{E}} \beta. \end{equation*}
Using the relation $a_{0} \circ \chi^{-1} = g_{0} \circ q_{0}$, we can then write
\begin{equation*} c_{0}(E) = \int_{\mathcal{C}_{E}} (\chi^{-1})^* \beta \end{equation*}
where $\mathcal{C}_{E}$ is the circle centered at the origin and with radius $\sqrt{2 f_{0}(E)}$. Using Stokes' formula and the fact that $\chi$ is a symplectomorphism, this yields that $c_{0}(E)$ is the area of the disk bounded by $\mathcal{C}_{E}$, that is, if the orientation that we chose is the one giving the positive area (and this is what we will assume in the rest of this section)
\begin{equation} c_{0}(E) = 2\pi f_{0}(E). \label{eq:principal}\end{equation}

\paragraph{}

Now, turn back to the proof of our normal form theorem \ref{thm:formenormale}, where $f_{1}$ is constructed from the subprincipal symbol $r_{0}$ of $U_{k}^{(0)*}f(A_{k},k)U_{k}^{(0)}$. By uniqueness of $f_{1}$, instead of starting from any operator $U_{k}^{(0)}$, we can choose one with symbol $u \otimes v$, where $u$ is constant and $v$ is a square root of $\Psi$. Doing so, we can compute $r_{0}$ thanks to theorem \ref{thm:FIOssp}: 
\begin{equation*} r_{0} = (a_{1} \circ \chi^{-1}) (\ f_{0}' \circ a_{0}   \circ \chi^{-1}) - \nu_{\chi^{-1}(.)}\left(X_{f_{0} \circ a_{0}} \circ \chi^{-1} \right)  \end{equation*}
where $\nu$ is the local connection 1-form associated to $\nabla^{L_1}$. We have $f_{1} = \theta_{1} \circ f_{0}$ with $\theta_{1}$ such that for all $(x,\xi)$ in $\R^2$
\begin{equation*} \left( \theta_{1} \circ q_{0} \right)(x,\xi) =  -\frac{1}{2\pi} \int_{0}^{2\pi} r_{0} \left( \phi_{q_{0}}^t(x,\xi) \right) dt  \end{equation*} 
where $\phi_{q_{0}}^t$ stands for the Hamiltonian flow of $q_{0}$. Since $q_{0} = f_{0} \circ a_{0} \circ \chi^{-1} $, this implies that for $(x,\xi)$ in $\R^2$
\begin{equation*} \begin{split} \left( f_{1} \circ a_{0} \circ \chi^{-1} \right)(x,\xi) =  -\frac{1}{2\pi} \int_{0}^{2\pi} (a_{1} \ f_{0}' \circ a_{0} ) \left(  \chi^{-1}\left(\phi_{q_{0}}^t(x,\xi)\right) \right) dt \\ 
+ \frac{1}{2\pi} \int_{0}^{2\pi} \nu_{\chi^{-1}(\phi_{q_{0}}^t(x,\xi))} \left(X_{f_{0} \circ a_{0}} \left(\chi^{-1}(\phi_{q_{0}}^t(x,\xi))\right) \right) dt. \end{split} \end{equation*}
So, for $m \neq m_{0}$ in $\Omega_{1}$, we have
\begin{equation*} \begin{split} (f_{1} \circ a_{0})(m) = - \frac{1}{2\pi} \int_{0}^{2\pi} (a_{1} \ f_{0}' \circ a_{0} ) \left( \chi^{-1}\left(\phi_{q_{0}}^t(\chi(m))\right) \right) dt \\
 + \frac{1}{2\pi} \int_{0}^{2\pi} \nu_{\chi^{-1}(\phi_{q_{0}}^t(\chi(m)))} \left(X_{f_{0} \circ a_{0}} \left(\chi^{-1}\left(\phi_{q_{0}}^t(\chi(m))\right)\right) \right) dt; \end{split} \end{equation*}
thus, if $E = a_{0}(m)$,
\begin{equation*} \begin{split} f_{1}(E) =  \frac{1}{2\pi} \int_{0}^{2\pi} \nu_{\chi^{-1}(\phi_{q_{0}}^t(\chi(m)))} \left(X_{f_{0} \circ a_{0}} \left(\chi^{-1}\left(\phi_{q_{0}}^t(\chi(m))\right)\right) \right) dt \\
 - \frac{f_{0}'(E)}{2\pi} \int_{0}^{2\pi} a_{1} \left( \chi^{-1}\left(\phi_{q_{0}}^t(\chi(m))\right) \right) dt.  \end{split} \end{equation*}
But $\chi^{-1} \circ \phi_{q_{0}}^t \circ \chi$ is the Hamiltonian flow of $q_{0} \circ \chi = f_{0} \circ a_{0}$, so 
\begin{equation*} \begin{split} f_{1}(E) = \frac{1}{2\pi} \int_{0}^{2\pi} \nu_{\phi_{f_{0} \circ a_{0}}^t(m)} \left(X_{f_{0} \circ a_{0}}(\phi_{f_{0} \circ a_{0}}^t(m)) \right)dt  \\  -\frac{f_{0}'(E)}{2\pi} \int_{0}^{2\pi} a_{1} \left( \phi_{f_{0} \circ a_{0}}^t(m) \right) dt. \end{split} \end{equation*}
Therefore, if $T_{E}$ is the period of the flow $\phi_{a_{0}}^t$ along $\Gamma_{E}$, we have $T_{E} = 2\pi f_{0}'(E)$ for $E$ close to $0$, and a change of variable gives
\begin{equation*} \begin{split} f_{1}(E) = \frac{1}{2\pi f_{0}'(E)} \int_{0}^{2\pi} \nu_{\phi_{f_{0} \circ a_{0}}^{tf_{0}'(E)}(m)} \left(X_{f_{0} \circ a_{0}} \left( \phi_{f_{0} \circ a_{0}}^{t f_{0}'(E)}(m) \right) \right)dt \\
 - \frac{1}{2\pi} \int_{0}^{T_{E}} a_{1} \left( \phi_{f_{0} \circ a_{0}}^{tf_{0}'(E)}(m) \right) dt \end{split} \end{equation*}
which yields, since the Hamiltonian vector field associated to $f_{0} \circ a_{0}$ is $X_{f_{0} \circ a_{0}} = (f_{0}' \circ a_{0}) X_{a_{0}}$, and hence $ \phi_{f_{0} \circ a_{0}}^{t f_{0}'(E)}(m) = \phi_{a_{0}}^{t}(m)$:
\begin{equation*} \begin{split} f_{1}(E) = \frac{1}{2\pi f_{0}'(E)} \int_{0}^{2\pi} \nu_{\phi_{a_{0}}^t(m)} \left( f_{0}'(E) X_{a_{0}}(\phi_{a_{0}}^t(m)) \right)dt \\
- \frac{1}{2\pi} \underbrace{\int_{0}^{T_{E}} a_{1} \left( \phi_{a_{0}}^{t}(m) \right) dt}_{= -\int_{\Gamma_{E}} \kappa_{E} } \end{split} \end{equation*}
and by linearity of $\nu$
\begin{equation*}  f_{1}(E) = \frac{1}{2\pi} \underbrace{\int_{0}^{2\pi} \nu_{\phi_{a_{0}}^t(m)} \left( X_{a_{0}}(\phi_{a_{0}}^t(m)) \right)dt}_{= \int_{\Gamma_{E}} \nu } + \frac{1}{2\pi} \int_{\Gamma_{E}} \kappa_{E}.  \end{equation*}
The right term of this equality is precisely equal to $c_{1}(E)$; so we have on $I$
\begin{equation} c_{1} = 2 \pi f_{1}. \end{equation}

\end{proof}

To conclude, we can show that $\epsilon = 1$ on $I$, so the result of theorem \ref{thm:BSprec} matches the usual Bohr-Sommerfeld conditions on the set $I$ of regular values.

\subsection{First terms of the asymptotic expansion of the eigenvalues}

Theorem \ref{thm:BSprec} and formula (\ref{eq:DAEk}) give an asymptotic expansion for the eigenvalues of $A_{k}$ smaller than $E^0$. Fix $j \in \N$; for $k$ large enough, one has
\begin{equation*} \lambda_{k}^{(j)} = k^{-1} \left( g_{1}(0) + \left( j + \frac{1}{2} \right)g_{0}'(0) \right) + O(k^{-2}). \end{equation*}
We can be more precise, since we know the value of $g_{1}(0)$: by definition of $g(.,k)$, we have $ g_{1} = - (f_{1} \circ g_{0}) g'_{0}$, and the computation made in the previous part leads to $ f_{1}(0) = - a_{1}(0) f_{0}'(0)$; consequently, $ g_{1}(0) = a_{1}(0)$ and
\begin{equation} \lambda_{k}^{(j)} = k^{-1} \left( a_{1}(0) + \left( j + \frac{1}{2} \right)g_{0}'(0) \right) + O(k^{-2}). \end{equation}
But $g_{0}'(0) = \frac{1}{f_{0}'(0)}$; moreover, it is standard that the principal action $c_{0}$ is smooth even at the critical value $E=0$. Hence, thanks to formula (\ref{eq:principal}), one has
\begin{equation} \lambda_{k}^{(j)} = k^{-1} \left( a_{1}(0) +  \frac{2\pi \left( j + \frac{1}{2} \right)}{c_{0}'(0)} \right) + O(k^{-2}). \label{eq:firstev} \end{equation}
In particular, the gap between two consecutive eigenvalues is given by
\begin{equation} \lambda_{k}^{(j+1)} - \lambda_{k}^{(j)} = \frac{2\pi k^{-1}}{c_{0}'(0)}  + O(k^{-2}). \label{eq:gap}\end{equation}

\section{An example on the torus} 

The aim of this section is to give numerical evidence for our results by investigating the case of a particular Toeplitz operator on the torus $\mathbb{T}$ of real dimension $2$. One can find the details of the quantization of $\mathbb{T}$ in \cite{CM}, where the authors investigate some conjectures on knot states; let us briefly recall its main ingredients.

\subsection{The setting}
Endow $\R^2$ with the linear symplectic form $\omega_{0}$ and consider a lattice $\Lambda$ with symplectic volume $4\pi$. The Heisenberg group $H = \R^2 \times U(1)$ with product
\begin{equation*}(x,u).(y,v) = \left(x+y,uv\exp\left(\frac{i}{2}\omega_{0}(x,y)\right) \right) \end{equation*}
acts on the trivial bundle $L_0 \rightarrow \R^2$, with action given by the same formula. This action preserves the prequantum data, and the lattice $\Lambda$ injects into $H$; therefore, the fiber bundle $L_{0}$ reduces to a prequantum bundle $L$ over $\mathbb{T} = \R^2 \slash \Lambda$. The action extends to the fiber bundle $L_{0}^k$ by
\begin{equation*} (x,u).(y,v) = \left(x+y,u^k v\exp\left(\frac{ik}{2}\omega_{0}(x,y)\right) \right) \end{equation*}
and naturally induces an action
\begin{equation*} T^*:  \Lambda  \rightarrow  \text{End}\left(\Gamma \left(\R^2,L_{0}^k \right)\right), \quad u  \mapsto  T_{u}^*. \end{equation*}
The Hilbert space $\Hil_{k} = H^0(M,L^k)$ can naturally be identified to the space $\Hil_{\Lambda,k}$ of holomorphic sections of $L_{0}^k \rightarrow \R^2$ which are invariant under the action of $\Lambda$, endowed with the hermitian product
\begin{equation*} \langle \varphi,\psi \rangle = \int_{D} \varphi \bar{\psi} \ |\omega_{0}| \end{equation*}
where $D$ is the fundamental domain of the lattice. Furthermore, $\Lambda \slash 2k$ acts on $\Hil_{\Lambda,k}$. Let $e$ and $f$ be generators of $\Lambda$ satisfying $\omega_{0}(e,f) = 4 \pi$; one can show that there exists an orthonormal basis $(\psi_{\ell})_{\ell \in \Z \slash 2k\Z}$ of $\Hil_{\Lambda,k}$ such that
\begin{equation} \forall \ell \in \Z \slash 2k\Z \qquad \left\{\begin{array}{c} T^*_{e/2k} \psi_{\ell} = w^{\ell} \psi_{\ell}  \\ T^*_{f/2k} \psi_{\ell} = \psi_{\ell + 1} \end{array}\right. \end{equation}
with $w = \exp\left( \frac{i \pi}{k} \right)$. The sections $\psi_{\ell}$ can be expressed in terms of $\Theta$ functions.

Set $M_{k} = T^*_{e/2k}$ and $L_{k} = T^*_{f/2k}$. Let $(q,p)$ be coordinates on $\R^2$ associated to the basis $(e,f)$ and $\left[q,p\right]$ be the equivalence class of $(q,p)$. Both $M_{k}$ and $L_{k}$ are Toeplitz operators, with respective principal symbols $ \left[q,p\right]  \mapsto  \exp(2 i \pi p)$ and $ \left[q,p\right] \mapsto \exp(2 i \pi q) $, and vanishing subprincipal symbols. Consequently
\begin{equation*} A_{k} = M_{k} + M_{k}^{-1} + L_{k} + L_{k}^{-1} \end{equation*}
is a Toeplitz operator on $\mathbb{T}$ with principal symbol 
\begin{equation} a_{0}(q,p) = 2 \left( \cos(2\pi p) + \cos(2\pi q) \right) \end{equation}
and vanishing subprincipal symbol. Its matrix in the basis $(\psi_{\ell})_{\ell \in \Z \slash 2k\Z}$ is
\begin{equation*}  \begin{pmatrix} 2 \alpha_0 & 1 & 0 & \ldots & 0 & 1 \\ 1 & \ddots & \ddots & \ddots &  & 0\\ 0 & \ddots & \ddots & \ddots & \ddots & \vdots \\ \vdots & \ddots & \ddots & \ddots & \ddots & 0 \\ 0 &  & \ddots & \ddots  & \ddots & 1 \\ 1 & 0 &  \ldots & 0 & 1 & 2 \alpha_{2k-1}  \end{pmatrix}  \end{equation*}
where
\begin{equation*} \alpha_{\ell} = \cos\left( \frac{\ell \pi}{k} \right).  \end{equation*}
The principal symbol $a_{0}$ is known as Harper's Hamiltonian. It admits a global minimum at $m_{0} = \left[1/2, 1/2 \right]$, with $a_{0}\left(m_{0} \right) = -4$. Figure \ref{fig:contour} is a contour plot of this function on the fundamental domain. In figure \ref{fig:spectre}, we derived numerically the spectrum of $A_{k}$. Figure \ref{fig:module} shows the modulus of the eigenfunction associated to the eigenvalue closest to a prescribed level $E$.

In this situation, we can express $c_{0}'(0)$, and so, by equation (\ref{eq:firstev}), the first eigenvalues of $A_{k}$. Indeed, the symplectic form on $\mathbb{T}$ is $\omega = 4\pi dp \wedge dq$ and the hessian of $a_{0}$ at $m_{0} = \left[1/2,1/2 \right]$ is given by:
\begin{equation*} d^2a_{0}(m_{0}) =  8 \pi^2 I_{2}  \end{equation*}
so it is easy to obtain
\begin{equation*} c'_{0}(0) = 1. \end{equation*}
Consequently, the first eigenvalues are given by:
\begin{equation} \lambda_{k}^{(j)} = - 4 + 2 \pi k^{-1} \left( j + \frac{1}{2} \right) + O(k^{-2}). \label{eq:DAtore}\end{equation}
This is exactly what our simulations show; we plotted the eigenvalues located in a window of length $20 \pi k^{-1}$ around the minimum (so we expect to see about ten eigenvalues) and the result can be seen in figure \ref{fig:zoom}.

\subsection{Figures}

\begin{figure}[hp]
\begin{center}
\includegraphics[scale=0.4]{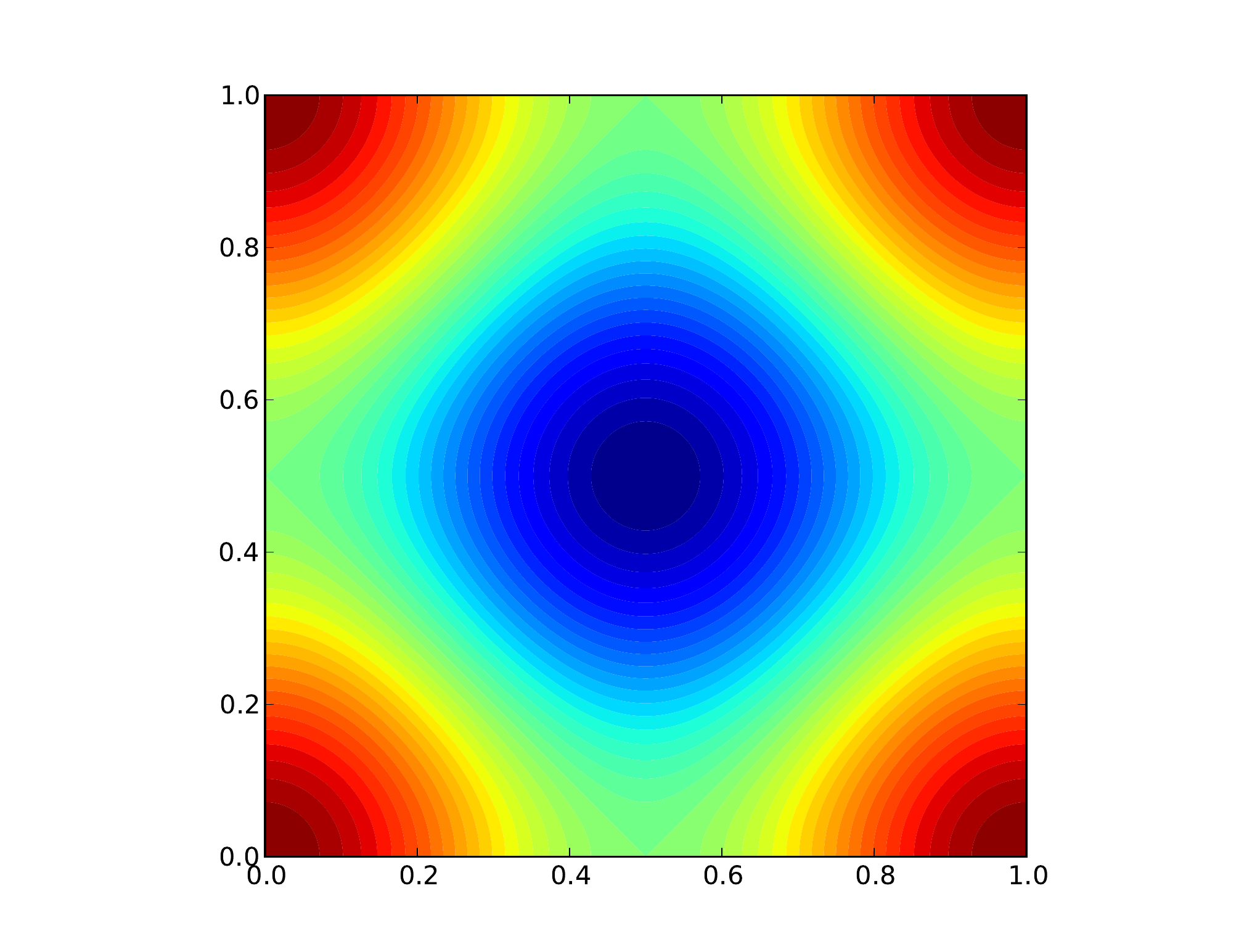} 
\end{center}
\caption{A few level sets of $a_{0}$}
\label{fig:contour}
\end{figure}

\begin{figure}[hp]
\begin{center}
\includegraphics[scale=0.4]{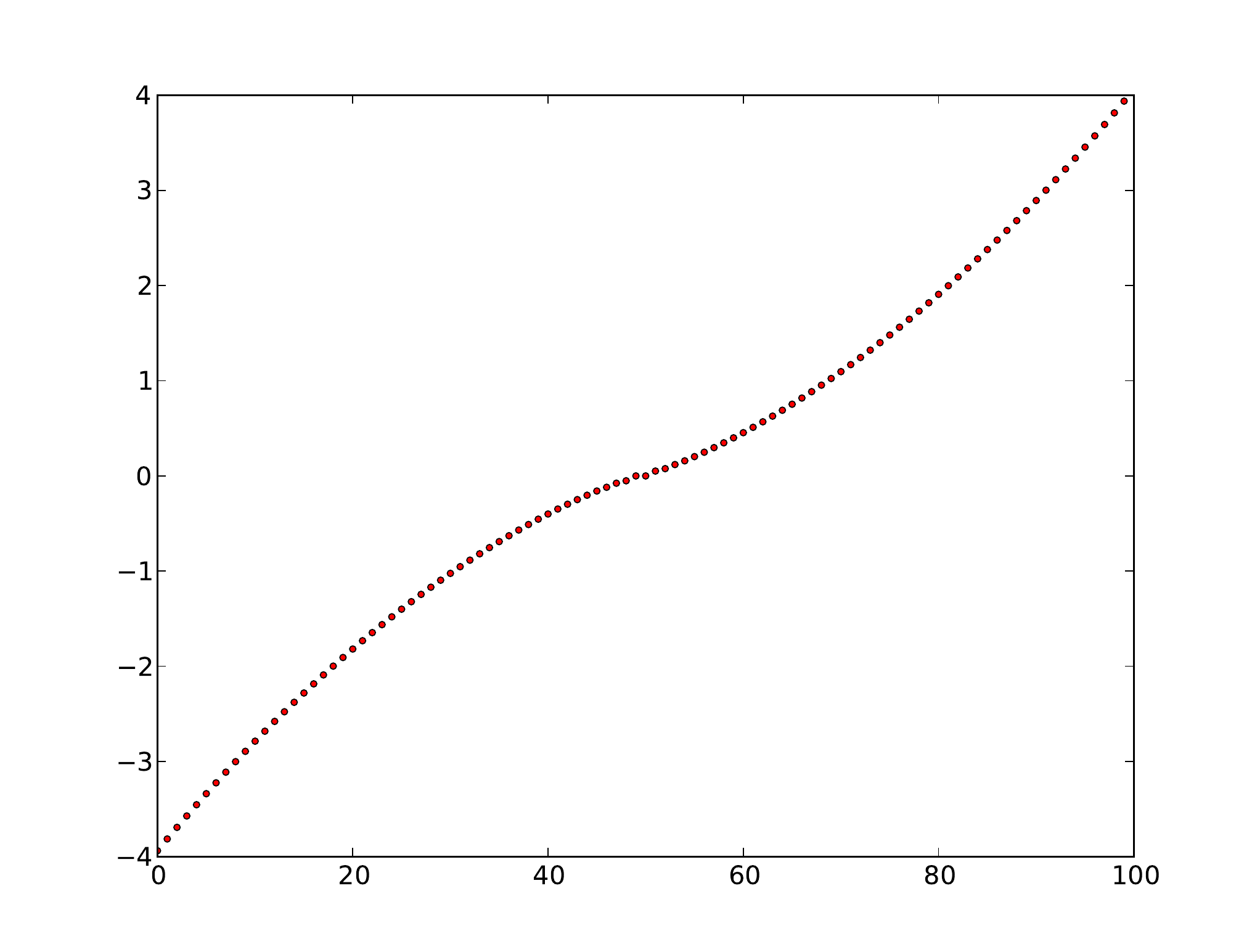} 
\end{center}
\caption{Ordered eigenvalues of $A_{k}$, $k=50$}
\label{fig:spectre}
\end{figure}

\begin{figure}[hp]
\subfigure[Modulus of the eigenfunction associated to the eigenvalue closest to $E$]{\includegraphics[scale=0.32]{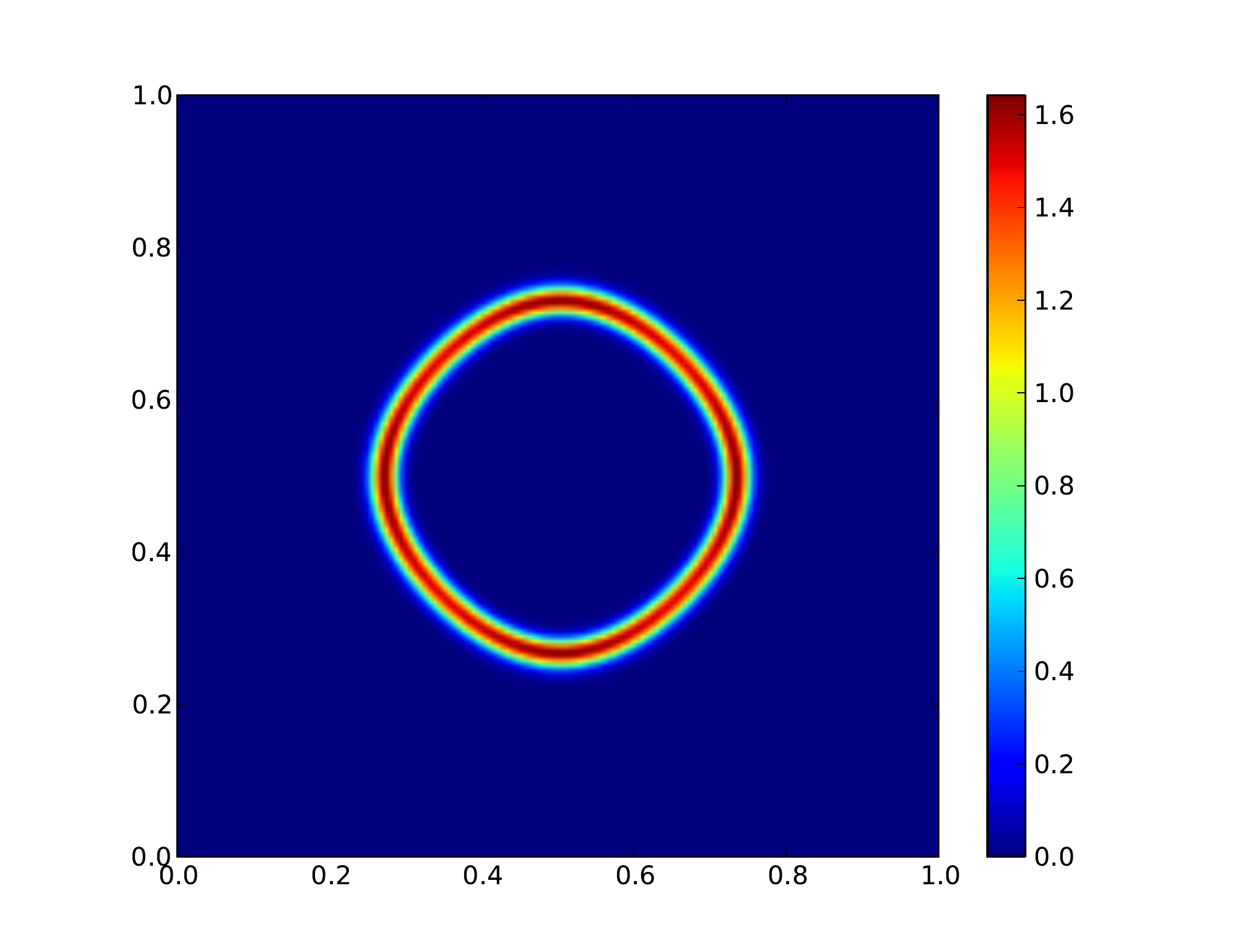} } 
\subfigure[Level set $a_{0}^{-1}(E)$]{\includegraphics[scale=0.32]{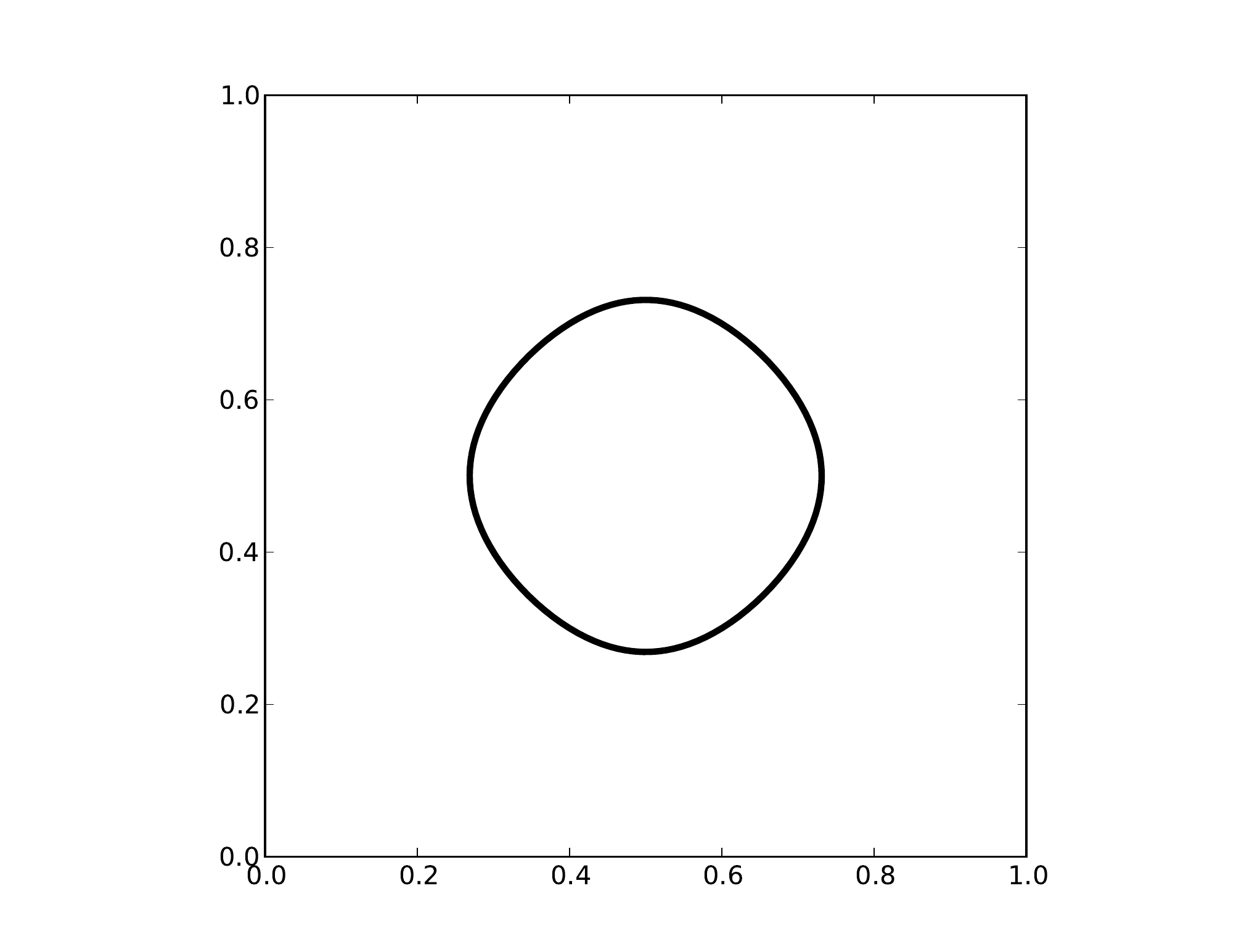} }

\caption{$E = a_{0}(0.7,0.6)$, $k=500$}
\label{fig:module}
\end{figure}

\begin{figure}[hp]
\subfigure[$k=50$]{\includegraphics[scale=0.32]{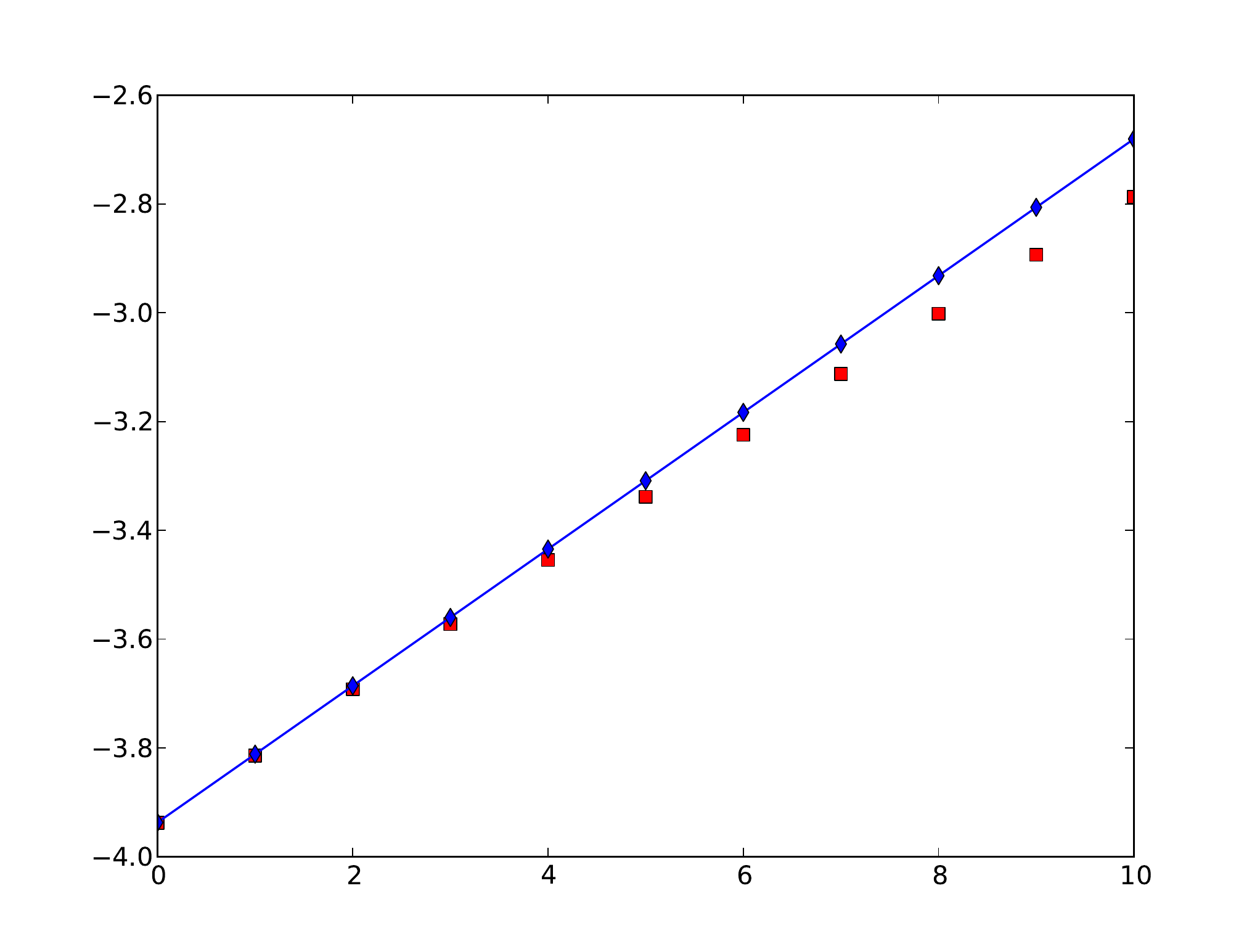} } 
\subfigure[$k=500$]{\includegraphics[scale=0.32]{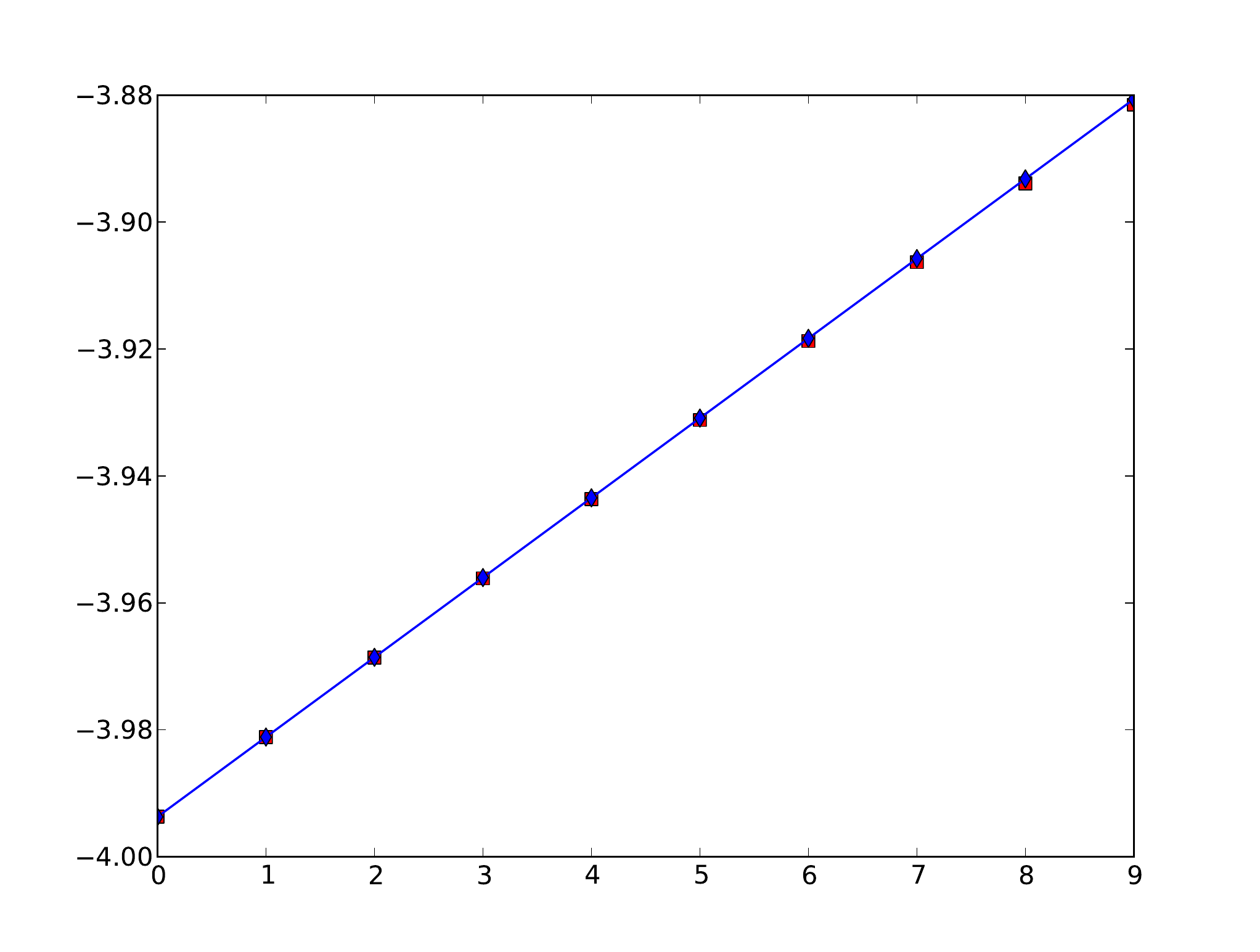} }
\caption{Eigenvalues in $[-4,-4 + 20\pi k^{-1}]$; in red squares, the eigenvalues of $A_{k}$ obtained numerically; in blue diamonds, the theoretical eigenvalues up to order $k^{-2}$ (\textit{i.e.} forgetting the $O(k^{-2})$ in formula (\ref{eq:DAtore}))}
\label{fig:zoom}
\end{figure}

\clearpage

\newpage

\appendix
\section{Appendix: proofs of the results of part \ref{subsection:ToepBarg}}

\begin{proof}[Proof of lemma \ref{lm:kernelToep}]
A first expression for this Schwartz kernel is
\begin{equation*} A_k(z_1,z_2) = \int_{\C} \Pi_k(z_1,z_3) a(z_3,k) \Pi_k(z_3,z_2) \ d\lambda(z_3) \end{equation*}
which is equal to
\begin{equation} \left( \frac{k}{2\pi} \right)^2 \int_{\C} \exp\left( -\frac{k}{2} \left( |z_1|^2 + |z_2|^2 + 2 |z_3|^2 - 2 z_1 \bar{z}_3 - 2 z_3 \bar{z}_2 \right)  \right) a(z_3,k) \ d\lambda(z_3). \label{eq:kernelholo}\end{equation}
This can be written
\begin{equation*} A_k(z_1,z_2) = \left( \frac{k}{2\pi} \right)^2 \int_{\C} \exp(ik\phi(z_1,z_2,z_3)) a(z_3,k) \ d\lambda(z_3) \end{equation*}
with the phase $\phi$ given by
\begin{equation*} \phi(z_1,z_2,z_3) = \frac{i}{2} \left( |z_1|^2 + |z_2|^2 + 2 |z_3|^2 - 2 z_1 \bar{z}_3 - 2 z_3 \bar{z}_2 \right). \end{equation*}
It is more convenient to use real variables: let $u_j = (x_j,\xi_j)$ be the point of $\R^2$ corresponding to $z_j = \frac{1}{\sqrt{2}} \left( x_j - i \xi_j \right) $. The phase $\phi$ reads
\begin{equation*} \phi(u_1,u_2,u_3) = \frac{i}{4} \left( \| u_1 - u_3 \|^2 + \| u_3 - u_2 \|^2 + 2i \omega_0(u_1 - u_2, u_3) \right). \end{equation*}
Using the identity
\begin{equation*}\| u_1 - u_3 \|^2 + \| u_3 - u_2 \|^2 = \frac{1}{2} \left( \| u_1 - u_2 \|^2 + \| 2 u_3 - u_1 - u_2 \|^2 \right),\end{equation*}
we can rewrite $\phi$ as
\begin{equation*} \phi(u_1,u_2,u_3) = \frac{i}{8} \| u_1 - u_2 \|^2 + \varphi(u_1,u_2,u_3) \end{equation*}
with
\begin{equation*} \varphi(u_1,u_2,u_3) = \frac{i}{4} \left( \frac{1}{2} \| 2 u_3 - u_1 - u_2 \|^2 + 2 i \omega_0(u_1 - u_2,u_3) \right) .\end{equation*} 
So we have $A_k(u_1,u_2) =  \exp\left( -\frac{k}{8}\|u_1 - u_2\|^2 \right) I_k(u_1,u_2)$ with
\begin{equation*} I_k(u_1,u_2) = \left( \frac{k}{2\pi} \right)^2 \int_{\R^2} \exp(ik\varphi(u_1,u_2,u_3)) a(u_3,k) \ d\lambda(u_3); \end{equation*}
a change of variable finally gives 
\begin{equation*} I_k(u_1,u_2) = \left( \frac{k}{2\pi} \right)^2 \int_{\R^2} \exp(ik\psi(u_1,u_2,u_3)) \ a\left(u_3 + \left(\frac{u_1 + u_2}{2}\right) ,k\right) \ d\lambda(u_3),\end{equation*}
with
\begin{equation*} \psi(u_1,u_2,u_3) = \frac{i}{2} \left( \| u_3 \|^2 + i \omega_0(u_1 - u_2,u_3) + i \omega_0(u_1,u_2) \right)  .\end{equation*}
To evaluate this integral, we cannot directly use the stationary phase lemma because $a(.,k)$ may not be compactly supported; we have to adapt it. In order to do so, we start by computing the critical locus
\begin{equation*} C_{\psi} = \left\{ (u_1,u_2,u_3) \in \C^3; d_{u_3}\psi(u_1,u_2,u_3) = 0 \ \text{and} \ \Im(\psi)(u_1,u_2,u_3) = 0 \right\} \end{equation*}
of $\psi$. It is clear that $\Im(\psi)(u_1,u_2,u_3) = 0$ if and only if $u_3 = 0$; moreover, the derivative of $\psi$ with respect to $u_3$ is given by
\begin{equation*} d_{u_3} \psi(u_1,u_2,u_3) = i \langle u_3,. \rangle - \frac{1}{2} \omega_0(u_1 - u_2,.), \end{equation*} 
hence $C_{\psi}$ is the set of $(u,u,0)$, $u \in \R^2$. Now, consider $\chi \in \classe{\infty}{(\R^2,\R^+)}$ equal to $1$ in the set $\left\{ \| x_3 \| \leq \delta \right\}$ for some $\delta > 0$ and with compact support, and decompose $I_k$ as $I_k = \left( \frac{k}{2\pi} \right)^2 (J_k + K_k)$, with 
\begin{equation*} J_k(u_1,u_2) =  \int_{\R^2} \exp(ik\psi(u_1,u_2,u_3)) \ a\left(u_3 + \left(\frac{u_1 + u_2}{2}\right) ,k\right) \chi(u_3) \ d\lambda(u_3) \end{equation*}
and $K_{k}(u_{1},u_{2})$ equal to the same integral replacing $\chi$ by $1-\chi$.
First, we show that $K_k$ is negligible. Choose $R > 0$ and consider the points $(u_1,u_2)$ belonging to the ball of $\R^4$ centered at the origin and of radius $R$. Writing the integral in $K_k$ in polar coordinates, we have
\begin{equation*} K_k(u_1,u_2) =  \int_{0}^{2\pi} \int_{\delta}^{+\infty} \exp\left( ik\Psi(\rho,\theta) \right) A(\rho,\theta,k)  \ d\rho \ d\theta \end{equation*}
where $\Psi(\rho,\theta) = \psi(u_1,u_2,(\rho \cos \theta,\rho \sin \theta))$ and 
\begin{equation*} A(\rho,\theta,k) =  \rho \ a\left((\rho \cos \theta, \rho \sin \theta) + \left(\frac{u_1 + u_2}{2}\right) ,k\right) (1 - \chi(\rho \cos \theta, \rho \sin \theta)). \end{equation*}
Performing successive integration by parts, it is easy to prove that for every positive integer $N$
\begin{equation*} K_k(u_1,u_2) = k^{-N} \int_{0}^{2\pi}  \int_{\delta}^{+\infty} \exp\left( ik\Psi(\rho,\theta) \right) D^N A(\rho,\theta,k) d\rho \ d\theta \end{equation*} 
where $D$ is the differential operator acting on $\classe{\infty}{([\delta,+\infty[ \times [0,2\pi] )}$ given by $Df = i \frac{\partial}{\partial \rho}\left( \left(\frac{\partial \Psi}{\partial \rho}\right)^{-1} f \right)$. Furthermore, using the facts that $\frac{\partial \Psi}{\partial \rho}(\rho,\theta) = i \rho - \frac{1}{2}(x_1-x_2) \sin \theta + \frac{1}{2} (y_1 - y_2) \cos \theta$, $D^N A$ is equal to a linear combination of terms of the form $\frac{\partial^{p} \Psi}{\partial \rho^{p}} \frac{\partial^{q} A}{\partial \rho^{q}}$ and $a(.,k)$ belongs to $\mathcal{S}_j^1$, we have the estimate 
\begin{equation*} \left| \int_{\delta}^{+\infty} \exp\left( ik\Psi(\rho,\theta) \right) D^N A(\rho,\theta,k) d\rho \right| \leq C_{N} \int_{\delta}^{+\infty} \exp\left(-\frac{k}{2}\rho^2\right) w(\rho,\theta) \ d\rho \end{equation*}
with
\begin{equation*} w(\rho,\theta) =  \rho^{j_1} \left( 1 + \left| \rho(\cos \theta, \sin \theta) + \frac{u_1 + u_2}{2} \right|^2 \right)^{j_2} \end{equation*}
for some $C_{N} > 0$ and $j_1,j_2 \in \Z$. If $j_2 < 0$, this last integral can be bounded by $\int_{\delta}^{+\infty} \rho^{j_1} \exp\left(-\frac{k}{2}\rho^2\right) d\rho = O(k^{-1/2})$; if $j_2 > 0$, we have
\begin{equation*}\left( 1 + \left| \rho(\cos \theta,\sin \theta) + \frac{u_1 + u_2}{2} \right|^2 \right)^{j_2} \leq \left( 1 + |\rho|^2 + |R|^2 \right)^{j_2} \end{equation*}
and hence
\begin{equation*} \int_{\delta}^{+\infty} \exp\left(-\frac{k}{2}\rho^2\right) w(\rho,\theta) \ d\rho \leq \tilde{C}_N \int_{\delta}^{+\infty} \exp\left(-\frac{k}{2}\rho^2\right) \rho^{j_3} \ d\rho = O(k^{-1/2}).\end{equation*}
This shows that $K_k \leq c_N k^{-N}$ on the ball of radius $R$ for some $c_N > 0$. We treat the derivatives of $K_k$ in the same way.

It remains to estimate $J_k$. Since the second derivative of $\psi$ with respect to $u_3$ is equal to $i \text{Id}$, and since
\begin{equation*}  \begin{split} \psi(u_1,u_2,u_3) & =  \frac{i}{8}  \| u_1 - u_2 \|^2  - \frac{1}{2} \omega_0(u_1,u_2) \\ & + \frac{1}{4} \left( i(\xi_1 - \xi_2) + 2 x_3 \right) \partial_{x_4}\psi(u_1,u_2,u_3) \\ &
 + \frac{1}{4} \left( i(x_2 - x_1) + 2 \xi_3 \right) \partial_{\xi_4}\psi(u_1,u_2,u_3), \end{split} \end{equation*}
the stationary phase lemma \cite[section $7.7$]{Hor} gives 
\begin{equation*} J_k(u_1,u_2) = \frac{2 \pi}{k} \exp\left(\frac{i}{8} \| u_1 - u_2 \|^2 - \frac{1}{2} \omega_{0}(u_{1},u_{2}) \right) \tilde{a}(u,u,k) + O(k^{-\infty}), \end{equation*}
where $\tilde{a}(.,.,k)$ belongs to $\mathcal{S}_j^2$; the coefficients $\tilde{a}_{\ell}(u_1,u_2,k)$ of its asymptotic expansion, which we do not write here, are linear combinations of derivatives of the $a_{m}, m \geq 0$, evaluated at $\frac{u_1 + u_2}{2}$. However, the values of $a(.,.,k)$ along the diagonal of $\C^2$ can be easily computed, because a number of terms vanish: for $z$ in $\C$, we have
\begin{equation*} \tilde{a}(z,z,k) = \left(\sum_{\ell \geq 0} \frac{k^{-\ell}}{\ell !} \Delta^{\ell} a \right)(z,k) = \left( \exp{k^{-1}\Delta} a \right)(z,k). \end{equation*}
Putting this together with the fact that $K_k$ is negligible, we obtain the result. 
\end{proof} 
\begin{proof}[Proof of lemma \ref{lm:covBarg}]
Formula (\ref{eq:kernelholo}) shows that the kernel $A_k$ is a holomorphic section of $L_0^k \boxtimes L_0^{-k}$; differentiating equation (\ref{eq:SchToep}), this implies that the sequences of functions $z \in \C \mapsto \frac{\partial \tilde{a}}{\partial \bar{z}_1}(z,z,k)$ and $z \in \C \mapsto \frac{\partial \tilde{a}}{\partial z_2}(z,z,k)$ are negligible. Hence, we have for $\ell \geq 0$ and $z \in \C$
\begin{equation*} \frac{\partial \tilde{a}_{\ell}}{\partial \bar{z}_1}(z,z) = 0 = \frac{\partial \tilde{a}_{\ell}}{\partial z_2}(z,z).\end{equation*} 
Thanks to these holomorphy conditions, the fact that $\tilde{a_{\ell}}$ vanishes on the diagonal implies that it vanishes to all order along the diagonal. We can easily adapt lemma 1 of \cite{Cha1} to show that this yields the negligibility of $\exp\left(-\frac{k}{4} \left| z_{1} - z_{2} \right|^2 \right) \tilde{a}(z_{1},z_{2},k)$. Injecting this in formula (\ref{eq:SchToep}) gives the result.
\end{proof}
\begin{proof}[Proof of corollary \ref{cor:compoToep}]
The kernel of $C_k$ reads
\begin{equation*} C_k(z_1,z_2) = \int_{C} A_k(z_1,z_3) B_k(z_3,z_2) \ d\lambda(z_3). \end{equation*}
Using the representations of $A_k$ and $B_k$ given by lemma \ref{lm:kernelToep}, this yields
\begin{equation*} \begin{split} C_k(z_1,z_2) = \left( \frac{k}{2\pi} \right)^2 \int_{\C} \exp\left( i k\phi(z_1,z_2,z_3)  \right) \tilde{a}(z_1,z_3,k) \tilde{b}(z_3,z_2,k) \ d\lambda(z_3) \\ + R_k \exp\left( -Ck |z_1 - z_2|^2\right), \end{split} \end{equation*}
with $C > 0$, $R_k$ negligible and
\begin{equation*} \phi(z_1,z_2,z_3) = \frac{i}{2} \left( |z_1|^2 + |z_2|^2 + 2 |z_3|^2 - 2 z_1 \bar{z}_3 - 2 z_3 \bar{z}_2 \right). \end{equation*}
Using the same technique as in the proof of lemma \ref{lm:kernelToep}, we show that 
\begin{equation*} \begin{split} C_k(z_1,z_2) = \frac{k}{2\pi} \exp\left( -\frac{k}{2} \left( |z_1|^2 + |z_2|^2 - 2 z_1 \bar{z}_2 \right) \right) \tilde{c}(z_1,z_2,k)  \\ + R'_k \exp\left(-C' k|z_1 - z_2|^2\right) \end{split} \end{equation*}
with $C' > 0$, $R'_k$ negligible, and $\tilde{c}(.,.,k) \in \mathcal{S}_{j + j'}^2$. Now, consider the function $\check{c}(.,k)$ defined by $\check{c}(z,k) = \tilde{c}(z,z,k)$ for $z$ in $\C$, and put $c(.,k) = \left( \exp\left(-{k^{-1}\Delta}\right) \check{c}\right)(.,k)$. Then $c(.,k)$ belongs to $\mathcal{S}_{j + j'}^1$ and, by lemma \ref{lm:kernelToep}, the Toeplitz operator $D_k = \text{Op}(c(.,k))$ admits a Schwartz kernel of the form 
\begin{equation*} \begin{split} D_k(z_1,z_2) = \frac{k}{2 \pi} \exp\left( -\frac{k}{2} \left( |z_1|^2 + |z_2|^2 - 2 z_1 \bar{z}_2 \right) \right) \tilde{d}(z_1,z_2,k)  \\ + R''_k \exp\left(-C''k|z_1 - z_2|^2\right), \end{split} \end{equation*}
where $\tilde{d}(.,.,k)$ belongs to $\mathcal{S}_{j}^2$, $R''_k$ is negligible, $C''$ is a positive constant and for every $z$ in $\C$, $\tilde{d}(z,z,k) = \tilde{c}(z,z,k)$. Lemma \ref{lm:covBarg} yields that $C_k = D_k + R'''_k \exp\left(-C'''k|z_1 - z_2|^2\right)$ for some $C''' > 0$ and $R'''_k$ negligible.

It remains to compute the contravariant symbol of $C_k$. For $z$ in $\C$, put $\breve{a}(z,w,k) = \tilde{a}(z,z + w,k)$ and $\breve{b}(z,w,k) = \tilde{b}(z + w,z,k)$. One has 
\begin{equation*} \check{c} = \left( \exp \left(k^{-1}\Delta_w \right) \breve{a} \breve{b} \right)_{|w=0} \end{equation*}
with $\Delta_w$ the holomorphic laplacian with respect to $w$; using lemma \ref{lm:covBarg}, we find  
\begin{equation*} \check{c}(z,k) = \left( \exp \left(k^{-1} \frac{\partial}{\partial \bar{u}} \frac{\partial}{\partial v} \right) \check{a}(u,k)) \check{b}(v,k) \right)_{|u = v = z} \end{equation*}
up to a negligible term. 
Now, since $c(.,k) = \left( \exp\left(-{k^{-1}\Delta}\right) \check{c}\right)(.,k)$, this yields
\begin{equation*} c(z,k) = \left( \exp\left( -k^{-1} \frac{\partial}{\partial u} \frac{\partial}{\partial \bar{v}}\right) a(u,k) b(v,k) \right)_{|u=v=z} \end{equation*}
up to a negligible term, which was to be proved.
\end{proof}

\section*{Acknowledgements}
I would like to thank San V\~u Ng\d{o}c and Laurent Charles for their precious help and numerous readings of the manuscript.

\bibliographystyle{plain}
\bibliography{BSell}

\end{document}